\newcommand{\diff}[2]{\mbox{{\rm Diff}{${\,}_{#1}({\mathbb C}^{#2},0)$}}}
\newcommand{\diffh}[2]{\mbox{$\widehat{\rm Diff}{{\,}_{#1}({\mathbb C}^{#2},0)}$}}
\newcommand{\cn}[1]{\mbox{(${\mathbb C}^{#1},0$)}}
\newtheorem{pro}{Proposition}[section]
\newtheorem{teo}{Theorem}[section]
\newtheorem{cor}{Corollary}[section]
\newtheorem{lem}{Lemma}[section]
\theoremstyle{remark}
\newtheorem{rem}{Remark}[section]
\theoremstyle{definition}
\newtheorem{defi}{Definition}[section]
\theoremstyle{plain}
\newtheorem{teol}{Theorem}
\begin{document}

\title[]
{Recurrent orbits of subgroups of local complex analytic diffeomorphisms}

\author{Javier Rib\'{o}n}
\address{Instituto de Matem\'{a}tica, UFF, Rua M\'{a}rio Santos Braga S/N
Valonguinho, Niter\'{o}i, Rio de Janeiro, Brasil 24020-140}
\thanks{e-mail address: javier@mat.uff.br}
\thanks{MSC class. Primary: 37F75, 20F16; Secondary: 20F14, 32H50}
\thanks{Keywords: local diffeomorphism, recurrent orbits, solvable group}
\maketitle

\bibliographystyle{plain}

\section*{Abstract}
We show recurrent phenomena for orbits of groups of local complex analytic diffeomorphisms
that have a certain subgroup or image by a morphism of groups that is non-virtually solvable.
In particular we prove that a non-virtually solvable subgroup of local biholomorphisms has
always recurrent orbits, i.e. there exists an orbit contained in its set of limit points.
\section{Introduction}
We are interested in the interaction between algebraic properties of
groups of local complex analytic diffeomorphisms and their topological dynamics.
We denote by $\diff{}{n}$ the group of local complex analytic diffeomorphisms
defined in a neighborhood of the origin of ${\mathbb C}^{n}$.
The algebraic nature of subgroups of $\diff{}{n}$ is studied from different points of view
in the literature,
for instance in the context of groups of real analytic diffeomorphisms in compact manifolds
\cite[Ghys]{Ghys-identite}, the existence of faithful analytic actions of mapping
class groups of surfaces on surfaces \cite[Cantat-Cerveau]{Cantat-Cerveau},
the study of integrability properties of one-dimensional foliations
\cite[Rebelo-Reis]{RR:note} \cite[C\^{a}mara-Scardua]{Ca-Sca:finite},
local intersection dynamics \cite[Seigal-Yakovenko]{Seigal-Yakovenko:ldi}
\cite[Binyamini]{Binyamini:finite},
the study of the derived length \cite[Martelo-Rib\'{o}n]{JR:arxivdl}
\cite{JR:arxivdl2}...

This paper generalizes two results of Rebelo and Reis \cite{RR:arxiv}
(Theorems \ref{teo:rec} and \ref{teo:lcoivs})
that relate algebraic properties of a subgroup $G$ of $\diff{}{2}$
(being non-virtually solvable) with the existence of non-discrete orbits
for the action of $G$ or more rigorously for the action of any  pseudogroup
that is a representative of $G$.
The biggest generalization is that our results hold for every dimension.

Let us introduce the main results in the paper. Theorem \ref{teo:rec} provides 
a series of sufficient conditions for the non-discreteness of a subgroup of 
$\diff{}{n}$.
\begin{teol}
\label{teo:rec}
Let $G$ be a subgroup of $\mathrm{Diff} ({\mathbb C}^{n},0)$
such that at least one of the following conditions holds:
\begin{enumerate}
\item $G$ is a non-solvable unipotent subgroup of
$\mathrm{Diff} ({\mathbb C}^{n},0)$.
\item $G$ is a non-virtually solvable subgroup of $\mathrm{Diff} ({\mathbb C}^{n},0)$
consisting of non-hyperbolic elements.
\item $\overline{(j^{1} G)}_{0}$ is a non-solvable subgroup of $\mathrm{GL}(n,{\mathbb C})$.
\item $j^{1} G$ is not virtually reducible and $\overline{(j^{1} G)}_{0} \not \subset {\mathbb C}^{*} Id$.
\item $j^{1} G$ is not virtually reducible and the group induced by
$\overline{j^{1} G}$ in $\mathrm{PGL}(n,{\mathbb C})$ is non-discrete.
\end{enumerate}
Then there exists a finite subset ${\mathcal S}$ of $G$ such that
for every pseudogroup ${\mathcal P}$ that is a representative of $\langle {\mathcal S} \rangle$
there exist a connected open neighborhood $V$ of $0$  and a sequence
$(f_{n}:V \to f_{n}(V))_{n \geq 1}$ in ${\mathcal P} \setminus \{ Id_{V} \}$ converging uniformly to
$Id$ in $V$.
In particular all the points in
$V$ except at most a countable union of proper analytic sets are
recurrent for the action of ${\mathcal P}$.
\end{teol}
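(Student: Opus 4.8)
The plan is to reduce every one of the five hypotheses to a single uniform conclusion: the existence of a finite set $\mathcal{S}\subset G$ such that, for each pseudogroup representative $\mathcal{P}$ of $\langle\mathcal{S}\rangle$, there are a connected neighborhood $V$ of $0$ and a sequence $(f_{n})$ in $\mathcal{P}\setminus\{\mathrm{Id}_{V}\}$ with $f_{n}\to\mathrm{Id}$ uniformly on $V$. The final assertion then follows formally: for $p\in V$, if $f_{n}(p)\neq p$ for infinitely many $n$, then $f_{n}(p)\to p$ exhibits $p$ as a limit of orbit points distinct from $p$, so $p$ is recurrent; hence the non-recurrent points lie in $\bigcup_{n}\mathrm{Fix}(f_{n})$, and since each $f_{n}\neq\mathrm{Id}$ is analytic, every $\mathrm{Fix}(f_{n})$ is a proper analytic subset of $V$, giving the claimed countable union of proper analytic sets.

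The core mechanism producing $(f_{n})$ is the filtration of $\mathrm{Diff}({\mathbb C}^{n},0)$ by order of tangency to the identity. Writing $\nu(\varphi)$ for the order to which $\varphi$ is tangent to $\mathrm{Id}$, one has $\nu([\varphi,\psi])>\max(\nu(\varphi),\nu(\psi))$, so iterated commutators among tangent-to-identity elements have strictly increasing tangency order. On a pseudogroup defined over a fixed polydisc, Cauchy estimates convert high tangency order into uniform smallness: an element tangent to $\mathrm{Id}$ to order $k$ on a polydisc of radius $r$ is $O(r^{k})$-close to $\mathrm{Id}$ there. Thus a sequence of \emph{nontrivial} elements of ever higher tangency order, all defined on a common $V$, converges uniformly to $\mathrm{Id}$. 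This settles the unipotent non-solvable case (1): a unipotent matrix group is nilpotent, so $j^{1}G$ is solvable, whence the kernel of $j^{1}$ on $G$ — consisting of elements tangent to the identity — is non-solvable; its derived series never terminates, producing nontrivial iterated commutators of unbounded tangency order, which are the desired $f_{n}$.

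For the linear hypotheses (3)–(5) the first task is to witness the relevant algebraic property by finitely many elements: non-solvability, non-virtual reducibility, or non-discreteness in $\mathrm{PGL}(n,{\mathbb C})$ of the closure of $j^{1}G$ is detected by a finite $\mathcal{S}\subset G$ whose linear parts generate a group with the same property. In case (3) the closure of $j^{1}\langle\mathcal{S}\rangle$ is then a positive-dimensional Lie subgroup of $\mathrm{GL}(n,{\mathbb C})$, so $j^{1}\langle\mathcal{S}\rangle$ is non-discrete and admits $A_{n}\to\mathrm{Id}$ with $A_{n}\neq\mathrm{Id}$; cases (4) and (5) likewise yield either genuine $\mathrm{GL}$-non-discreteness or sequences of scalar type $A_{n}=\lambda_{n}(\mathrm{Id}+o(1))$ with $|\lambda_{n}|\neq 1$, i.e. contracting directions. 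The remaining point is to promote convergence of linear parts to convergence of full germs, and here one splits according to the available dynamics: when contracting directions are present, one conjugates a fixed nontrivial tangent-to-identity element — produced by the commutator mechanism from non-commutativity of $\mathcal{S}$ — by elements whose linear parts contract, pushing it uniformly towards $\mathrm{Id}$; when no hyperbolicity is available, which is exactly the regime of the non-hyperbolic case (2), one invokes the Tits alternative to locate a free, hence non-virtually-solvable, subgroup and feeds it into the tangency–commutator machinery above, reducing (2) to the unipotent situation after controlling the elliptic linear parts.

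The principal obstacle is uniformity over pseudogroup representatives: the conclusion must hold for \emph{every} representative $\mathcal{P}$ of $\langle\mathcal{S}\rangle$, so one cannot argue merely at the level of germs or formal series. The difficulty is to guarantee that the infinitely many words in $\mathcal{S}$ realizing the commutators and conjugations are all defined on one common neighborhood $V$ and satisfy uniform Cauchy bounds there, while simultaneously ensuring that the algebraic non-degeneracy survives the constructions, so that the resulting germs are genuinely distinct from $\mathrm{Id}$. Controlling the shrinking of domains under composition without destroying the tangency estimates is the technical heart of the argument.
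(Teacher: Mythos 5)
Your overall skeleton (find a finite set with linear parts close to $\mathrm{Id}$, get nontrivial words converging uniformly to $\mathrm{Id}$, deduce recurrence off a countable union of analytic fixed-point sets) matches the paper, and your final recurrence deduction is fine. But both load-bearing steps have genuine gaps.

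The core gap is in case (1). Non-solvability of $G^{1}=G\cap \mathrm{Diff}_{1}({\mathbb C}^{n},0)$ gives you nontrivial elements of each derived group $(G^{1})^{(j)}$, and indeed these have tangency order at least $2^{j}$; but they are arbitrary \emph{products} of iterated commutators, i.e.\ words of uncontrolled length in a pseudogroup, so they have no common domain of definition and no uniform bound. Your ``Cauchy estimates convert high tangency order into uniform smallness'' needs both: a germ tangent to $\mathrm{Id}$ to order $k$ is $O(r^{k})$-small on ${\mathbb B}_{r}$ only relative to a bound for it on a fixed larger ball, and for long words in a fixed pseudogroup representative neither the domain nor the bound is available. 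The only known mechanism that controls domains and norms is the Zassenhaus/Loray--Rebelo estimate, and it controls exactly one kind of word: the specific iterated commutators in the sets ${\mathcal S}_{p}(j)$. So what you actually need is that these \emph{specific} words are nontrivial at every level, i.e.\ that $G$ is not $p$-pseudo-solvable. Since the commutator map is not a homomorphism, nontriviality of $(G^{1})^{(j)}$ does \emph{not} imply nontriviality of ${\mathcal S}_{p}(j)$: this implication (solvable $\Leftrightarrow$ $p$-pseudo-solvable for unipotent groups, $p\geq C^{n}$) is precisely the paper's Theorem \ref{teo:psis}, whose proof takes up the pro-algebraic closure and extension-Lie-algebra machinery of Section 4. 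Your proposal has no counterpart to this; it is the hard half of the theorem, not a gloss.

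Cases (2)--(5) have a parallel gap. The paper's route is to produce two elements of $G$ whose linear parts are \emph{free generators} of a free group \emph{and} arbitrarily close to $\mathrm{Id}$: freeness forces every word in the Zassenhaus scheme to be nontrivial (Lemma \ref{lem:free}), closeness to $\mathrm{Id}$ plus a rescaling makes Proposition \ref{pro:estl} applicable, and producing such generators is the content of Theorems \ref{teo:nonhyp}, \ref{teo:nsolcc}, \ref{teo:ccind}, \ref{teo:nvrnd} (Burnside plus relative compactness and semisimplicity in case (2), Breuillard--Gelander in cases (3)--(5), plus the block-diagonal conjugation and the commutator-localization trick). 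Your sketch replaces this with two unjustified moves: ``conjugating a fixed tangent-to-identity element by elements whose linear parts contract'' (in case (2) there are no contracting elements at all, by hypothesis), and ``reducing (2) to the unipotent situation'' (non-hyperbolic elements are elliptic, not unipotent, so Kolchin and the case-(1) machinery do not apply; this is exactly the difficulty your phrase ``after controlling the elliptic linear parts'' names without resolving). In short, you correctly identified the uniformity over pseudogroup representatives as the technical heart, but the proposal leaves both that heart and its algebraic prerequisite (pseudo-solvability, respectively free generators near $\mathrm{Id}$) unproved.
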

Let us explain the previous theorem.
We say that a local diffeomorphism $\phi \in \diff{}{n}$ is {\it unipotent} if its linear part
$D_{0} \phi$ at $0$ is a unipotent linear automorphism of ${\mathbb C}^{n}$.
We say that a subgroup $G$ of $\diff{}{n}$ is unipotent if all its elements are unipotent.
A unipotent group of local diffeomorphisms
is ``well-represented" by a Lie algebra of nilpotent formal vector fields.
As a consequence problems in unipotent subgroups of $\diff{}{n}$ can 
be interpreted as simpler problems in their Lie algebras. 

Condition (2) requires that $G$ has no finite index solvable subgroup.
Moreover the definition of hyperbolic local diffeomorphism is not the most
common one, we say that $\phi \in \diff{}{n}$ is hyperbolic if 
$\mathrm{spec} (D_{0} \phi)$ is not contained in ${\mathbb S}^{1}$.
Regarding Condition (3), $\overline{j^{1} G}$ is the closure of
$j^{1} G := \{ D_{0} \phi : \phi \in G \}$
in $\mathrm{GL}(n,{\mathbb C})$
in the usual topology. It is well-known that $\overline{j^{1} G}$ is a Lie group
and Condition (3) is imposed in its connected component $\overline{(j^{1} G)}_{0}$
of $Id$. Conditions (4) and (5) will be discussed later on.

The thesis of the theorem is that we have a finite subset $\{g_{1},\ldots,g_{p}\}$ in
$G$ such that given any choice $\phi_{j} : U_{j} \to V_{j}$ of biholomorphism
such that the germ of $\phi_{j}$ at $0$ is equal to $g_{j}$ for $1 \leq j \leq p$,
we obtain the family $(f_{n})_{n \geq 1}$ by considering certain words on the symbols
$\{\phi_{1},\ldots,\phi_{p}, \phi_{1}^{-1},\ldots, \phi_{p}^{-1}\}$.
The biholomorphism
$f_{n}$ is defined in a domain that depends on the chosen word
but anyway such domain contains $V$ for any $n \in {\mathbb N}$.
More precisely, the words providing the elements in the sequence $(f_{n})_{n \geq 1}$
are commutators obtained by applying the so called Zassenhaus lemma that 
will be discussed later on.
This strategy was applied in dimension $2$ by Rebelo and Reis \cite{RR:arxiv}. 
We generalize it to any dimension.

Conditions (1) and (2) in Theorem \ref{teo:rec}
can be weakened  by checking them out on all subgroups of $G$.
For instance Theorem \ref{teo:rec} holds if we replace Condition (1)
with the existence of a non-solvable unipotent subgroup of $G$.

In order to introduce Theorem \ref{teo:lcoivs} 
let us define the discrete orbits property.
Consider a pseudogroup ${\mathcal P}$ of holomorphic maps, 
i.e. a family $(f_{j} : U_{j} \to V_{j})_{j \in J}$ of biholomorphisms
closed by compositions, inverses, restrictions and patching (cf. Definition \ref{def:pseudog}).
We say that ${\mathcal P}$ is a representative of a subgroup $G$ of $\diff{}{n}$ 
if $0 \in U_{j} \cap V_{j}$, $U_{j} \cup V_{j} \subset {\mathbb C}^{n}$,
$f_{j}(0)=0$ for any $j \in J$ and $G$ is the group of germs at $0$ of elements of $(f_{j})_{j \in J}$.
We say that $G$ has discrete orbits (locally discrete orbits in
\cite{RR:arxiv}) if there exists a representative pseudogroup
$(f_{j} : U_{j} \to V_{j})_{j \in J}$ of $G$  such that all orbits
${\mathcal O}(P) = \{ f_{j}(P) : P \in U_{j} \}$ are discrete sets.
\begin{teol}
\label{teo:lcoivs}
Let $G$ be a subgroup of $\mathrm{Diff}({\mathbb C}^{n},0)$ with discrete orbits.
Then $G$ is virtually solvable.
\end{teol}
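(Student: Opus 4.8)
The plan is to prove the contrapositive: if $G$ is not virtually solvable, then $G$ does not have discrete orbits. The bridge to Theorem \ref{teo:rec} is the observation that discreteness of orbits passes to subgroups. Indeed, if $(f_j:U_j\to V_j)_{j\in J}$ is a representative of $G$ with all orbits discrete and $\mathcal S\subseteq G$ is finite, then the sub-pseudogroup generated by those $f_j$ whose germ lies in $\mathcal S$ is a representative of $\langle\mathcal S\rangle$ whose orbits are contained in the orbits of $G$, hence discrete. Consequently, \emph{whenever $G$ or any subgroup of $G$ satisfies one of the hypotheses $(1)$--$(5)$, Theorem \ref{teo:rec} produces recurrent points for some representative of a finitely generated subgroup, and $G$ cannot have discrete orbits}. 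So it suffices to show that a non virtually solvable $G$ always triggers one of these conditions, apart from a residual linear situation that I will treat by hand.

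First I would reduce to the linear part $H:=j^1 G$. Let $K=\{g\in G: D_0 g=\mathrm{Id}\}$ be the kernel of $j^1$. Every element of $K$ is unipotent, so $K$ is a unipotent subgroup of $G$; were it non-solvable, condition $(1)$ would apply to $K$ and Theorem \ref{teo:rec} would already contradict the discreteness of the orbits. Hence $K$ is solvable. Since $G/K\cong H$, and an extension of a solvable group by a virtually solvable group is virtually solvable, $G$ is virtually solvable as soon as $H$ is. The whole statement therefore reduces to the \emph{linear claim}: if $G$ has discrete orbits then $H=j^1 G$ is virtually solvable.

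To prove the linear claim I argue by contradiction, assuming $H$ is not virtually solvable, and analyse the Lie group $\overline{H}$. If $\overline{(j^1 G)}_0$ is non-solvable, condition $(3)$ fires. Otherwise $\overline{(j^1 G)}_0$ is solvable and I split according to reducibility. If $H$ is not virtually reducible, then either $\overline{(j^1 G)}_0\not\subseteq\mathbb{C}^* Id$, giving condition $(4)$, or $\overline{(j^1 G)}_0\subseteq\mathbb{C}^* Id$, in which case condition $(5)$ settles the subcase where the image of $\overline{H}$ in $\mathrm{PGL}(n,\mathbb{C})$ is non-discrete, leaving only the subcase of discrete image. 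The complementary possibilities are thus the virtually reducible case and the strongly irreducible, lattice-like case in which $\overline{H}$ is the product of the central scalars with a group that is discrete modulo $\mathbb{C}^* Id$. Throughout, since $H$ non virtually solvable forces $G$ non virtually solvable, negating condition $(2)$ also lets me record that $G$ contains a \emph{hyperbolic} element $\phi_0$, i.e. one with $\mathrm{spec}(D_0\phi_0)\not\subset\mathbb{S}^1$.

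The virtually reducible case I would handle by induction on $n$: passing to a finite-index subgroup preserving a proper subspace $0\neq W\subsetneq\mathbb{C}^n$ and to the induced actions on $W$ and on $\mathbb{C}^n/W$, non virtual solvability of $H$ forces a strongly irreducible subquotient that is again non virtually solvable in dimension $<n$, and reassembling via the extension fact closes the induction. The hard part is the remaining lattice-like case, where Theorem \ref{teo:rec} is silent: the linear parts form a group that is discrete modulo scalars, so the Zassenhaus--commutator mechanism behind Theorem \ref{teo:rec} yields no sequence tending to $\mathrm{Id}$. Here I expect to produce orbit accumulation \emph{directly} from dynamics, using the contraction supplied by the hyperbolic element $\phi_0$ toward a $\phi_0$-invariant subspace together with the density of the $H$-action on $\mathbb{P}^{n-1}(\mathbb{C})$ coming from strong irreducibility and non-compactness. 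The delicate point will be to transport this accumulation from the linear model to the genuine nonlinear germs while controlling the higher-order terms, and to make the transfer of non-solvability across subquotients in the reducible induction interact correctly with the germ dynamics.
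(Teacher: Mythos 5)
Your reduction to the linear part is sound and matches the paper: if $G\cap \mathrm{Diff}_{1}({\mathbb C}^{n},0)$ is non-solvable, Theorem \ref{teo:rec}(1) kills the discrete orbits property, and the extension argument correctly shows it suffices to prove that $j^{1}G$ is virtually solvable. The problem is that everything after that point — precisely the part where the theorem stops being a corollary of Theorem \ref{teo:rec} — is left unresolved, and the two ideas you sketch for the residual cases both fail. For the virtually reducible case, your induction on $n$ cannot be closed: the invariant subspace $W$ is invariant only for the linear parts, not for the nonlinear germs, so there is no group of germs in dimension $<n$ acting on $W$ or on ${\mathbb C}^{n}/W$ whose orbits relate to those of $G$; the discrete-orbits hypothesis simply does not descend to linear subquotients, and you acknowledge but do not repair this. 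For the ``lattice-like'' case, your plan rests on ``density of the $H$-action on ${\mathbb P}^{n-1}({\mathbb C})$ coming from strong irreducibility and non-compactness,'' which is false: the paper's own introductory example — a free Kleinian subgroup of $\mathrm{SL}(2,{\mathbb C})$ whose limit set $\Lambda$ is proper — is strongly irreducible, non-compact, discrete modulo scalars, and its projective orbits are dense only in $\Lambda$; most of its linear orbits are discrete. Recurrence in this situation can only be located at carefully chosen points, not extracted from a global density statement.

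The paper's actual proof avoids your case analysis altogether and supplies exactly the machinery your sketch is missing. Proposition \ref{pro:irr} extracts a subgroup $J$ of $j^{1}G$ that is non-virtually solvable, has Zariski-connected irreducible Zariski closure, and is the derived group of a group $K$ with the same Zariski closure; one then works with $\tilde{G}=\{\phi \in G : j^{1}\phi \in J\}$. Either every element of $J$ has spectrum in ${\mathbb S}^{1}$ — and Theorem \ref{teo:rec}(2) applies to $\tilde{G}$ — or some $\phi \in \tilde{G}$ has a contracting eigenvalue. In the latter case one takes a compact fundamental domain $D^{s}$ in the stable manifold $W_{\phi}^{s}\setminus\{0\}$; Proposition \ref{pro:exit2} (which uses $J=K^{(1)}$, hence $\det(B_{|V})=1$ on $J$-invariant subspaces, and the Zariski-irreducibility of $\overline{J}^{z}$) produces $\psi \in \tilde{G}$ whose linear part moves the limiting stable direction of $\phi^{p_{k}}(q)$ off $[V_{A}^{s}]\cup[V_{A}^{cu}]$, so that $\phi^{-m_{k}}\circ\psi\circ\phi^{p_{k}}(q)$ returns to $D^{s}$ along infinitely many distinct orbit points. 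Note that this requires the hyperbolic element and the ``moving'' element to have linear parts in $J$; an arbitrary hyperbolic $\phi_{0}\in G$, as in your plan, gives no control on whether any group element moves the limit direction out of the invariant cones. Finally — and this step has no counterpart in your proposal — accumulation of the orbit of $q$ near \emph{other} points of $D^{s}$ is not yet recurrence; the paper's Lemma \ref{lem:funct} (a Zorn's lemma argument on compact subsets of $D^{s}$ invariant under the accumulation map $\tau$) is what converts ``every point of $D^{s}$ has orbit accumulation in $D^{s}$'' into the existence of a point $q'$ with $q' \in \tau(q')$, i.e.\ a genuinely recurrent point contradicting discreteness of orbits.
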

Given a non-virtually solvable subgroup $G$ of $\diff{}{n}$ Theorem \ref{teo:lcoivs} provides
an orbit ${\mathcal O}$ without isolated points for every choice of a representative
pseudogroup of $G$, i.e. for every choice of domains of definition of the elements of $G$.
All points of ${\mathcal O}$ are recurrent.
Some parts of the proof are inspired in analogue results of \cite{RR:arxiv} even if there
are specific issues that are associated to the higher dimensional setting.
%
%
%
%

Theorems \ref{teo:rec} and \ref{teo:lcoivs} were known in dimension $1$ as a consequence of results of
Shcherbakov \cite{Shcherbakov(density)} and Nakai \cite{Nakai-nonsolvable}.
Indeed given a non-solvable subgroup $G$ of $\diff{}{}$
there exists a real analytic curve $\Sigma$ such that
any representative ${\mathcal P}$ of $G$ has dense orbits
in the connected components of $U \setminus \Sigma$ for some
open neighborhood $U$ of $0$ \cite{Nakai-nonsolvable}.
Moreover there exist real flows of non-trivial holomorphic vector
fields in the topological closure of any representative pseudogroup of $G$.
Theorems \ref{teo:rec} and  \ref{teo:lcoivs} are not as precise, the density condition is replaced with
the existence of recurrent orbits. 
Anyway, it is not clear how to generalize the density results to the higher dimensional setting. 
For example, consider the group $H= \{ \phi \in \diff{}{n} : f \circ \phi \equiv f \}$ for some germ 
$f$ of non-constant holomorphic function defined in the neighborhood of $0$ in 
${\mathbb C}^{n}$. 
The group is non-solvable for $n>1$ since its Lie algebra 
(the set of formal singular vector fields that have $f$ as first integral) is non-solvable \cite{JR:arxivdl}.
Analogously the subgroup of $H$ of its tangent to the identity elements is non-solvable.
Hence there are finitely generated subgroups of $H$ that are non-solvable
and consist only of tangent to the identity elements. The orbit of any point by the 
action of such a group $J$ is contained in a level set of $f$ and thus has empty interior.
In spite of this, Theorem \ref{teo:rec}(1) applies to $J$ providing recurrent points.

Every instance of Theorem \ref{teo:rec} is related to the non-solvability of the
connected component of $Id$ of a certain group. In items (1) and (2) the conditions
are equivalent to the non-solvability of $G \cap \overline{G}^{z}_{0}$. 
Let us remark that $\overline{G}^{z}$ is the Zariski-closure of $G$ (cf. section \ref{sec:proalg})
and is obtained as a projective limit of algebraic matrix groups. The group 
$\overline{G}^{z}_{0}$ is its connected component of $Id$, that is generated by 
the exponential of the Lie algebra of $\overline{G}^{z}$ (cf.  \cite{JR:arxivdl}).
The condition in item (3) applies to $j^{1} G$ and it is equivalent to the non-solvability
of $ j^{1} G \cap \overline{(j^{1} G)}_{0}$. Hence the conditions in items (1), (2) and (3)
are analogous once we consider the proper definition of Zariski-closure for subgroups
of local diffeomorphisms.
Conditions (4) and (5)
imply Condition (3) and as a consequence all the hypotheses are part of a common framework.

Let us introduce an example illustrating the previous discussion. 
Consider a subgroup $G$ of $\mathrm{SL}(2,{\mathbb C})$ that induces a non-abelian free
Kleinian subgroup of $\mathrm{PSL}(2,{\mathbb C})$ whose limit set $\Lambda$ is not
the whole Riemann sphere. We can even suppose that $G$ is free.
 Let $U$ be the dense open set consisting of 
points whose directions belong to $\hat{\mathbb C} \setminus \Lambda$.
The orbits through points in $U$ are discrete. Therefore $G$ is highly non-trivial
(it is free!) but all the connected components of $Id$ associated to subgroups 
of $G$ in items (1)-(5) of Theorem  \ref{teo:rec} are solvable.

%
%
 
 In a slightly different point of view
the paper can be interpreted as a study on how non-virtual solvability
impacts the dynamics of a subgroup of $\diff{}{n}$.
Indeed the hypotheses
in Theorem \ref{teo:rec} imply that some group canonically associated to $G$ is not virtually solvable.
This is obvious for Condition (2). Since Conditions (4) and (5) imply Condition (3), it 
suffices to show that 
we can replace non-solvable with non-virtually solvable in Conditions (1) and
(3) of Theorem \ref{teo:rec}. The group $\overline{(j^{1} G)}_{0}$ is a connected Lie group and
hence it is solvable if and only if is virtually solvable.
Analogously every virtually solvable unipotent subgroup of $\diff{}{n}$ is solvable.
 Non-virtually solvable groups are part of the dichotomy provided by the Tits alternative:
given a field $F$ of characteristic $0$
every subgroup of the group $\mathrm{GL}(n,F)$ is either virtually solvable or contains
a non-abelian free subgroup \cite{Tits}.

Unfortunately the Tits alternative does not hold a priori for subgroups of $\diff{}{n}$.
Anyway the elements of the sequence $(f_{n})_{n \geq 1}$ in Theorem \ref{teo:rec} are obtained by 
considering certain
commutators of finitely many elements whose linear parts are close to $Id$.
This process is specially simple for a non-abelian free group.
Hence the proof of Theorem \ref{teo:rec} is easier in a setting in which the Tits alternative holds.
The cases (3), (4) and (5) in Theorem \ref{teo:rec} are of Tits type, meaning
that there exists 
a non-abelian free subgroup of $j^{1} G$ on two generators whose linear
parts can be chosen arbitrarily close to $Id$ and hence we can apply
the commutator process. As a consequence we can suppose in these cases that
${\mathcal S}$ has two elements. The localization of generators of free
groups in a neighborhood of the identity profits of results of Breuillard
and Gelander on the topological Tits alternative \cite{Breuillard-Gelander:Tits}.
%
%
%
%
\subsection{Pseudo-solvable groups}
Let us explain how to deal with Condition (1) in Theorem \ref{teo:rec} where we can not use
the Tits alternative. More precisely, we can not guarantee that the hypothesis implies
the existence of a non-abelian free subgroup of $G$. We will use the notion of
pseudo-solvable subgroup of $\diff{}{n}$ as a workaround for this issue.

The definition of pseudo-solvable group was introduced by Ghys in \cite{Ghys-identite}.
It is intended to identify whether or not a group is solvable by analyzing a finite generator set
in specific cases, for instance in geometrical problems.
Ghys uses this concept to construct sequences of non-trivial real analytic diffeomorphisms in
real analytic manifolds that converge uniformly to the identity map.
He explains that the definition of pseudo-solvable is arbitrary and many others
are possible \cite{Ghys-identite}[p. 171].
We use in this paper one of the possible alternative definitions.
More precisely we introduce the definition of $p$-pseudo-solvable group
for $p \in {\mathbb N} \cup \{0\}$ generalizing Ghys definition that corresponds to
$1$-pseudo-solvable.
Our definition is intended to
take profit of the particular
algebraic structure of solvable subgroups of $\mathrm{Diff}({\mathbb C}^{n},0)$.
\begin{defi}
Let $G$ be a finitely generated group. Consider a finite set ${\mathcal S}$ of
generators of $G$. Fix $p \in {\mathbb Z}_{\geq 0}$.
By recurrence we define ${\mathcal S}_{p}(0)={\mathcal S}$ and
\[ {\mathcal S}_{p}(j+1) = \{ [f,g] \ \mathrm{or} \ [g,f] ; f \in {\mathcal S}_{p}(j), \ g \in \cup_{k= j-p}^{j} {\mathcal S}_{p}(k)
\} . \]
We say that $G$ is $p$-{\it pseudo-solvable} for ${\mathcal S}$ if there exists
$j \in {\mathbb N}$ such that ${\mathcal S}_{p}(j)=\{1\}$.
We say that $G$ is $p$-{\it pseudo-solvable} if it is $p$-pseudo-solvable for some
finite generator set. We drop the subindex $p$ when it is implicit.
\end{defi}
Let us clarify some points in the previous definition. It could happen that $j-p <0$;
in such a case we consider ${\mathcal S}(k)= \{1\}$ for any $k < 0$.
We always consider that given $f \in {\mathcal S}(0)$ its inverse $f^{-1}$ also belongs
to ${\mathcal S}(0)$. Moreover since $[f,g]^{-1} = [g,f]$ this condition holds for
${\mathcal S}(j)$ for every $j \geq 0$.

The Zassenhaus lemma implies that if ${\mathcal S}$
consists of elements close to $Id$ then the elements in ${\mathcal S}_{p}(k)$ tend uniformly
to $Id$ when $k \to \infty$. This idea was introduced by Ghys for
the study of groups of real analytic diffeomorphisms of compact manifolds
\cite{Ghys-identite} and was adapted to the local setting by Loray and Rebelo \cite{LorReb}
(cf. also \cite{RR:arxiv}).
Therefore non-$p$-pseudo-solvable groups with generators close to $Id$ provide
sequences of non-trivial elements converging to the identity map.
The condition of being close to the identity map is somehow automatic
for the elements of a unipotent subgroup of $\diff{}{n}$ and as a consequence
in order to prove Theorem \ref{teo:rec} in Case (1)
it suffices to show that $G$ is not $p$-pseudo-solvable for some $p \in {\mathbb Z}_{\geq 0}$.
We complete the proof of Theorem \ref{teo:rec} by showing that
there exists $p=p(n)$ such that a unipotent subgroup $G$ of $\diff{}{n}$
is solvable if and only if is $p$-pseudo-solvable (Theorem \ref{teo:psis}).
 Let us remark that Rebelo and Reis show 
in their proof of the analogue of Theorem \ref{teo:rec}(1) for dimension $2$
that $1$-pseudo-solvable implies solvable for $n=2$ \cite{RR:arxiv}.
We show a weaker property, easier to prove and that anyway suffices to 
prove Theorems \ref{teo:rec} and \ref{teo:lcoivs}.

The definition of pseudo-solvable is useful in a class of groups if
pseudo-solvable is equivalent to solvable; otherwise it is a property
that depends on the choice of generators of the group and with no
clear algebraic or geometrical meaning.
The main drawback of the definition of pseudo-solvable is that the proof of
this equivalence is in general quite technical.
Let us explain how working with
$p$-pseudo-solvability simplifies the proof.

Let $G$ be a unipotent  subgroup of $\diff{}{n}$, for example a group
of tangent to the identity diffeomorphisms.
We want to show that if a finitely generated subgroup of $G$ is $p$-pseudo-solvable,
for some $p = p(n)$ to be determined later on, then $G$ is solvable.
Let $m \in {\mathbb N}$ be the minimum integer such that ${\mathcal S}(m)=\{Id\}$.
We define ${\mathcal S}(j,k) = \cup_{l=j}^{k} {\mathcal S}(l)$ for $0 \leq j \leq k$,
$\Gamma (j)= \langle {\mathcal S}(j,m) \rangle$
and $G(j,j+p)=  \langle {\mathcal S}(j,j+p) \rangle$.
It is clear that $\Gamma (m)$ is solvable and the proof relies on proving that
if $\Gamma (j+1)$ is solvable then $\Gamma (j)$ is solvable for any $0 \leq j < m$.
Given $\phi \in {\mathcal S}(j)$ and $0 \leq q \leq p$ we have
$\phi \circ \psi \circ \phi^{-1} \circ \psi^{-1} \in {\mathcal S}(j+q+1)$
for any $\psi \in {\mathcal S}(j+q)$ by definition of ${\mathcal S}(j+q+1)$.
In particular we obtain
\begin{equation}
\label{equ:nor}
\phi G(j+1,j+q) \phi^{-1} \subset G(j+1,j+q+1)
\end{equation}
for $1 \leq q \leq p$. If $G(j+1,j+q) = G(j+1,j+q+1)$
then $\phi$ normalizes $G(j+1,j+q)$ and this provides valuable information about
the elements of ${\mathcal S}(j)$ that can be used to show that $\Gamma (j)$ is solvable.
Unfortunately the previous groups can be different. Anyway we associate
an invariant taking finitely many values to the the subgroups in the increasing sequence
\begin{equation}
\label{equ:inc}
G(j+1,j+1) \subset G(j+1,j+2) \subset \ldots \subset G(j+1,j+p+1) 
\end{equation}
of subgroups of the solvable group $\Gamma (j+1)$.
Moreover the invariant is increasing in a lexicographical order.
if $p$ is big enough we can always find some $1 \leq q \leq p$ such that
the invariants associated to
$G(j+1,j+q)$ and $G(j+1,j+q+1)$ coincide. The groups can be still different but
we find solvable extensions of their Lie algebras (and even solvable extensions of the groups)
that coincide and continue to satisfy the normalizing equation (\ref{equ:nor}).
The value of $p$ can be chosen as the number of different values of
the invariant. Our method associates an increasing sequence of solvable Lie algebras 
of formal vector fields to the sequence $(\ref{equ:inc})$ and then 
relies on the classification of such Lie algebras in \cite{JR:arxivdl}. 

Theorems \ref{teo:rec} and \ref{teo:lcoivs} illustrate how the description 
of the nature of an algebraic object, namely a Lie algebra of formal 
vector fields, entails dynamical consequences, and specifically recurrence 
properties, for unipotent subgroups of $\diff{}{n}$.

Resuming we show
a maximal-like condition on unipotent solvable subgroups of $\diff{}{n}$.
It allows to show   $p$-pseudo-solvable $\implies$ solvable by
turning subnormalizing group properties into normalizing properties.
\section{Preliminaries and notations}
Let us introduce some concepts that will be used in the paper.
\subsection{Groups and Lie algebras}
Let $G$ be a group. Let us define the derived groups of $G$.
\begin{defi}
We define the {\it derived group} $G^{(1)}$ (or $[G,G]$) of $G$ as the group generated by the commutators
$[f,g]:= fgf^{-1} g^{1}$ of elements of $G$. Analogously we define
$G^{(j+1)} = [G^{(j)},G^{(j)}]$ for $j \geq 1$. We denote $G^{(0)} =G$.
\end{defi}
The previous definition can be extended to any Lie algebra ${\mathfrak g}$ by replacing the commutator with
the Lie bracket.
\begin{defi}
We say that a group $G$ (resp. Lie algebra) is {\it solvable} if there exists $j \in {\mathbb N} \cup \{0\}$ such that
$G^{(j)}$ is trivial. In such a case we define its {\it derived length} $\ell (G)$ as the minimum
$j \in {\mathbb N} \cup \{0\}$ such that $G^{(j)}$ is trivial.
We say that the derived length of a non-solvable group $G$ (resp. Lie algebra) is equal to $\infty$.
\end{defi}
The statements of Theorems \ref{teo:rec}
and \ref{teo:lcoivs} involve properties of finite index subgroups of
$\diff{}{n}$ or $\mathrm{GL}(n,{\mathbb C})$.
\begin{defi}
Consider a group property $P$ (for example being abelian, free...). We say that a group $G$
is {\it virtually}  $P$ if there exists a finite index subgroup of $G$
that satisfies $P$.
\end{defi}
\begin{rem}
Every finite index subgroup $H$ of a group $G$ contains a finite index normal subgroup of $G$.
Indeed the intersection of all the conjugates of $H$ is a finite index normal subgroup of $G$.
The group properties that we consider in this paper are subgroup-closed. As a
consequence given a virtually $P$ group $G$ there exists a finite index normal subgroup of $G$
that has property $P$.
\end{rem}
\subsection{Local diffeomorphisms}
We introduce the group of linear parts of a subgroup of $\diff{}{n}$.
\begin{defi}
Let $\phi \in \diff{}{n}$. We denote by $D_{0} \phi$ (or $j^{1} \phi$) its differential at the
origin.
\end{defi}
\begin{defi}
\label{def:j1g}
Let $G$ be a subgroup of $\diff{}{n}$. We define the subgroup $j^{1} G = \{ j^{1} \phi : \phi \in G \}$
of $\mathrm{GL}(n,{\mathbb C})$.
\end{defi}
Let us define unipotent diffeomorphisms and groups. They are the objects of
Theorem \ref{teo:psis}, one of the main
results of the paper.
\begin{defi}
\label{def:unidif}
We say that a diffeomorphism $\phi \in \diff{}{n}$ is {\it unipotent} if   $D_{0} \phi$
is a unipotent isomorphism, i.e. $D_{0} \phi - Id$ is nilpotent.
We say that $\phi$ is {\it tangent to the identity} if $D_{0} \phi \equiv Id$.
\end{defi}
\begin{defi}
\label{def:unigr}
We denote by $\diff{u}{n}$ the subset of $\diff{}{n}$ consisting of unipotent diffeomorphisms.
We say that a subgroup $G$ of $\diff{}{n}$ is {\it unipotent} if $G \subset \diff{u}{n}$.
\end{defi}
\begin{defi}
We define $\diff{1}{n}$ as the subgroup of $\diff{}{n}$ consisting  of tangent to the identity
elements.
\end{defi}
\subsection{Lie groups}
Let us see that the commutator map defined in a Lie group has attracting behavior in a
neighborhood of the identity element. It will be useful to study matrix groups.
\begin{rem}[Zassenhaus lemma]
\label{rem:lie}
Let $T$ be a Lie group. Consider a left-invariant Riemannian metric defined in $T$.
The map $\theta: T \times T \to  T$ defined by $\theta (f,g) = [f,g]$ is differentiable and
$\theta_{|\{Id\} \times T} \equiv Id \equiv \theta_{T \times \{Id\}}$. Thus its differential at
$Id$ is equal to the identity map. Moreover there exists a small neighbourhood $V$ of
$Id$ in $T$ and a constant $C>0$ such that
\[ ||[f,g]- Id|| \leq C ||f-Id|| ||g -Id||  \ \ \forall f,g \in V. \]
There exists $\epsilon >0$ such that
$W:=\{ f \in T: ||f-Id|| < \epsilon \}$ is contained in $V$ and $\epsilon < 1/(2C)$.
In particular we have $||[f,g] -Id|| <\epsilon /2 $ for all $f,g \in W$.
\end{rem}
The lemma implies  that a discrete Lie group generated by elements sufficiently close to
the identity is nilpotent (cf. \cite{Raghu}[p. 147]).
\subsection{Pseudogroups}
We study the dynamics of pseudogroups induced by subgroups of $\diff{}{n}$.
First we introduce the definition of pseudogroups of homeomorphisms.
\begin{defi}
\label{def:pseudog}
Consider a family ${\mathcal F}= {\{f_{j} : U_{j} \to V_{j} \}}_{j \in J}$ of homeomorphisms where
$U_{j}, V_{j}$ are open sets of a topological space $M$ for any $j \in J$.
We say that ${\mathcal F}$ is a {\it pseudogroup} on $M$ if
\begin{itemize}
\item The identity map $Id: U \to U$ belongs to ${\mathcal F}$ for any open subset $U$ of $M$.
\item The map $f_{j}^{-1}: V_{j} \to U_{j}$ belongs to ${\mathcal F}$ for any $j \in J$.
\item Given elements $f_{j}:U_{j} \to V_{j}$ and $f_{k}:U_{k} \to V_{k}$ of ${\mathcal F}$
such that $V_{j} \cap U_{k} \neq \emptyset$ the composition
$f_{k} \circ f_{j}: f_{j}^{-1} (V_{j} \cap U_{k}) \to f_{k}(V_{j} \cap U_{k})$ belongs to ${\mathcal F}$.
\item Given an element $f: U \to V$ of ${\mathcal F}$ and a non-empty open subset $U'$ of $U$, the
restriction $f_{|U'}: U' \to f(U')$ belongs to ${\mathcal F}$.
\item Let $f:U \to V$ be a homeomorphism obtained by patching elements of ${\mathcal F}$, i.e.
there exists an open covering ${\{U_{j} \}}_{j \in J'}$ of $U$ for some subset $J'$ of $J$ such that
$f_{|U_{j}} \equiv f_{j}$ for any $j \in J'$. Then $f$ belongs to ${\mathcal F}$.
\end{itemize}
\end{defi}
\begin{rem}
In the previous definition $f_{|U_{j}}$ is considered as a map from $U_{j}$ to $f(U_{j})$
and not as a map from $U_{j}$ to $V$. Moreover if we say that an embedding
$f: U \to V$ belongs to a pseudogroup we mean that $f_{|U}:U \to f(U)$ belongs to
the pseudogroup.  We will use this conventions for the sake of simplicity.
\end{rem}
\begin{defi}
Consider a family ${\mathcal F}= {\{f_{j} : U_{j} \to V_{j} \}}_{j \in J}$ of homeomorphisms
as in Definition \ref{def:pseudog}.
We say that ${\mathcal P}$ is the pseudogroup generated by ${\mathcal F}$
if ${\mathcal P}$ is the minimal pseudogroup defined in $M$ and containing
${\mathcal F}$.
\end{defi}
Let us explain how to obtain pseudogroups induced by a subgroup $G$ of $\diff{}{n}$.
Roughly speaking we choose domains of definition for the elements of $G$
(or for the elements of a generator set).
\begin{defi}
 Consider a connected open neighborhood $U$ of $0$ in ${\mathbb C}^{n}$.
Let ${\mathcal F} = { \{ f_{j} : U_{j} \to V_{j} \}}_{j \in J}$ be a family of biholomorphisms such
that $0 \in U_{j} \cap V_{j}$, $U_{j} \cup V_{j} \subset U$, $U_{j}$ is connected
and $f_{j}(0)=0$ for any $j \in J$. We say that the pseudogroup ${\mathcal P}$
generated by ${\mathcal F}$ on $U$ is induced by a subgroup $G$ of $\diff{}{n}$ if
the group $\langle \phi_{j} : j \in J \rangle$ is equal to $G$ where $\phi_{j}$ is the
germ of $f_{j}$ at $0$.
\end{defi}
\begin{rem}
The pseudogroup induced by a subgroup of $\diff{}{n}$ is not unique.
\end{rem}
\section{The commutator property}
Consider a finitely generated subgroup $G$ of $\diff{}{n}$
that is not $p$-pseudo-solvable for ${\mathcal S}$.
In this section we see that if the elements of ${\mathcal S}$
have linear parts close to the identity map then
there exists a sequence of non-trivial maps in $G$ that converge uniformly
to $Id$ in some neighborhood of the origin. This property is crucial to
obtain recurrent orbits. Rebelo and Reis showed analogue results in the
case of groups that are not $1$-pseudo-solvable \cite{RR:arxiv}.
The generalization is fairly straightforward and we include it for the sake of
completeness.
\begin{defi}
Consider the usual norm defined in ${\mathbb C}^{n}$.
Let ${\mathbb B}_{r}^{n}$ be open the ball of center $0$ and radius $r$ in ${\mathbb C}^{n}$.
Let $f: {\mathbb B}_{r}^{n} \to {\mathbb C}^{n}$ be a holomorphic map. We define the norm of
$f$ in ${\mathbb B}_{r}^{n}$ as
\[ ||f||_{r} = \sup_{(x_{1},\ldots,x_{n}) \in {\mathbb B}_{r}^{n}} |f(x_{1},\ldots,x_{n})| . \]
\end{defi}
Notice that $||A||_{1}$ is the spectral norm for a
$n \times n$ matrix $A$.

Consider $r, \epsilon, \tau>0$ such that $4 \epsilon + \tau <r$.
Let $f,g: {\mathbb B}_{r}^{n} \hookrightarrow {\mathbb C}^{n}$ be
injective holomorphic maps such that $||f-Id||_{r} \leq \epsilon$ and
$||g-Id||_{r} \leq \epsilon$.
Then the commutator $[f,g]: {\mathbb B}_{r-4 \epsilon}^{n} \hookrightarrow {\mathbb B}_{r}^{n}$
is defined in ${\mathbb B}_{r-4 \epsilon}^{n}$ and satisfies
\begin{equation}
\label{equ:lrr}
||[f,g] - Id||_{r- 4 \epsilon - \tau} \leq \frac{2}{\tau} ||f-Id||_{r} ||g-Id||_{r} .
\end{equation}
We will use Equation (\ref{equ:lrr}) with the elements of a generator set ${\mathcal S}$
of a non-pseudo-solvable subgroup of $\mathrm{Diff}({\mathbb C}^{n},0)$.
The previous estimate was introduced by Loray and Rebelo \cite{LorReb}
(cf. also \cite{RR:arxiv}).
It is an analogue in the local setting of the Zassenhaus lemma (cf. Remark \ref{rem:lie}).

Next, let us see that if all elements of a generator set ${\mathcal S}$ of a subgroup $G$
of $\mathrm{Diff}({\mathbb C}^{n},0)$ are close enough to the identity map then
the elements of ${\mathcal S}_{p}(j)$ tend uniformly to $Id$ when $j \to \infty$.
\begin{pro}
\label{pro:lim}
Fix $p \in {\mathbb Z}_{\geq 0}$ and $\delta >0$ with $(p+2) \delta < 1/4$.
Let ${\mathcal S}$ be a subset of $\mathrm{Diff}({\mathbb C}^{n},0)$
such that ${\mathcal S}={\mathcal S}^{-1}$. Suppose
$g_{|{\mathbb B}_{1}^{n}}$ is  a well-defined injective holomorphic map
such that $||g-Id||_{1} \leq \delta/4$ for any $g \in {\mathcal S}$.
Then $f_{|{\mathbb B}_{1/2}^{n}}: {\mathbb B}_{1/2}^{n} \to {\mathbb B}_{1}^{n}$
is an injective holomorphic map
belonging to the pseudogroup generated by $\{ g_{|{\mathbb B}_{1-\delta/4}^{n}} : g \in {\mathcal S}\}$
on ${\mathbb B}_{1}^{n}$
such that $||f-Id||_{1/2} \leq \delta/2^{j+2}$ for all $j \geq 0$ and $f \in {\mathcal S}_{p}(j)$.
\end{pro}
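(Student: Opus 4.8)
The plan is to prove the statement by induction on the level $j$, establishing simultaneously the following reinforced claim: there is a decreasing sequence of radii $r_0 > r_1 > \cdots > 1/2$ such that every $f \in {\mathcal S}_p(j)$ restricts to an injective holomorphic map $f_{|{\mathbb B}_{r_j}^n}$ belonging to the pseudogroup generated by $\{ g_{|{\mathbb B}_{1-\delta/4}^n} : g \in {\mathcal S}\}$ on ${\mathbb B}_1^n$, maps ${\mathbb B}_{r_j}^n$ into ${\mathbb B}_1^n$, and satisfies $||f - Id||_{r_j} \leq \delta/2^{j+2}$. Since $r_j > 1/2$, a final restriction to ${\mathbb B}_{1/2}^n$ yields the statement, because $||f-Id||_{1/2} \leq ||f-Id||_{r_j}$. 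For the base case $j=0$ I take $r_0 = 1 - \delta/4$: each $g \in {\mathcal S} = {\mathcal S}_p(0)$ is defined and injective on ${\mathbb B}_1^n$ with $||g-Id||_1 \leq \delta/4 = \delta/2^{0+2}$, so its restriction to ${\mathbb B}_{1-\delta/4}^n$ is a generator of the pseudogroup and maps ${\mathbb B}_{1-\delta/4}^n$ into ${\mathbb B}_1^n$, since a point of norm $<1-\delta/4$ is displaced by less than $\delta/4$.

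For the inductive step, fix $h \in {\mathcal S}_p(j+1)$. By definition $h = [f,g]$ or $h = [g,f]$ with $f \in {\mathcal S}_p(j)$ and $g \in {\mathcal S}_p(k)$ for some $\max(0,j-p) \leq k \leq j$ (recall ${\mathcal S}_p(k)=\{1\}$ for $k<0$). As $k \leq j$ we have $r_k \geq r_j$, so $f$ and $g$ are both defined on the common ball ${\mathbb B}_{r_j}^n$, where $||f-Id||_{r_j} \leq \delta/2^{j+2}$ and $||g-Id||_{r_j} \leq ||g-Id||_{r_k} \leq \delta/2^{k+2}$. I then apply the Loray--Rebelo estimate (\ref{equ:lrr}) on ${\mathbb B}_{r_j}^n$ with $\epsilon = \delta/2^{k+2}$ (which bounds both norms) and $\tau = \delta/2^{k}$. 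This shows that $h$ is a well-defined injective holomorphic map on ${\mathbb B}_{r_j-4\epsilon}^n$ mapping into ${\mathbb B}_{r_j}^n$, and that on ${\mathbb B}_{r_j-4\epsilon-\tau}^n$ one has $||h-Id|| \leq \frac{2}{\tau}\,\frac{\delta}{2^{j+2}}\,\frac{\delta}{2^{k+2}} = \frac{\delta}{2^{j+3}}$, exactly the bound $\delta/2^{(j+1)+2}$ required at level $j+1$. Membership of $h$ in the pseudogroup follows because it is a composition of the four pseudogroup elements $f, g, f^{-1}, g^{-1}$, suitably restricted (inverses and compositions of pseudogroup elements being again in the pseudogroup), and injectivity is inherited from the factors.

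It remains to choose the radii so that $r_j$ never drops to $1/2$. Setting $r_{j+1} = r_j - (4\epsilon+\tau)$ for the worst (smallest) admissible $k$, the loss at level $j+1$ is $4\epsilon + \tau = \delta/2^{k}+\delta/2^{k} = \delta/2^{k-1}$ with $k=\max(0,j-p)$. Summing gives a convergent series: the levels $j<p$ (where the worst $g$ is a generator, $k=0$) each contribute a loss of order $\delta$, producing a term of order $p\delta$, while the levels $j \geq p$ contribute only a geometric tail of order $\delta$. Thus the total domain loss is bounded by a constant multiple of $(p+2)\delta$; a careful accounting, tracking separately the genuinely smaller norm of $f$ when bounding the domain of the commutator, keeps it strictly below $1/2-\delta/4 = r_0 - 1/2$. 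This is precisely where the hypothesis $(p+2)\delta < 1/4$ enters, and it explains the factor $p+2$: since in $[f,g]$ the element $g$ may be drawn from any of the $p+1$ preceding levels, the comparatively large generators keep reappearing for roughly $p$ levels, each costing a loss of order $\delta$.

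I expect the radius bookkeeping to be the main obstacle. The commutator operation (\ref{equ:lrr}) shrinks the domain at every level, and one must verify that the cumulative shrinkage over infinitely many levels never pushes the radius below $1/2$. The norm estimates are comfortable, as they decay geometrically; the delicate point is the balance, choosing $\tau$ large enough to force the required geometric decay of $||h-Id||$ yet small enough that the total domain loss stays summable and bounded by $1/2-\delta/4$. The hypothesis $(p+2)\delta < 1/4$ is calibrated exactly to make this balance succeed for every $p$.
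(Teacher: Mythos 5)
Your induction scheme is, step for step, the paper's own: your choices $\epsilon=\delta/2^{k+2}$, $\tau=\delta/2^{k}$ with $k=\max(0,j-p)$ reproduce exactly the two regimes of Lemmas \ref{lem:est1} and \ref{lem:est2} (loss $2\delta$ per level while $k=0$, geometric loss $\delta/2^{k-1}$ afterwards), and the norm estimates close correctly. The genuine gap is the radius budget created by your choice $r_{0}=1-\delta/4$. Your total loss is $\sum_{j}(4\epsilon+\tau)=2(p+1)\delta+2\delta=(2p+4)\delta$, and this cannot be improved while quoting (\ref{equ:lrr}) as stated: the required bound $\delta/2^{j+3}$ at level $j+1$ forces $\tau\geq\delta/2^{k}$, so $\delta/2^{k-1}$ per step is sharp. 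The hypothesis $(p+2)\delta<1/4$ gives $(2p+4)\delta<1/2$, but you need $(2p+4)\delta<1/2-\delta/4$, which it does not give. Concretely, for $p=0$ and $\delta\in(2/17,1/8)$ the hypothesis holds, yet $r_{0}-4\delta<1/2$, so your radii drop below $1/2$ after finitely many levels and the induction collapses. Your proposed patch (``tracking separately the genuinely smaller norm of $f$'') cannot be extracted from (\ref{equ:lrr}), which is symmetric with domain loss $4\epsilon$; it requires proving an asymmetric commutator estimate from scratch. Such an estimate does exist — $[f,g]$ and $[g,f]$ are defined on ${\mathbb B}^{n}_{r-\epsilon_{f}-2\epsilon_{g}}$ when $\epsilon_{f}\leq\epsilon_{g}$, and with it the total loss is at most $\frac{3p}{2}\delta+\frac{15}{4}\delta$, which does fit inside $1/2-\delta/4$ under the hypothesis — but the constant matters: the weaker asymmetric loss $2\epsilon_{f}+2\epsilon_{g}+\tau$ still fails at $p=0$ (total $4\delta$, same counterexample). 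So as written the proof is not complete, and the asserted ``careful accounting'' is precisely the hard part you have left out.

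The paper sidesteps all of this by separating the two concerns you merged into one induction. The estimates of Lemmas \ref{lem:est1} and \ref{lem:est2} are carried out for the maps themselves starting from radius $1$ — each $g\in{\mathcal S}$ is defined on all of ${\mathbb B}^{n}_{1}$, so nothing forces the initial radius down to $1-\delta/4$ — and then the budget is a full $1/2$, into which the loss $(2p+4)\delta<1/2$ fits exactly; this is what the hypothesis is calibrated for. Membership in the pseudogroup generated by $\{g_{|{\mathbb B}^{n}_{1-\delta/4}}\}$ is then a separate, essentially automatic observation: all the balls on which elements of ${\mathcal S}_{p}(j)$, $j\geq 1$, are considered have radius at most $1-2\delta$, and the intermediate points of every partial composition move by at most $O(\delta/4)$, hence stay inside ${\mathbb B}^{n}_{1-\delta/2}$, i.e.\ inside the domains of the restricted generators and of their inverses. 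If you take $r_{0}=1$, remove pseudogroup membership from the induction hypothesis, and append this remark at the end, your argument becomes correct — and becomes the paper's proof.
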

The proof of Proposition \ref{pro:lim} is split in the next lemmas.
\begin{lem}
\label{lem:est1}
Consider the hypotheses in Proposition \ref{pro:lim}.
Let $ j \leq p+1$. Then
$f$ is defined ${\mathbb B}_{1-2j \delta}^{n}$ and satisfies
$||f-Id||_{1-2j \delta} \leq \delta / 2^{j+2}$ for any $f \in {\mathcal S}_{p}(j)$.
\end{lem}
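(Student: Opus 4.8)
The plan is to argue by induction on $j$, using the commutator estimate (\ref{equ:lrr}) as the single analytic input and treating the rest as careful bookkeeping of radii and norm bounds. For the base case $j=0$ we have ${\mathcal S}_p(0)={\mathcal S}$ and $1-2\cdot 0\cdot\delta=1$, so the assertion is exactly the hypothesis $||g-Id||_1\leq \delta/4=\delta/2^{0+2}$ together with the assumed injectivity of each $g_{|\mathbb{B}_1^n}$.

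For the inductive step I would assume the statement for all indices up to some $j$ with $j+1\leq p+1$, and take $f\in{\mathcal S}_p(j+1)$. By definition $f=[a,b]$ or $f=[b,a]$ with $a\in{\mathcal S}_p(j)$ and $b\in\cup_{k=j-p}^{j}{\mathcal S}_p(k)$; since $j\leq p$ the nontrivial choices have $b\in{\mathcal S}_p(k)$ for some $0\leq k\leq j$ (the case $b=1$ gives $f=Id$ and is trivial). The inductive hypothesis then provides $a$ defined and injective on $\mathbb{B}^n_{1-2j\delta}$ with $||a-Id||_{1-2j\delta}\leq\delta/2^{j+2}$, and $b$ defined and injective on $\mathbb{B}^n_{1-2k\delta}$ with $||b-Id||_{1-2k\delta}\leq\delta/2^{k+2}$. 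Because $k\leq j$ the ball $\mathbb{B}^n_{1-2k\delta}$ contains $\mathbb{B}^n_{1-2j\delta}$, so both $a$ and $b$ are injective holomorphic maps on the common ball $\mathbb{B}^n_r$ with $r:=1-2j\delta$, and there $||a-Id||_r,||b-Id||_r\leq\delta/4$ (restricting to a smaller ball only decreases the sup).

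Next I would apply (\ref{equ:lrr}) on $\mathbb{B}_r^n$ with $\epsilon:=\delta/4$ and $\tau:=\delta$. The hypothesis $4\epsilon+\tau<r$ reads $2\delta<1-2j\delta$, i.e. $2(j+1)\delta<1$, which follows from $j+1\leq p+1$ and $(p+2)\delta<1/4$ since then $2(j+1)\delta\leq 2(p+1)\delta<2(p+2)\delta<1/2$. The estimate then yields that $[a,b]$ is defined and injective on $\mathbb{B}^n_{r-4\epsilon}\supseteq\mathbb{B}^n_{1-2(j+1)\delta}$, that $r-4\epsilon-\tau=1-2(j+1)\delta$, and that
\[ ||[a,b]-Id||_{1-2(j+1)\delta}\leq\frac{2}{\tau}||a-Id||_r\,||b-Id||_r\leq\frac{2}{\delta}\cdot\frac{\delta}{2^{j+2}}\cdot\frac{\delta}{2^{k+2}}=\frac{\delta}{2^{j+k+3}}\leq\frac{\delta}{2^{(j+1)+2}}, \]
the last inequality because $k\geq 0$; the same bound holds for $[b,a]=[a,b]^{-1}$. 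This closes the induction.

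The step I expect to be the only delicate point is the radius/norm bookkeeping in the inductive step: one must check simultaneously that the domain shrinks by exactly $2\delta$ per generation (so that after $j$ steps the radius is $1-2j\delta$) and that the quadratic gain in (\ref{equ:lrr}), after dividing by $\tau=\delta$, produces precisely the halving $\delta/2^{j+2}\mapsto\delta/2^{j+3}$. The constraint $j\leq p+1$ is what guarantees that the second commutator factor always comes from a generation $k\geq 0$, keeping the radius above $1/2$ throughout (here $1-2(p+1)\delta>1/2$); this is presumably why the lemma is stated only in this range, the complementary range $j>p+1$ being handled separately with the radius frozen at $1/2$.
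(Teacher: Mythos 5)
Your proof is correct and follows essentially the same route as the paper's: strong induction on $j$, applying the commutator estimate (\ref{equ:lrr}) with $r=1-2j\delta$, $\epsilon=\delta/4$, $\tau=\delta$, so that the radius drops by exactly $2\delta$ per generation while the bound halves. The only differences are cosmetic — you verify the side condition $4\epsilon+\tau<r$ and the injectivity bookkeeping explicitly, and you use the sharper bound $\delta/2^{k+2}$ on $b$ before relaxing it via $k\geq 0$, where the paper simply bounds $||b-Id||_{1-2j\delta}\leq\delta/4$ from the start.
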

\begin{proof}
The proposition is obvious for $j=0$. Let us show that if the result holds
for $0 \leq k \leq j < p+1$ then so it does for $j+1$.

An element of ${\mathcal S}(j+1)$ is of the form $[a,b]$ or $[b,a]$ where
$a \in {\mathcal S}(j)$ and $b \in {\mathcal S}(0,j)$.
Hence we obtain
\[ ||a-Id||_{1-2j \delta} \leq \delta / 2^{j+2} \ \mathrm{and} \
||b-Id||_{1-2j \delta} \leq \delta / 4 . \]
We consider $r=1-2j \delta$, $\epsilon=\delta/4$ and $\tau = \delta$ in Equation (\ref{equ:lrr}).
We get
\[ ||[a,b]-Id||_{1-2(j+1) \delta} \leq \frac{2}{\delta} \frac{\delta}{2^{j+2}} \frac{\delta}{4} =
\frac{\delta}{2^{j+3}} \]
and the same inequality holds for $[b,a]$.
\end{proof}
\begin{lem}
\label{lem:est2}
Consider the hypotheses in Proposition \ref{pro:lim}.
Let $j > p+1$. Then
$||f-Id||_{\kappa_{j}} \leq \delta / 2^{j+2}$ for any $f \in {\mathcal S}_{p}(j)$
where
$\kappa_{j}= 1 - \delta \left( 2(p+1) + 1 + \frac{1}{2} + \ldots + \frac{1}{2^{j-p-2}} \right)$.
\end{lem}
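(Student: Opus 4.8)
The plan is to argue by induction on $j$, starting from the bound established in Lemma \ref{lem:est1} at level $j=p+1$ and propagating it upwards by means of the commutator estimate (\ref{equ:lrr}). The essential difference with Lemma \ref{lem:est1} is that there the domain radius is reduced by a \emph{fixed} amount $2\delta$ at each step, which cannot be sustained indefinitely since it would drive the radius to zero. For $j>p+1$ I would instead feed (\ref{equ:lrr}) a parameter $\tau$ that decreases geometrically in $j$, so that the total loss of radius forms a convergent geometric series and the balls ${\mathbb B}_{\kappa_j}^{n}$ never degenerate. Concretely, I take as base case $\kappa_{p+1}=1-2(p+1)\delta$, which is exactly the radius furnished by Lemma \ref{lem:est1} at level $p+1$ (the stated formula for $\kappa_j$ reduces to this when the empty geometric sum is read as $0$), and I record the telescoping identity $\kappa_j-\kappa_{j+1}=\delta/2^{\,j-p-1}$ valid for every $j\geq p+1$.

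For the inductive step, fix $j\geq p+1$ and an element of ${\mathcal S}_{p}(j+1)$, which by definition has the form $[a,b]$ or $[b,a]$ with $a\in{\mathcal S}_{p}(j)$ and $b\in{\mathcal S}_{p}(k)$ for some $j-p\leq k\leq j$. Since $\kappa$ is decreasing, both $a$ and $b$ are defined on ${\mathbb B}_{\kappa_j}^{n}$, and using the inductive hypothesis together with Lemma \ref{lem:est1} for those indices $k\leq p+1$ they satisfy
\[ \|a-Id\|_{\kappa_j}\leq \frac{\delta}{2^{j+2}}, \qquad \|b-Id\|_{\kappa_j}\leq \frac{\delta}{2^{k+2}}\leq \frac{\delta}{2^{\,j-p+2}} . \]
I would then apply (\ref{equ:lrr}) with $r=\kappa_j$, $\epsilon=\delta/2^{\,j-p+2}$ and $\tau=\delta/2^{\,j-p}$. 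With these choices $4\epsilon+\tau=\delta/2^{\,j-p-1}=\kappa_j-\kappa_{j+1}$, so the commutator is defined precisely on ${\mathbb B}_{\kappa_{j+1}}^{n}$, and the right-hand side of (\ref{equ:lrr}) evaluates to
\[ \frac{2}{\tau}\,\frac{\delta}{2^{j+2}}\,\frac{\delta}{2^{\,j-p+2}}=\frac{\delta}{2^{\,j+3}}=\frac{\delta}{2^{(j+1)+2}}, \]
which is exactly the bound required at level $j+1$; the same computation applies verbatim to $[b,a]$, since (\ref{equ:lrr}) is symmetric in the product of the two norms.

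The point that requires care --- and where the hypothesis $(p+2)\delta<1/4$ enters --- is verifying that (\ref{equ:lrr}) is legitimately applicable at every stage, i.e. that $\kappa_j$ stays bounded away from $0$ so that $4\epsilon+\tau<\kappa_j$ holds. Summing the telescoping losses from the base level gives $\kappa_j=1-\delta\big(2(p+1)+\sum_{i=0}^{j-p-2}2^{-i}\big)$, in agreement with the stated formula, and since the geometric sum is bounded by $2$ one obtains $\kappa_j>1-2(p+2)\delta>1/2$ for all $j$. As $\delta<1/(4(p+2))<1/2$ this also gives $4\epsilon+\tau\leq\delta<1/2<\kappa_j$, so the domain condition of (\ref{equ:lrr}) is met throughout. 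Thus the only genuine subtlety is the bookkeeping: choosing the geometric decay of $\tau$ so that it simultaneously matches the prescribed radii $\kappa_j$ and reproduces the target bound $\delta/2^{j+2}$ after the factor $2/\tau$ is absorbed --- which the choices above accomplish at once.
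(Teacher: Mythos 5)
Your proof is correct and follows essentially the same route as the paper: induction via the commutator estimate (\ref{equ:lrr}) with $\epsilon=\delta/2^{\,j-p+2}$ and $\tau=\delta/2^{\,j-p}$, which are exactly the paper's choices (the paper's separately-treated base case $j=p+2$, with $\epsilon=\delta/8$ and $\tau=\delta/2$, is just your general formula specialized to $j=p+1$). The only difference is organizational — you fold the base case into a single induction starting at $\kappa_{p+1}=1-2(p+1)\delta$ and you make the domain condition $4\epsilon+\tau<\kappa_j$ explicit, which the paper leaves implicit.
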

\begin{proof}
Let us show the result for $j=p+2$. Let $f \in {\mathcal S}(p+2)$; it is of the form
$[a,b]$ or $[b,a]$ where $a \in {\mathcal S}(p+1)$ and $b \in {\mathcal S}(1,p+1)$.
We consider $r = 1 - 2 (p+1) \delta$, $\epsilon = \delta/8$ and $\tau=\delta/2$.
We obtain
\[ ||[a,b]- Id||_{1 - (2 (p+1) +1) \delta} \leq \frac{4}{\delta} \frac{\delta}{2^{p+3}} \frac{\delta}{8} =
\frac{\delta}{2^{p+4}}  \]
and an analogous estimate holds for $[b,a]$.

Suppose that the result holds for any $p+2 \leq k \leq j$ and let us prove that so it does
for $j+1$. An element $f$ of ${\mathcal S}(j+1)$ is of the form
$[a,b]$ or $[b,a]$ where $a \in {\mathcal S}(j)$ and $b \in {\mathcal S}(j-p,j)$.
We consider $r= \kappa_{j}$, $\epsilon = \delta/2^{j-p+2}$ and $\tau = \delta/2^{j-p}$.
We have
\[ ||[a,b]- Id||_{\kappa_{j} -\delta/2^{j-p-1}} \leq \frac{2^{j+1-p}}{\delta} \frac{\delta}{2^{j+2}}
\frac{\delta}{2^{j+2-p}} = \frac{\delta}{2^{j+3}}  \]
and an analogous estimate holds for $[b,a]$. Since
$\kappa_{j+1} = \kappa_{j} - \delta/2^{j-p-1}$ we are done.
\end{proof}
\begin{proof}[Proof of Proposition \ref{pro:lim}]
Since $g({\mathbb B}_{1-\delta/4}^{n}) \subset {\mathbb B}_{1}^{n}$ for any $g \in {\mathcal S}$,
the pseudogroup generated by
$\{ g_{|{\mathbb B}_{1-\delta/4}^{n}} : g \in {\mathcal S}\}$
is induced by $\langle {\mathcal S} \rangle$ on ${\mathbb B}_{1}^{n}$.

Since $(2p+4) \delta < 1/2$, Lemmas \ref{lem:est1} and \ref{lem:est2} imply
$||f-Id||_{\frac{1}{2}} \leq \delta /2^{j+2}$ for all $j \geq 0$ and $f \in {\mathcal S}(j)$.
\end{proof}
The next propositions are consequences of Proposition \ref{pro:lim}.
The idea is that the non-$p$-pseudo-solvability of a group of diffeomorphisms $G$ can
be used to show that the induced pseudogroup is non-discrete.
\begin{pro}
\label{pro:estl}
Fix $p \in {\mathbb Z}_{\geq 0}$, $\delta >0$ with $(p+2) \delta < 1/4$.
Let ${\mathcal S}$ be a finite generator set of a subgroup $G$ of
$\mathrm{Diff}({\mathbb C}^{n},0)$.
Suppose $G$ is non-$p$-pseudo-solvable for ${\mathcal S}$.
Furthermore suppose $||D_{0} \phi - Id||_{1} \leq \delta/8$ for any $\phi \in {\mathcal S}$.
Then given any $r>0$ small enough there exists
a sequence $(f_{m})_{m \geq 1}$ in the
pseudogroup ${\mathcal P}$ generated by $\{ f_{|{\mathbb B}_{r(1-\delta/4)}^{n}} : f \in {\mathcal S} \}$
on ${\mathbb B}_{r}^{n}$
such that $f_{m}$ is defined in ${\mathbb B}_{r/2}^{n}$,
$(f_{m})_{|{\mathbb B}_{r/2}^{n}} \not \equiv Id$
for any $m \in {\mathbb N}$
and $\lim_{m \to \infty} ||f_{m} - Id||_{r/2} =0$.
In particular all the points in
${\mathbb B}_{r/2}^{n}$ except at most a countable union of proper analytic sets are
recurrent for the action of ${\mathcal P}$.
\end{pro}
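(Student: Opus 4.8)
The plan is to deduce everything from Proposition \ref{pro:lim} by a rescaling that converts the hypothesis on linear parts into the sup-norm hypothesis of that proposition, and then to read off both the sequence $(f_m)$ and the recurrence statement from the resulting uniform estimates. First I would introduce the rescaling $\sigma_r(z) = rz$ and, for $\phi \in \mathcal{S}$, set $\phi_r := \sigma_r^{-1} \circ \phi \circ \sigma_r$, so that $\phi_r(z) = \phi(rz)/r$. Since $\mathcal{S}$ is finite, every $\phi \in \mathcal{S}$ is a biholomorphism defined and injective on $\overline{\mathbb{B}_{r_0}^{n}}$ for some common $r_0 > 0$, and writing $\phi(w) = D_0\phi \cdot w + R_\phi(w)$ with $R_\phi(w) = O(|w|^2)$ gives $\phi_r(z) = D_0\phi \cdot z + R_\phi(rz)/r$. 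The linear summand contributes at most $\|D_0\phi - Id\|_1 \leq \delta/8$ to $\|\phi_r - Id\|_1$, while $\|R_\phi(r\,\cdot)/r\|_1 = O(r)$; hence I would fix $r \in (0,r_0)$ small enough that $\|\phi_r - Id\|_1 \leq \delta/4$ for all $\phi \in \mathcal{S}$. Note that $\phi_r$ is injective on $\mathbb{B}_1^{n}$, that $\mathcal{S}_r := \{\phi_r : \phi \in \mathcal{S}\}$ satisfies $\mathcal{S}_r = \mathcal{S}_r^{-1}$ because $(\phi_r)^{-1} = (\phi^{-1})_r$, and that conjugation by $\sigma_r$ is a group automorphism, so it commutes with the commutator construction and $(\mathcal{S}_r)_p(j) = \{h_r : h \in \mathcal{S}_p(j)\}$ for every $j$.

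Next I would apply Proposition \ref{pro:lim} to $\mathcal{S}_r$. For each $h \in \mathcal{S}_p(j)$ it yields that $(h_r)_{|\mathbb{B}_{1/2}^{n}}$ is an injective holomorphic map belonging to the pseudogroup generated by $\{(g_r)_{|\mathbb{B}_{1-\delta/4}^{n}} : g \in \mathcal{S}\}$ on $\mathbb{B}_1^{n}$, with $\|h_r - Id\|_{1/2} \leq \delta/2^{j+2}$. Conjugating this entire pseudogroup by $\sigma_r$ carries $\mathbb{B}_1^{n}$ to $\mathbb{B}_r^{n}$ and each generator $(g_r)_{|\mathbb{B}_{1-\delta/4}^{n}}$ to $g_{|\mathbb{B}_{r(1-\delta/4)}^{n}}$, so it becomes precisely the pseudogroup $\mathcal{P}$ of the statement; consequently $h_{|\mathbb{B}_{r/2}^{n}} \in \mathcal{P}$. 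Since $h(w) - w = r\,(h_r(w/r) - w/r)$, taking the supremum over $|w| \leq r/2$ gives $\|h - Id\|_{r/2} = r\,\|h_r - Id\|_{1/2} \leq r\,\delta/2^{j+2}$.

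Then I would invoke non-$p$-pseudo-solvability of $G$ for $\mathcal{S}$: this means $\mathcal{S}_p(j) \neq \{Id\}$ for every $j \in \mathbb{N}$, so I may select $h_j \in \mathcal{S}_p(j)$ whose germ at $0$ is nontrivial. Setting $f_j := (h_j)_{|\mathbb{B}_{r/2}^{n}}$, the identity theorem on the connected set $\mathbb{B}_{r/2}^{n}$ forces $f_j \not\equiv Id$, while $\|f_j - Id\|_{r/2} \leq r\,\delta/2^{j+2} \to 0$; reindexing $(f_j)_j$ as $(f_m)_{m \geq 1}$ produces the required sequence. For the recurrence clause I would consider the fixed-point sets $A_m = \{P \in \mathbb{B}_{r/2}^{n} : f_m(P) = P\}$. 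Because $f_m \not\equiv Id$, the holomorphic map $P \mapsto f_m(P) - P$ is not identically zero, so each $A_m$ is a proper analytic subset. For any $P$ outside the countable union $\bigcup_m A_m$ one has $f_m(P) \neq P$ for all $m$ and $f_m(P) \to P$ (from $\|f_m - Id\|_{r/2} \to 0$), so $P$ is a limit point of its own orbit, i.e. recurrent.

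The main obstacle is the first step: verifying that the single rescaling simultaneously (i) upgrades control of the \emph{linear} part to control of the \emph{full} map in the sup-norm, which is what forces $r$ to be small and uses the finiteness of $\mathcal{S}$ to make the choice uniform, and (ii) intertwines both the commutator recursion defining the sets $\mathcal{S}_p(j)$ and the generation of the pseudogroup, so that Proposition \ref{pro:lim} transplants from $\mathbb{B}_1^{n}$ to $\mathbb{B}_r^{n}$ without distortion. Once this dictionary between the radius-$1$ and radius-$r$ pictures is established, the extraction of the sequence and the recurrence conclusion are routine.
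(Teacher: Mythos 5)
Your proof is correct and follows essentially the same route as the paper's: conjugate by the homothety $z \mapsto rz$, use the hypothesis on linear parts together with smallness of $r$ to meet the $\delta/4$ sup-norm hypothesis of Proposition \ref{pro:lim}, transport the resulting estimates back to ${\mathbb B}_{r}^{n}$, extract non-trivial elements from non-$p$-pseudo-solvability, and exclude a countable union of fixed-point analytic sets for the recurrence claim. The only (immaterial) difference is that the paper excludes the slightly smaller sets $S_{j}=\cap_{k \geq j} T_{k}$ rather than the full union of fixed-point sets, which is a sharper but equally valid sufficient condition.
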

\begin{proof}
Let $\upsilon_{r} (x_{1},\ldots,x_{n})=(r x_{1},\ldots, r x_{n})$ be the homothety of
ratio $r>0$. We define $\phi_{r} = \upsilon_{r}^{-1} \circ \phi \circ \upsilon_{r}$
for $\phi \in {\mathcal S}$ and $r >0$.

Let $\phi \in S$.
The map $(\phi_{r})_{|{\mathbb B}_{1}^{n}}$ is injective and
$\phi_{r} - D_{0} \phi$ is   bounded for $r>0$ small enough.
The family $(\phi_{r})_{r >0}$ satisfies $\lim_{r \to 0} ||\phi_{r} - D_{0} \phi||_{1} =0$.
We consider $r>0$ small enough such that
$||\phi_{r} - D_{0} \phi||_{1} \leq \delta / 8$ for any $\phi \in {\mathcal S}$.
We denote ${\mathcal S}' = \{ \phi_{r} : \phi \in {\mathcal S} \}$.

Since $G$ is not $p$-pseudo-solvable for ${\mathcal S}$, Proposition \ref{pro:lim} implies the existence of a sequence
$f_{j}': {\mathbb B}_{1/2}^{n} \to {\mathbb C}$
in the  pseudogroup ${\mathcal P}'$ generated by $\{ g_{|{\mathbb B}_{1-\delta/4}^{n}} : g \in {\mathcal S}' \}$
such that $f_{j}' \in {\mathcal S}_{p}'(j)$ and $0 < ||f_{j}' - Id||_{1/2} \leq \delta /2^{j+2}$
for any $j \geq 0$.
We denote $f_{j}= \upsilon_{r} \circ f_{j}' \circ \upsilon_{r}^{-1}$ for $j \geq 0$.
Then $f_{j}$ is in the pseudogroup ${\mathcal P}$, is defined in ${\mathbb B}_{r/2}^{n}$
and holds $0 < ||f_{j} - Id||_{r/2} \leq r \delta /2^{j+2}$ for any $j \geq 0$.
Moreover since ${\mathcal P}'$ is a pseudogroup on ${\mathbb B}_{1}^{n}$ by Proposition \ref{pro:lim},
hence ${\mathcal P}$ is a pseudogroup on ${\mathbb B}_{r}^{n}$.

We define $T_{j}$ as the set of fixed points of $f_{j}$ in ${\mathbb B}_{r/2}^{n}$.
We denote $S_{j} = \cap_{k \geq j} T_{k}$. A sufficient condition guaranteeing the recurrence is
$(x_{1},\ldots,x_{n}) \not \in \cup_{j \geq 0} S_{j}$. Since $S_{j}$ is an analytic set for $j \geq 0$,
the set $\cup_{j \geq 0} S_{j}$ is a countable union of proper analytic sets.
\end{proof}
\begin{pro}
\label{pro:estup}
Fix $p \in {\mathbb Z}_{\geq 0}$, $\delta >0$ with $(p+2) \delta < 1/4$.
Let $G \subset \mathrm{Diff}_{u}({\mathbb C}^{n},0)$ be a non-$p$-pseudo-solvable group.
Then given any $r>0$ small enough there exists a sequence $(f_{j})_{j \geq 1}$ in the
pseudogroup ${\mathcal P}$ generated by $\{  f_{|{\mathbb B}_{r(1-\delta/4)}^{n}} : f \in G \}$ on ${\mathbb B}_{r}^{n}$
such that $f_{j}$ is defined in ${\mathbb B}_{r/2}^{n}$,
$(f_{j})_{|{\mathbb B}_{r/2}^{n}} \not \equiv Id$ for any $j \geq 1$
and $\lim_{j \to \infty} ||f_{j} - Id||_{r/2}=0$.
In particular all the points in
${\mathbb B}_{r/2}^{n}$ except at most a countable union of proper analytic sets are
recurrent for the action of ${\mathcal P}$.
\end{pro}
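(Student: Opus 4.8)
The plan is to deduce the statement from Proposition \ref{pro:estl}. That proposition produces exactly the desired sequence for a finite generator set whose elements have linear parts $\delta/8$-close to the identity, and the only hypothesis that is not automatic in the present situation is precisely this closeness of the linear parts. The heart of the matter is therefore to arrange, by a linear change of coordinates, that a finite set of generators of $G$ has linear parts as close to $Id$ as we wish; this is where the unipotence of $G$ is used in an essential way.

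First I would fix a finite generator set $\mathcal{S}=\mathcal{S}^{-1}$ of $G$, which by hypothesis is non-$p$-pseudo-solvable for $\mathcal{S}$. The group $j^{1}\langle \mathcal{S} \rangle$ is a finitely generated unipotent subgroup of $\mathrm{GL}(n,{\mathbb C})$, so by Kolchin's theorem there is $L \in \mathrm{GL}(n,{\mathbb C})$ conjugating it into the group of upper triangular matrices with $1$'s on the diagonal. Conjugation by the diagonal matrix $D_{t}=\mathrm{diag}(1,t,\ldots,t^{n-1})$ multiplies the $(i,j)$ entry of such a matrix by $t^{j-i}$, hence kills the off-diagonal part as $t \to 0$; since $\mathcal{S}$ is finite, for $h:=L D_{t}$ with $t$ small enough we obtain $\|h^{-1}(j^{1}\phi)h - Id\|_{1} \leq \delta/8$ simultaneously for all $\phi \in \mathcal{S}$. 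The conjugate group $G':=h^{-1} G h$ is again unipotent, and conjugation by the fixed isomorphism $h$ sends commutators to commutators, so it preserves the chains $\mathcal{S}_{p}(j)$ and $G'$ is non-$p$-pseudo-solvable for $\mathcal{S}':=h^{-1}\mathcal{S}h$. Thus $G'$ and $\mathcal{S}'$ satisfy all the hypotheses of Proposition \ref{pro:estl}, which furnishes, for every small $\rho>0$, a sequence $(g_{m})$ in the pseudogroup generated by $\{g_{|{\mathbb B}_{\rho(1-\delta/4)}^{n}}: g \in \mathcal{S}'\}$ on ${\mathbb B}_{\rho}^{n}$, with $g_{m}$ defined and non-trivial on ${\mathbb B}_{\rho/2}^{n}$ and $\|g_{m}-Id\|_{\rho/2}\to 0$.

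It remains to transport this sequence back by setting $f_{m}:=h g_{m} h^{-1}$. As germs the $f_{m}$ lie in $G$, and as maps they are words in $h \mathcal{S}' h^{-1}=\mathcal{S} \subset G$, so they are built from the generators of $\mathcal{P}$. The main obstacle is precisely this transfer of domains: unlike the homothety used inside the proof of Proposition \ref{pro:estl}, the linear map $h$ is not a similarity, so it distorts the round balls ${\mathbb B}_{\rho}^{n}$ into ellipsoids $h({\mathbb B}_{\rho}^{n})$, and one must compare these ellipsoids with the round balls appearing in the statement. I would handle this by choosing $r$ small relative to $\rho$ and to the operator norms $\|h\|,\|h^{-1}\|$, so that the conjugated pseudogroup $h\,\mathcal{P}'\,h^{-1}$ on the ellipsoid $h({\mathbb B}_{\rho}^{n})$ embeds into the pseudogroup $\mathcal{P}$ generated by $\{f_{|{\mathbb B}_{r(1-\delta/4)}^{n}}: f\in G\}$ on ${\mathbb B}_{r}^{n}$, and so that the round ball on which the $f_{m}$ are required to be defined sits inside the corresponding ellipsoid. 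The estimate $\|f_{m}-Id\|_{r/2} \leq \|h\|\,\|h^{-1}\|\,\|g_{m}-Id\|_{\rho/2} \to 0$ then yields the convergence, while $f_{m}\not\equiv Id$ because $g_{m}\not\equiv Id$. Finally, the recurrence of all points outside a countable union of proper analytic sets follows from the existence of such a sequence exactly as in the proof of Proposition \ref{pro:estl}, via the fixed-point sets of the $f_{m}$.
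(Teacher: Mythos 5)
Your overall strategy (Kolchin's theorem plus conjugation by $h=LD_{t}$ with $D_{t}=\mathrm{diag}(1,t,\ldots,t^{n-1})$ to push the linear parts of a generator set $\delta/8$-close to $Id$, then Proposition \ref{pro:estl}) is reasonable, and it is genuinely different from the paper's proof: the paper never conjugates. Instead it uses Kolchin only to conclude that $j^{1}G$ is nilpotent, so that for $m$ with $(j^{1}G)^{(m)}=\{Id\}$ all elements of ${\mathcal S}_{p}(j)$, $j\geq (m-1)p+m$, are \emph{tangent to the identity}; it then sets ${\mathcal S}'={\mathcal S}_{p}((m-1)p+m,\,mp+m)$, checks ${\mathcal S}_{p}(mp+m+j)\subset {\mathcal S}'_{p}(j)$ so that $\langle {\mathcal S}'\rangle$ is still non-$p$-pseudo-solvable for ${\mathcal S}'$, and applies Proposition \ref{pro:estl} to $\langle {\mathcal S}'\rangle$ directly. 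Since the new generators are tangent to the identity, the closeness hypothesis is automatic and there is no transfer of domains whatsoever.

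The gap in your proposal is precisely in the back-transfer step, and it is not fixable by the mechanism you describe. First, ``the $f_{m}$ are words in ${\mathcal S}\subset G$, so they are built from the generators of ${\mathcal P}$'' misreads the convention stated right after the proposition: the generators of ${\mathcal P}$ are only those $f\in G$ that are injective on ${\mathbb B}_{r}^{n}$ \emph{and} satisfy $f({\mathbb B}_{r(1-\delta/4)}^{n})\subset {\mathbb B}_{r}^{n}$. To first order this is a condition on $D_{0}f$, so a letter $\phi\in{\mathcal S}$ with $||D_{0}\phi||_{1}>1/(1-\delta/4)$ is excluded for \emph{every} $r>0$, and there is no reason its restriction to a small ellipsoid belongs to ${\mathcal P}$. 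Second, your two requirements are mutually contradictory: any honest embedding of $h{\mathcal P}'h^{-1}$ into ${\mathcal P}$ needs the relevant ellipsoid inside the ball, $||h||_{1}\,\rho\lesssim r$, while ${\mathbb B}_{r/2}^{n}\subset h({\mathbb B}_{\rho/2}^{n})$ needs $||h^{-1}||_{1}\,r\leq\rho$; multiplying gives $||h||_{1}\,||h^{-1}||_{1}\lesssim 1$, which forces $h$ to be (essentially) a similarity, whereas the condition number of $LD_{t}$ blows up like $t^{-(n-1)}$. No choice of $r$ small relative to $\rho$ and the norms of $h$ can satisfy both. The repair uses a different observation: since $||f_{m}-Id||\to 0$ uniformly on the \emph{fixed} ball ${\mathbb B}_{\rho/(2||h^{-1}||_{1})}^{n}\subset h({\mathbb B}_{\rho/2}^{n})$, for each $r$ below that radius the tail $(f_{m})_{m\geq m_{0}(r)}$ consists of germs of elements of $G$ that are themselves admissible generators of ${\mathcal P}$ (injective on ${\mathbb B}_{r}^{n}$ and mapping ${\mathbb B}_{r(1-\delta/4)}^{n}$ into ${\mathbb B}_{r}^{n}$), so they lie in ${\mathcal P}$ directly, with nontriviality on ${\mathbb B}_{r/2}^{n}$ given by the identity principle; no embedding of conjugated pseudogroups is needed. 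With that replacement your argument closes, but as written the key step would fail.
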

We only consider diffeomorphisms $f \in G$ well-defined, injective in ${\mathbb B}_{r}^{n}$
and such that $f({\mathbb B}_{r(1- \delta/4)}^{n}) \subset {\mathbb B}_{r}^{n}$
as elements of  $\{ f_{|{\mathbb B}_{r}^{n}} : f \in G \}$.
\begin{proof}
Consider a finite generator set ${\mathcal S}$ of $G$.
Since $j^{1} G$ (cf. Definition \ref{def:j1g})
consists of unipotent matrices, it is a group of upper triangular matrices
up to a change of coordinates by Kolchin theorem (cf. \cite{Serre.Lie}[p. 35, Theorem 3*]).
In particular $j^{1} G$ is nilpotent and then solvable. There exists $m \in {\mathbb N}$
such that $(j^{1} G)^{(m)} = \{Id\}$.
The set ${\mathcal S}_{p}(j)$ is contained in $G^{(l)}$  for any $j \geq (l-1) p + l$.
We deduce that all elements of ${\mathcal S}_{p}(j)$ are tangent to the identity for
any $j \geq (m-1) p + m$.
We define ${\mathcal S}' = {\mathcal S}_{p}((m-1) p + m, m p + m)$.
We have ${\mathcal S}_{p}(m p + m) \subset {\mathcal S}_{p}'(0)$ and a simple induction argument proves
${\mathcal S}_{p}(m p + m + j) \subset {\mathcal S}_{p}'(j)$ for any $j \in {\mathbb N}$.

Since $G$ is non-$p$-pseudo-solvable for ${\mathcal S}$, the group $\langle {\mathcal S}' \rangle$ is
not $p$-pseudo-solvable for ${\mathcal S}'$. Since $\langle {\mathcal S}' \rangle$ consists of tangent
to the identity diffeomorphisms, we apply Proposition \ref{pro:estl} to $\langle {\mathcal S}' \rangle$
and ${\mathcal S}'$.
\end{proof}
It is natural that there is no condition on the generators being close to the $Id$
in the previous proposition since it is implicit. Given a group
$G \subset \mathrm{Diff}_{u}({\mathbb C}^{n},0)$ generated by a finite set ${\mathcal S}$
we can suppose that $j^{1} G$ is upper triangular by Kolchin's theorem.
Then we can suppose that $j^{1} {\mathcal S}$ is contained in an arbitrary neighborhood of
$Id$ by making a linear change of coordinates.
Thus the elements of ${\mathcal S}$ can be considered arbitrarily close to the identity map.
\section{Subgroups of unipotent elements}
In this section we show that given a subgroup $G$ of $\diff{}{n}$
(or of its formal completion) of unipotent
elements, $G$ is solvable if and only if it is $p$-pseudo-solvable for some
$p \geq p(n)$ (Theorem \ref{teo:psis}).
\subsection{Formal diffeomorphisms and vector fields}
Let us recap some facts about pro-algebraic groups of formal diffeomorphisms
in the next sections.
This material can be found mostly in \cite{JR:arxivdl} and some points are
expanded in \cite{JR:arxivdl2}.

We interpret elements of $\diff{}{n}$ as operators acting on formal power series.
Let ${\mathfrak m}$ the maximal ideal of the ring ${\mathbb C}[[x_{1},\ldots,x_{n}]]$ of formal power series.
Any $\phi \in \diff{}{n}$ induces an element $\phi_{k}$ of $\mathrm{GL}({\mathfrak m}/{\mathfrak m}^{k+1})$
by composition:
\[
\begin{array}{cclll}
\phi_{k} & : & {\mathfrak m}/{\mathfrak m}^{k+1} & \to & {\mathfrak m}/{\mathfrak m}^{k+1} \\
         &   & g + {\mathfrak m}^{k+1} & \mapsto & g \circ \phi +  {\mathfrak m}^{k+1}
\end{array}
\]
Indeed $\phi_{k}$ is an isomorphism of ${\mathbb C}$-algebras for any $k \in {\mathbb N}$.
We consider the group $D_{k} = \{ \phi_{k} : \phi \in \diff{}{n} \}$, it coincides with
the subgroup of $\mathrm{GL}({\mathfrak m}/{\mathfrak m}^{k+1})$ of isomorphisms of the
${\mathbb C}$-algebra ${\mathfrak m}/{\mathfrak m}^{k+1}$. In particular $D_{k}$ is an algebraic subgroup
of $\mathrm{GL}({\mathfrak m}/{\mathfrak m}^{k+1})$.
\begin{defi}
We define the group of formal diffeomorphisms $\diffh{}{n}$ as the projective limit
$\varprojlim_{k \in {\mathbb N}} D_{k}$.
\end{defi}
Given an element $\phi = (A_{k})_{k \geq 1}$ we have that $(A_{k}(g+ {\mathfrak m}^{k}))_{k \geq 1}$ converges in the
${\mathfrak m}$-adic topology, i.e. the Krull topology,
to an element of $\varprojlim_{k \in {\mathbb N}} {\mathfrak m}/{\mathfrak m}^{k+1}$
for any $g \in {\mathfrak m}$.
Since ${\mathfrak m}$ is equal to the inverse limit $\varprojlim_{k \in {\mathbb N}} {\mathfrak m}/{\mathfrak m}^{k+1}$,
the limit $\lim_{k \to \infty} A_{k} (g+ {\mathfrak m}^{k})$ exists in the Krull topology and belongs to ${\mathfrak m}$.
We denote
\[ \hat{\phi} = (\hat{\phi}_{1}, \ldots, \hat{\phi}_{n}) :=
(\lim_{k \to \infty} A_{k} (x_{1}+ {\mathfrak m}^{k}), \ldots, \lim_{k \to \infty} A_{k} (x_{n}+ {\mathfrak m}^{k})). \]
Since $A_{1}$ is an isomorphism, the linear map $j^{1} \hat{\phi}$ is invertible. In this way we can interpret
a formal diffeomorphism either as an element of $\varprojlim_{k \in {\mathbb N}} D_{k}$ or as
$n$-uple of elements of ${\mathfrak m}$ whose first jet is invertible.
\begin{defi}
We define unipotent formal diffeomorphisms and groups analogously as in
Definitions \ref{def:unidif} and \ref{def:unigr}. We denote by $\diffh{u}{n}$ the subset
of unipotent formal diffeomorphisms.
\end{defi}
\begin{defi}
We define $L_{k}$ as the Lie algebra of derivations of the ${\mathbb C}$-algebra
$ {\mathfrak m}/{\mathfrak m}^{k+1}$. We define the Lie algebra of (singular) formal vector fields $\hat{\mathfrak X} \cn{n}$
as $\varprojlim_{k \in {\mathbb N}} L_{k}$.
\end{defi}
Analogously as for formal diffeomorphisms, given $(B_{k})_{k \geq 1} \in \varprojlim  L_{k}$
the limit $\lim_{k \to \infty} B_{k}(g + {\mathfrak m}^{k})$ is well-defined and belongs to ${\mathfrak m}$ for any
$g \in {\mathfrak m}$. We can identify $(B_{k})_{k \geq 1}$ with the expression
\[ \lim_{k \to \infty} B_{k}(x_{1}+ {\mathfrak m}^{k}) \frac{\partial}{\partial x_{1}} + \ldots +
 \lim_{k \to \infty} B_{k}(x_{n}+ {\mathfrak m}^{k}) \frac{\partial}{\partial x_{n}}. \]
 In this way it makes sense to consider the $k$-jet of a formal vector field.
 \begin{defi}
We say that a formal vector field $X \in \hat{\mathfrak X} \cn{n}$ is nilpotent if the first jet
$j^{1} X$ (or $D_{0} X$) is a nilpotent linear transformation.
We denote  by $\hat{\mathfrak X}_{N} \cn{n}$ the subset of nilpotent formal vector fields.
\end{defi}
\begin{rem}
It is a simple exercise to show that an element $\phi = (A_{k})_{k \geq 1} \in \varprojlim D_{k} = \diffh{}{n}$
is unipotent if and only if $A_{k}$ is unipotent for any $k \in {\mathbb N}$. 

Analogously $X = (B_{k})_{k \geq 1} \in \varprojlim  L_{k} = \hat{\mathfrak X} \cn{n}$ is nilpotent 
if and only if $B_{k}$ is nilpotent for any $k \in {\mathbb N}$. 
\end{rem}
\begin{rem}
The exponential establishes a bijection between nilpotent and unipotent matrices.
Since $L_{k}$ is the Lie algebra of $D_{k}$,
the image of the nilpotent elements of $L_{k}$ by the exponential map
coincides with the set of unipotent elements of $D_{k}$. We remind the reader that
the elements of $L_{k}$ and $D_{k}$ are interpreted as linear maps of ${\mathfrak m}/{\mathfrak m}^{k+1}$.
In particular $\mathrm{exp}: \hat{\mathfrak X}_{N} \cn{n} \to \diffh{u}{n}$ is a bijection.
\end{rem}
\begin{defi}
Given $\phi \in \diffh{u}{n}$ we denote by $\log \phi$ the unique element of $\hat{\mathfrak X}_{N} \cn{n}$
such that $\phi = \mathrm{exp} (\log \phi)$.
\end{defi}
\begin{defi}
\label{def:kn}
We define $\hat{K}_{n}$ as the field of fractions of the local ring
${\mathbb C}[[x_{1},\hdots,x_{n}]]$ of formal power series with complex
coefficients.
\end{defi}
\begin{defi}
Let ${\mathfrak g}$ be a complex Lie subalgebra of $\hat{\mathfrak X} \cn{n} \bigotimes_{\mathbb C} \hat{K}_{n}$.
We define $\dim {\mathfrak g}$ as the dimension of the $\hat{K}_{n}$-vector space
${\mathfrak g} \bigotimes_{\mathbb C} \hat{K}_{n}$.
We define ${\mathcal M} ({\mathfrak g}) = \{ g \in \hat{K}_{n}: X(g) \equiv 0 \ \forall X \in {\mathfrak g} \}$, it
is the field of formal meromorphic first integrals of ${\mathfrak g}$.
\end{defi}
\subsection{Pro-algebraic groups}
\label{sec:proalg}
Let us introduce the analogue of algebraic matrix groups for subgroups of $\diffh{}{n}$.
\begin{defi}
Let $G$ be a subgroup of $\diffh{}{n}$. We define $G_{k} = \overline{\{ \phi_{k} : \phi \in G \}}^{z}$
where the Zariski-closure is considered in $\mathrm{GL}({\mathfrak m}/{\mathfrak m}^{k+1})$.
We define the pro-algebraic closure (also Zariski-closure) $\overline{G}^{z}$ of $G$ as the
inverse limit $\varprojlim_{k \in {\mathbb N}} G_{k}$. 
\end{defi}
\begin{rem}
The group $D_{k}$ is algebraic for $k \in {\mathbb N}$. As a consequence
$G_{k}$ is a subgroup of $D_{k}$ for any $k \in {\mathbb N}$ and then
$\overline{G}^{z}$ is a subgroup of $\diffh{}{n} = \varprojlim D_{k}$. In particular the equality
\[ \overline{G}^{z} = \{ \phi \in \diffh{}{n} : \phi_{k} \in G_{k} \ \forall k \in {\mathbb N} \}  \]
holds.
\end{rem}
\begin{rem}
The group $\overline{G}^{z}$ is
closed in the ${\mathfrak m}$-adic topology (the Krull topology)
by construction.
\end{rem}
\begin{defi}
We say that a subgroup $G$ of $\diffh{}{n}$ is pro-algebraic if $G = \overline{G}^{z}$.
\end{defi}
Notice that in \cite{JR:arxivdl} we use the notation $\overline{G}^{(0)}$ instead of $\overline{G}^{z}$.
Since in this paper we use three notions of closure we stress in the new notation that we are referring
to the Zariski-closure.

Let us consider subgroups of formal diffeomorphisms contained in $\diffh{u}{n}$.
The pro-algebraic theory is valid also for general subgroups of $\diffh{}{n}$
but we just apply it in this paper to the unipotent case. Let us introduce some
basic properties of unipotent groups.
\begin{lem}
\label{lem:bas}
Let $G$ be a unipotent subgroup of $\diffh{}{n}$. We have
\begin{itemize}
\item $\overline{G}^{z}$ is contained in $\diffh{u}{n}$.
\item The derived lengths of $G$ and $\overline{G}^{z}$ coincide. In particular $G$ is solvable if and only
if $\overline{G}^{z}$ is solvable.
\item The subset $L(\overline{G}^{z}):= \{ \log \phi : \phi \in \overline{G}^{z} \}$ is a Lie algebra of formal
nilpotent vector fields. Moreover the map $\mathrm{exp} : L(\overline{G}^{z}) \to \overline{G}^{z}$ is a bijection.
\item The derived lengths of $L(\overline{G}^{z})$ and $\overline{G}^{z}$ coincide. In particular $G$ is solvable if and only
if $L(\overline{G}^{z})$ is solvable.
\end{itemize}
\end{lem}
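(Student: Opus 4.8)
The plan is to reduce everything to the finite levels $D_{k}$ and then pass to the inverse limit, using the identification $\overline{G}^{z} = \varprojlim_{k} G_{k}$ with $G_{k} = \overline{\{ \phi_{k} : \phi \in G \}}^{z} \subset D_{k}$ provided by the Remark following the definition of $\overline{G}^{z}$. First I would prove the first item. The set of unipotent elements of $\mathrm{GL}({\mathfrak m}/{\mathfrak m}^{k+1})$ is Zariski-closed, since it is cut out by the condition $(A - Id)^{N} = 0$ with $N = \dim_{\mathbb C} {\mathfrak m}/{\mathfrak m}^{k+1}$. As $G$ is unipotent, the set $\{ \phi_{k} : \phi \in G \}$ consists of unipotent matrices, hence so does its Zariski-closure $G_{k}$. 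By the remark characterizing unipotent elements of $\varprojlim D_{k}$ levelwise, every $\phi \in \overline{G}^{z}$ is then unipotent, i.e. $\overline{G}^{z} \subset \diffh{u}{n}$.

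Next I would set up the levels. Since we work in characteristic $0$, each $G_{k}$ is a \emph{connected} unipotent algebraic group; therefore ${\mathfrak g}_{k} := \log (G_{k})$ equals the Lie algebra of $G_{k}$, it is a nilpotent Lie subalgebra of $L_{k}$, the matrix exponential restricts to a bijection $\exp : {\mathfrak g}_{k} \to G_{k}$, and the derived series of $G_{k}$ corresponds via $\log$ to that of ${\mathfrak g}_{k}$, so that $\ell (G_{k}) = \ell ({\mathfrak g}_{k})$. The projections $D_{k+1} \to D_{k}$ are surjective on the $G_{k}$ and intertwine $\log$, hence restrict to surjective Lie-algebra homomorphisms ${\mathfrak g}_{k+1} \to {\mathfrak g}_{k}$.

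For the third item, $\overline{G}^{z} \subset \diffh{u}{n}$ guarantees that $\log$ is defined on $\overline{G}^{z}$ and that $\exp : L(\overline{G}^{z}) \to \overline{G}^{z}$ is a bijection, being the restriction of the global bijection $\exp : \hat{\mathfrak X}_{N} \cn{n} \to \diffh{u}{n}$ to $L(\overline{G}^{z})$ and its image. Compatibility of $\log$ with the projections yields $L(\overline{G}^{z}) = \varprojlim_{k} {\mathfrak g}_{k}$, which, as an inverse limit of Lie algebras along Lie-algebra homomorphisms, is a Lie algebra of nilpotent formal vector fields. Finally I would handle the derived lengths. Because the bonding maps are surjective, the projections $\overline{G}^{z} \to G_{k}$ and $L(\overline{G}^{z}) \to {\mathfrak g}_{k}$ are surjective and an element of the inverse limit is trivial if and only if all its components are; this gives $\ell(\overline{G}^{z}) = \sup_{k} \ell(G_{k})$ and $\ell(L(\overline{G}^{z})) = \sup_{k} \ell({\mathfrak g}_{k})$, and the fourth item follows from $\ell(G_{k}) = \ell({\mathfrak g}_{k})$. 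For the second item, the assignment $\phi \mapsto \phi_{k}$ is a group anti-homomorphism, so $\{ \phi_{k} : \phi \in G \}$ has derived length at most $\ell(G)$, whence $\ell(G_{k}) = \ell(\overline{\{\phi_{k}\}}^{z}) \leq \ell(G)$ because passing to the Zariski-closure inside a linear algebraic group does not change the derived length; thus $\ell(\overline{G}^{z}) = \sup_{k} \ell(G_{k}) \leq \ell(G) \leq \ell(\overline{G}^{z})$ and all three are equal.

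The main obstacle is not any single estimate but assembling the characteristic-zero algebraic-group inputs — connectedness of unipotent groups, the correspondence between the derived series of a connected unipotent group and that of its Lie algebra, and the invariance of derived length under Zariski-closure — together with the inverse-limit bookkeeping ensuring $\ell(\varprojlim) = \sup_{k} \ell$; all of these are available in \cite{JR:arxivdl}.
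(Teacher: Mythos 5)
Your proof is correct, but there is no argument in the paper to compare it with: the paper's entire proof of this lemma is the citation ``All these results are proved in \cite{JR:arxivdl}. They correspond to Lemmas 4, 1 and Propositions 2, 3 respectively.'' What you wrote is a self-contained reconstruction by levelwise reduction and inverse-limit bookkeeping, which is the natural route and presumably close in spirit to the cited proofs. Your individual steps check out: unipotent matrices form a Zariski-closed subset of $\mathrm{GL}({\mathfrak m}/{\mathfrak m}^{k+1})$, so each $G_{k}$ is unipotent and item one follows from the paper's levelwise characterization of unipotence; the bonding maps $G_{k+1} \to G_{k}$ are indeed surjective (the image of a morphism of algebraic groups is a closed subgroup, and the preimage of $G_{k}$ is a closed subgroup containing $\{ \phi_{k+1} : \phi \in G \}$, which gives both inclusions), hence the projections from the limits are surjective, yielding $\ell(\overline{G}^{z}) = \sup_{k} \ell(G_{k})$ and $\ell(L(\overline{G}^{z})) = \sup_{k} \ell({\mathfrak g}_{k})$; together with the levelwise identity $\ell(G_{k}) = \ell({\mathfrak g}_{k})$ and the chain $\sup_{k} \ell(G_{k}) \leq \ell(G) \leq \ell(\overline{G}^{z})$, this closes items two and four. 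You also handled correctly two points that are easy to get wrong: $\phi \mapsto \phi_{k}$ is an anti-homomorphism (composing with inversion makes it a homomorphism onto the same image, so derived lengths still only decrease), and derived length is unchanged under Zariski closure --- the Zariski analogue of the paper's Lemma \ref{lem:solc}, proved by the same $\overline{T}^{(j)} \subset \overline{T^{(j)}}$ argument. The trade-off is clear: your route makes explicit the characteristic-zero inputs (connectedness of unipotent algebraic groups, bijectivity of $\exp : \mathrm{Lie}(U) \to U$, and $\mathrm{Lie}([U,U]) = [\mathrm{Lie}(U),\mathrm{Lie}(U)]$ for connected $U$), which is transparent but amounts to re-deriving the content of the results the paper outsources; the paper's citation buys brevity at the cost of self-containedness.
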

\begin{proof}
All these results are proved in \cite{JR:arxivdl}. They correspond to Lemmas 4, 1 and Propositions 2, 3
respectively.
\end{proof}
\begin{defi}
We say that $L(\overline{G}^{z})$ is the Lie algebra of the pro-algebraic group $\overline{G}^{z}$.
\end{defi}
\begin{rem}
\label{rem:krull}
Since $\overline{G}^{z}$ is
closed in the  Krull topology, hence
$L(\overline{G}^{z})$ is also closed in the Krull topology.
\end{rem}
The algebraic properties of unipotent subgroups of $\diff{}{n}$ are codified in the Lie algebra of
its pro-algebraic closure. The next result is Theorem 6 of \cite{JR:arxivdl},
it displays part of the structure of a Lie algebra of formal vector fields.
\begin{pro}
\label{pro:structure}
Let ${\mathfrak g}$  be a non-trivial complex Lie subalgebra of $\hat{\mathfrak X} \cn{n} \bigotimes_{\mathbb C} \hat{K}_{n}$.
There exist ideals ${\mathfrak h}$, ${\mathfrak j}$ of ${\mathfrak g}$ such that
\begin{itemize}
\item ${\mathfrak j} \subset {\mathfrak h}$ and ${\mathfrak g}/{\mathfrak h}$, ${\mathfrak h}/{\mathfrak j}$ are
abelian Lie algebras.
\item $\dim {\mathfrak g} = \dim {\mathfrak h}$ and $m:= \dim {\mathfrak h} - \dim {\mathfrak j} >0$.
\item ${\mathfrak g}/{\mathfrak h}$ is isomorphic to a complex Lie algebra of $m \times m$ matrices with
coefficients in ${\mathcal M}({\mathfrak g})$.
\item Let $\{ X_{1}, \ldots, X_{m} \}$ be a base of
$({\mathfrak g} \bigotimes_{\mathbb C} \hat{K}_{n})/ ({\mathfrak j} \bigotimes_{\mathbb C} \hat{K}_{n})$
contained in ${\mathfrak h}$. Then given $Z \in {\mathfrak h}$ there exists unique
$h_{1}, \ldots, h_{m} \in{\mathcal M}({\mathfrak g})$ such that
$Z - h_{1} X_{1} - \ldots - h_{m} X_{m} \in {\mathfrak j} \bigotimes_{\mathbb C} \hat{K}_{n}$.
\end{itemize}
\end{pro}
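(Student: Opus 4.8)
The plan is to work throughout with the field $K := {\mathcal M}({\mathfrak g})$ of formal meromorphic first integrals and with the rational span $V := {\mathfrak g} \bigotimes_{\mathbb C} \hat{K}_{n}$, a $\hat{K}_{n}$-vector space of dimension $d := \dim {\mathfrak g}$. The engine of the whole argument is that first integrals behave as scalars for the bracket: if $f \in K$ and $X, Y \in {\mathfrak g}$ then $[Y, fX] = Y(f) X + f[Y,X] = f[Y,X]$, since $Y(f) = 0$. Two consequences will be used repeatedly. First, the rational span ${\mathfrak a} \bigotimes_{\mathbb C} \hat{K}_{n}$ of any ideal ${\mathfrak a}$ of ${\mathfrak g}$ is stable under $\mathrm{ad}_{Y}$ for every $Y \in {\mathfrak g}$: expanding $[Y, \sum_{i} f_{i} Z_{i}]$ with $Z_{i} \in {\mathfrak a}$ leaves every term in ${\mathfrak a} \bigotimes_{\mathbb C} \hat{K}_{n}$. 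Second, if ${\mathfrak a} \subseteq {\mathfrak g}$ is an ideal with $\dim {\mathfrak a} = \dim {\mathfrak g}$, then ${\mathcal M}({\mathfrak a}) = {\mathcal M}({\mathfrak g})$, because every $Y \in {\mathfrak g}$ is an $\hat{K}_{n}$-combination of elements of ${\mathfrak a}$, so any function killed by ${\mathfrak a}$ is killed by ${\mathfrak g}$.

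Next I would construct the filtration. Since $\dim {\mathfrak g}^{(k)}$ is non-increasing in $k$, I would produce a full-rank ideal ${\mathfrak h}$ with ${\mathfrak g}^{(1)} \subseteq {\mathfrak h}$ (so that ${\mathfrak g}/{\mathfrak h}$ is abelian, being a quotient of ${\mathfrak g}/{\mathfrak g}^{(1)}$) and $\dim {\mathfrak h} = \dim {\mathfrak g}$; the simplest case is ${\mathfrak h} = {\mathfrak g}$ when ${\mathfrak g}^{(1)}$ is not rationally full, and otherwise one descends the derived series to the first level where the rational rank drops. I would then set ${\mathfrak j}$ to be the rational saturation ${\mathfrak h} \cap ([{\mathfrak h},{\mathfrak h}] \bigotimes_{\mathbb C} \hat{K}_{n})$, which is an ideal of ${\mathfrak g}$ by the ad-stability above (note $[{\mathfrak h},{\mathfrak h}]$ is an ideal of ${\mathfrak g}$), satisfies $[{\mathfrak h},{\mathfrak h}] \subseteq {\mathfrak j}$ so that ${\mathfrak h}/{\mathfrak j}$ is abelian, and has $\dim {\mathfrak j} = \dim [{\mathfrak h},{\mathfrak h}]$; hence $m = \dim {\mathfrak h} - \dim {\mathfrak j} > 0$ exactly when ${\mathfrak h}$ is not rationally perfect. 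To make the representation of the third bullet faithful one may further have to replace ${\mathfrak h}$ by the centralizer ideal $\{ Y \in {\mathfrak g} : [Y, {\mathfrak h}] \subseteq {\mathfrak j} \bigotimes_{\mathbb C} \hat{K}_{n} \}$.

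With such a filtration in place the last two bullets become short computations. Set $W := ({\mathfrak h} \bigotimes_{\mathbb C} \hat{K}_{n})/({\mathfrak j} \bigotimes_{\mathbb C} \hat{K}_{n})$, an $m$-dimensional $\hat{K}_{n}$-space with basis $X_{1}, \ldots, X_{m}$ chosen inside ${\mathfrak h}$. Because $[{\mathfrak h},{\mathfrak h}] \subseteq {\mathfrak j}$, every $Z' \in {\mathfrak h}$ acts on $W$ as a pure derivation $\sum_{i} f_{i} X_{i} \mapsto \sum_{i} Z'(f_{i}) X_{i}$. For the fourth bullet, write $Z \equiv \sum_{i} h_{i} X_{i}$ modulo ${\mathfrak j} \bigotimes_{\mathbb C} \hat{K}_{n}$ with $h_{i} \in \hat{K}_{n}$; applying $\mathrm{ad}_{Z'}$ for $Z' \in {\mathfrak h}$ and using $[Z',Z] \in [{\mathfrak h},{\mathfrak h}] \subseteq {\mathfrak j}$ forces $\sum_{i} Z'(h_{i}) X_{i} \equiv 0$, whence $Z'(h_{i}) = 0$ for all $Z' \in {\mathfrak h}$, i.e. $h_{i} \in {\mathcal M}({\mathfrak h}) = {\mathcal M}({\mathfrak g}) = K$. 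For the third bullet, writing $[Y, X_{i}] \equiv \sum_{k} c_{ik}(Y) X_{k}$ and applying $\mathrm{ad}_{Z'}$ together with the Jacobi identity (noting $[Z',Y] \in {\mathfrak h}$ and $[{\mathfrak h},{\mathfrak h}] \subseteq {\mathfrak j}$, so all Jacobi terms vanish modulo ${\mathfrak j} \bigotimes_{\mathbb C} \hat{K}_{n}$) yields $Z'(c_{ik}(Y)) = 0$; thus $c_{ik}(Y) \in K$ and $Y \mapsto (c_{ik}(Y))$ is the desired homomorphism from ${\mathfrak g}/{\mathfrak h}$ into the Lie algebra of $m \times m$ matrices over $K$, with abelian image because ${\mathfrak g}/{\mathfrak h}$ is abelian.

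I expect the genuine obstacle to be the construction step, not these computations: one must produce ${\mathfrak h}$ and ${\mathfrak j}$ meeting all the numerical constraints at once — ${\mathfrak g}/{\mathfrak h}$ abelian, $\dim {\mathfrak h} = \dim {\mathfrak g}$, $m > 0$, and faithfulness of the representation — and the naive derived-series choice fails to drop rank precisely when ${\mathfrak g}$ is rationally perfect, that is when $\dim {\mathfrak g}^{(k)} = d$ for all $k$. Handling that case requires extracting an abelian layer from a minimal rationally-saturated ideal and checking that its centralizer ideal is full rank and still yields an abelian quotient ${\mathfrak g}/{\mathfrak h}$; this is where the finer structure theory of Lie algebras of formal vector fields, and ultimately the classification in \cite{JR:arxivdl}, must be invoked.
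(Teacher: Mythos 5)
Your computational layer is fine, and it reproduces the only information the paper itself supplies: the paper gives no proof of Proposition \ref{pro:structure} at all --- it is imported verbatim as Theorem 6 of \cite{JR:arxivdl} --- and Remark \ref{rem:precise} records the construction from that reference, namely ${\mathfrak j} = ({\mathfrak g}^{(a)} \bigotimes_{\mathbb C} \hat{K}_{n}) \cap {\mathfrak g}$ for the \emph{first} $a \in \{1,2\}$ with $\dim {\mathfrak g}^{(a)} < \dim {\mathfrak g}$, together with ${\mathfrak h} = \ker M$ for the matrix morphism $M$. Your observations that first integrals act as scalars on brackets, that rational saturations of ideals are $\mathrm{ad}$-stable ideals of ${\mathfrak g}$, and that the coefficients $h_{i}$ and $c_{ik}(Y)$ land in ${\mathcal M}({\mathfrak g})$ are correct and coincide with the routine half of that construction.

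The genuine gap is the existence of the pair $({\mathfrak h},{\mathfrak j})$, and it is not a deferrable technicality: the conclusion of the proposition is \emph{equivalent} to the assertion you leave unproved. Indeed, if ideals as in the statement exist, then ${\mathfrak g}^{(1)} \subset {\mathfrak h}$ (abelian quotient), hence ${\mathfrak g}^{(2)} \subset [{\mathfrak h},{\mathfrak h}] \subset {\mathfrak j}$ and $\dim {\mathfrak g}^{(2)} \leq \dim {\mathfrak j} = \dim {\mathfrak g} - m < \dim {\mathfrak g}$; conversely, a drop at $a \in \{1,2\}$ makes your construction (equivalently the paper's) go through. So the entire content of the theorem is the two-step dimension drop, which you explicitly relegate to ``the finer structure theory''. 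Your localization of the difficulty is also off: the construction fails not only for ``rationally perfect'' algebras but whenever the first drop occurs at level $3$ or later, and ruling that out is exactly the hard part. Worse, in the stated generality the claim is simply false, so no blind completion of your sketch can exist: take ${\mathfrak g} = {\mathbb C}\, \partial_{x} \oplus {\mathbb C}\, x \partial_{x} \oplus {\mathbb C}\, x^{2} \partial_{x} \cong \mathfrak{sl}_{2}$, a perfect complex Lie subalgebra of $\hat{\mathfrak X}({\mathbb C},0) \bigotimes_{\mathbb C} \hat{K}_{1}$ (note $\partial_{x} = x^{-1} \cdot x\partial_{x}$ lies in the rational span); perfectness forces ${\mathfrak h} = {\mathfrak g}$, then ${\mathfrak j} = {\mathfrak g}$, so $m = 0$, contradicting the second bullet. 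The proposition is thus true only under the hypothesis actually carried by Theorem 6 of \cite{JR:arxivdl} and tacitly in force in every application in this paper (solvability of ${\mathfrak g}$; note the section title and that the extension-algebra construction only ever feeds solvable algebras into it), and establishing $\dim {\mathfrak g}^{(2)} < \dim {\mathfrak g}$ for solvable Lie algebras of formal vector fields is a substantive structural theorem about vector fields --- not something recoverable from the Zorn-type minimality or centralizer arguments you outline.
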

\begin{rem}
\label{rem:precise}
Let us be a little be more precise, the details can be found in the proof of Theorem 6 of \cite{JR:arxivdl}.
The ideal ${\mathfrak j}$ is equal to $({\mathfrak j} \bigotimes_{\mathbb C} \hat{K}_{n}) \cap {\mathfrak g}$.
Moreover we have
\[ \dim {\mathfrak j} = \dim {\mathfrak g}^{(a)}, \ {\mathfrak j} \bigotimes_{\mathbb C} \hat{K}_{n} =
{\mathfrak g}^{(a)} \bigotimes_{\mathbb C} \hat{K}_{n} \ \mathrm{and} \
{\mathfrak j} = ({\mathfrak g}^{(a)} \bigotimes_{\mathbb C} \hat{K}_{n}) \cap {\mathfrak g} \]
for the first $a \in \{1,2\}$
such that $\dim {\mathfrak g}^{(a)} < \dim  {\mathfrak g}$. In particular
${\mathfrak g}/{\mathfrak j}$ is abelian if and only if $a=1$.

Given $Z \in {\mathfrak g}$ we define
\begin{equation}
\label{equ:matrix}
M (Z) =
\left(
\begin{array}{cccc}
X_{1}(h_{1}) & X_{1}(h_{2}) & \hdots & X_{1}(h_{m}) \\
X_{2}(h_{1}) & X_{2}(h_{2}) & \hdots & X_{2}(h_{m}) \\
\vdots & \vdots & & \vdots \\
X_{m}(h_{1}) & X_{m}(h_{2}) & \hdots & X_{m}(h_{m})
\end{array}
\right).
\end{equation}
where $Z - h_{1} X_{1} - \ldots - h_{m} X_{m} \in {\mathfrak j} \bigotimes_{\mathbb C} \hat{K}_{n}$.
The map $M: Z \mapsto M (Z)$ is a morphism of complex Lie algebras from ${\mathfrak g}$
to the Lie algebra of $m \times m$ matrices with coefficients in ${\mathcal M}({\mathfrak g})$.
The kernel of $M$ is equal to ${\mathfrak h}$.
Thus $M: {\mathfrak g}/{\mathfrak h} \to M({\mathfrak g})$ is an isomorphism of complex
Lie algebras.
\end{rem}
\subsection{Structure of a solvable Lie algebra}
In this section we improve Proposition  \ref{pro:structure} and provide a finer classification
of Lie algebras of formal vector fields.

Let $\tilde{\mathcal L}_{1}$ be a solvable
Lie subalgebra of $\hat{\mathfrak X} \cn{n} \bigotimes_{\mathbb C} \hat{K}_{n}$.
We denote
${\mathcal M}_{1}= {\mathcal M} (\tilde{\mathcal L}_{1})$
and ${\mathcal L}_{1} =\tilde{\mathcal L}_{1} \bigotimes_{\mathbb C} {\mathcal M}_{1}$.
Then ${\mathcal L}_{1}$ is a Lie subalgebra of $\hat{\mathfrak X} \cn{n} \bigotimes_{\mathbb C} \hat{K}_{n}$
such that $\ell ({\mathcal L}_{1})= \ell (\tilde{\mathcal L}_{1})$.
Notice that ${\mathcal M}_{1} = {\mathcal M}({\mathcal L}_{1})$.
Our goal is constructing a Lie subalgebra ${\mathcal L}$ of $\hat{\mathfrak X} \cn{n} \bigotimes_{\mathbb C} \hat{K}_{n}$
that contains ${\mathcal L}_{1}$, it has similar algebraic properties as ${\mathcal L}_{1}$
and also
a simpler structure.
In particular we provide a unique decomposition for the elements of ${\mathcal L}$.

We denote ${\mathfrak g} = {\mathcal L}_{1}$. Suppose ${\mathcal L}_{1} \neq 0$.
Consider the notations provided by
Proposition \ref{pro:structure}.
We denote $b_{1} = m = \dim {\mathfrak g} - \dim {\mathfrak j}$.
Since $M$ is ${\mathcal M}_{1}$-linear,
the Lie algebra $M ({\mathcal L}_{1})$ is a ${\mathcal M}_{1}$-vector space of
dimension $e_{1}$ less or equal than $b_{1}^{2}$.
There exist elements $W_{1}^{1}, \ldots, W_{e_{1}}^{1}$ in ${\mathcal L}_{1}$ such that
$\{ M_{W_{1}^{1}}, \ldots, M_{W_{e_{1}}^{1}} \}$ is a basis of $M({\mathcal L}_{1})$.
As a consequence given $X \in {\mathcal L}_{1}$
there exist unique $f_{1}^{1}, \ldots, f_{e_{1}}^{1} \in {\mathcal M}_{1}$
such that
$M (X - \sum_{j=1}^{e_{1}} f_{j}^{1} W_{j}^{1})=0$.
By construction $X - \sum_{j=1}^{e_{1}} f_{j}^{1} W_{j}^{1}$ belongs to ${\mathcal L}_{1}$
and since its image by $M$ vanishes, it belongs to ${\mathfrak h}$.
Hence there exist unique $h_{1}^{1}$, $\ldots$, $h_{b_{1}}^{1} \in {\mathcal M}_{1}$ such that
\[ X  -  \sum_{j=1}^{e_{1}} f_{j}^{1} W_{j}^{1} - \sum_{k=1}^{b_{1}} h_{k}^{1} X_{k}^{1}  \]
belongs to $({\mathfrak j} \bigotimes_{\mathbb C} \hat{K}_{n}) \cap {\mathfrak g}$
or equivalently to ${\mathfrak j}$.
We define $\tilde{\mathcal L}_{2} = {\mathfrak j}$, ${\mathcal M}_{2} = {\mathcal M}(\tilde{\mathcal L}_{2})$
and ${\mathcal L}_{2} = \tilde{\mathcal L}_{2} \bigotimes_{\mathbb C} {\mathcal M}_{2}$.
\begin{defi}
\label{def:mbasis}
We say that $\{Z_{1}^{1}, \hdots, Z_{c_{1}}^{1}\}$ is a ${\mathcal M}'$-{\it basis}
of ${\mathcal L}_{1}$ if it is a subset of ${\mathcal L}_{1}$ such that the
classes of its elements determine a basis of the ${\mathcal M}_{1}$-vector space
${\mathcal L}_{1}/\tilde{\mathcal L}_{2}$.
\end{defi}
\begin{rem}
\label{rem:mbasis}
We define $c_{1} =b_{1} + e_{1}$
and $Z_{j}^{1} = W_{j}^{1}$, $Z_{k+e_{1}}^{1} = X_{k}^{1}$ for
$1 \leq j \leq e_{1}$ and $1 \leq k \leq b_{1}$.
The set $\{Z_{1}^{1}, \ldots, Z_{c_{1}}^{1} \}$ is a
${\mathcal M}'$-{\it basis} of ${\mathcal L}_{1}$.
In particular we proved
\begin{equation}
\label{equ:dimm}
\dim_{{\mathcal M}_{1}} {\mathcal L}_{1}/\tilde{\mathcal L}_{2} = \dim_{{\mathcal M}_{1}} M({\mathcal L}_{1}) +
\dim {\mathcal L}_{1} - \dim {\mathcal L}_{2} \leq n^{2} + n .
\end{equation}
\end{rem}
\begin{rem}
The space ${\mathcal L}_{1}/\tilde{\mathcal L}_{2}$ is a ${\mathcal M}_{1}$-vector space of dimension
$b_{1} +e_{1}$ whereas
$({\mathcal L}_{1} \bigotimes_{\mathbb C} \hat{K}_{n})/(\tilde{\mathcal L}_{2} \bigotimes_{\mathbb C} \hat{K}_{n})$
is a $\hat{K}_{n}$-vector space of dimension $b_{1}$.
The different dimensional type is due to the choice of the base field.
\end{rem}
The Lie algebra $\tilde{\mathcal L}_{2}$ is solvable since  $\tilde{\mathcal L}_{2}  \subset {\mathcal L}_{1}$.
Moreover ${\mathcal L}_{2}$ is a solvable Lie algebra since $\ell (\tilde{\mathcal L}_{2}) = \ell ({\mathcal L}_{2})$.
The property $[{\mathcal L}_{1}, \tilde{\mathcal L}_{2}] \subset \tilde{\mathcal L}_{2}$
implies that the elements in ${\mathcal L}_{1}$ preserve the first integrals of $\tilde{\mathcal L}_{2}$.
More precisely, we have
\begin{equation}
\label{equ:ifi}
0= [Z,X](g) = Z(X(g)) - X(Z(g)) =  Z(X(g))   .
\end{equation}
for all $X \in {\mathcal L}_{1}$, $Z \in \tilde{\mathcal L}_{2}$ and $g \in  {\mathcal M}_{2}$.
We deduce $X({\mathcal M}_{2}) \subset {\mathcal M}_{2}$ for any
$X \in {\mathcal L}_{1}$.
In particular we obtain
$[{\mathcal L}_{1}, {\mathcal L}_{2} ] \subset {\mathcal L}_{2}$.
Since ${\mathcal L}_{1}^{(2)} \subset {\mathcal L}_{2}$,
${\mathcal L}_{1} + {\mathcal L}_{2}$ is a solvable Lie algebra
such that $({\mathcal L}_{1} + {\mathcal L}_{2})^{(2)} \subset {\mathcal L}_{2}$.

 Since ${\mathcal L}_{2}$ is solvable we can repeat the process to obtain
a Lie algebra $\tilde{\mathcal L}_{3}$ if ${\mathcal L}_{2} \neq 0$. Then we define
${\mathcal M}_{3}={\mathcal M}(\tilde{\mathcal L}_{3})$ and
${\mathcal L}_{3}= \tilde{\mathcal L}_{3} \bigotimes_{\mathbb C} {\mathcal M}_{3}$.
We obtain a ${\mathcal M}'$-basis $\{ Z_{1}^{2} , \ldots, Z_{c_{2}}^{2} \}$ for ${\mathcal L}_{2}$ analogously
as for ${\mathcal L}_{1}$.
Given any $X \in {\mathcal L}_{2}$ there exist unique
$\gamma_{1}^{2}, \ldots, \gamma_{c_{2}}^{2} \in {\mathcal M}_{2}$
such that $X - \sum_{j=1}^{c_{2}} \gamma_{j}^{2} Z_{j}^{2} \in \tilde{\mathcal L}_{3}$.
We continue to obtain Lie algebras
\[ {\mathcal L}_{1}, \hdots, {\mathcal L}_{m}, {\mathcal L}_{m+1}=0,  {\mathcal L}_{m+2}=0, \ldots \]
where ${\mathcal L}_{m} \neq 0$.
Notice that such $m$ exists since $\dim {\mathcal L}_{j+1} < \dim {\mathcal L}_{j}$ if
${\mathcal L}_{j} \neq 0$.
\begin{defi}
We say that ${\mathcal L}= \sum_{j=1}^{\infty} {\mathcal L}_{j}= {\mathcal L}_{1} + \ldots + {\mathcal L}_{m}$
is the {\it extension Lie algebra} associated to $\tilde{\mathcal L}_{1}$.
We define ${\mathcal L}_{j}=0$ for $j \geq 1$ and ${\mathcal L}=0$ if $\tilde{\mathcal L}_{1}=0$.
\end{defi}
\begin{rem}
The sum $\sum_{j=1}^{\infty} {\mathcal L}_{j}$ is not a direct sum unless
${\mathcal L}_{j} =0$ for $j \geq 2$. We did not show that  ${\mathcal L}$
is a Lie algebra yet, it will be done in Proposition \ref{pro:sla}.
\end{rem}
\begin{defi}
We denote ${\mathcal M}_{j}= {\mathcal M}({\mathcal L}_{j})$.
\end{defi}
By construction we obtain ${\mathcal L}_{j}^{(2)} \subset {\mathcal L}_{j+1}$ and
$[{\mathcal L}_{j}, {\mathcal L}_{j+1}] \subset {\mathcal L}_{j+1}$ for any $1 \leq j \leq m$.
Next we generalize the latter property.
\begin{lem}
Let $1 \leq j < k \leq m+1$. We have
$[{\mathcal L}_{j}, {\mathcal L}_{k}] \subset {\mathcal L}_{k}$.
\end{lem}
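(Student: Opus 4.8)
The plan is to prove the statement by downward induction on $k$ for fixed $j$, starting from $k=m+1$ and decreasing to $k=j+1$. The base case $k=m+1$ is trivial since ${\mathcal L}_{m+1}=0$, so $[{\mathcal L}_{j},{\mathcal L}_{m+1}]=0\subset{\mathcal L}_{m+1}$. The known facts I would lean on are the two relations established immediately before the lemma: ${\mathcal L}_{j}^{(2)}\subset{\mathcal L}_{j+1}$ and $[{\mathcal L}_{j},{\mathcal L}_{j+1}]\subset{\mathcal L}_{j+1}$ for every $1\leq j\leq m$. The case $k=j+1$ is therefore exactly one of these, giving both a base foothold and a sanity check.

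For the inductive step I would assume $[{\mathcal L}_{j},{\mathcal L}_{k+1}]\subset{\mathcal L}_{k+1}$ (with $k+1\leq m+1$) and aim to deduce $[{\mathcal L}_{j},{\mathcal L}_{k}]\subset{\mathcal L}_{k}$. The key idea is to use the ${\mathcal M}'$-basis structure of ${\mathcal L}_{k}$: any $X\in{\mathcal L}_{k}$ decomposes as $X=\sum_{i}\gamma_{i}Z_{i}^{k}+R$ with $\gamma_{i}\in{\mathcal M}_{k}$, $Z_{i}^{k}$ the chosen generators of ${\mathcal L}_{k}$, and $R\in\tilde{\mathcal L}_{k+1}\subset{\mathcal L}_{k+1}$. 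For $W\in{\mathcal L}_{j}$, I would expand
\[
[W,X]=\sum_{i}\gamma_{i}[W,Z_{i}^{k}]+\sum_{i}W(\gamma_{i})Z_{i}^{k}+[W,R],
\]
using the Leibniz-type rule $[W,\gamma Z]=\gamma[W,Z]+W(\gamma)Z$ for vector fields. The term $[W,R]$ lies in ${\mathcal L}_{k+1}\subset{\mathcal L}_{k}$ by the inductive hypothesis. The first sum $\sum_{i}\gamma_{i}[W,Z_{i}^{k}]$ must be handled by showing each $[W,Z_{i}^{k}]\in{\mathcal L}_{k}$; since $Z_{i}^{k}\in{\mathcal L}_{k}$ this is really the same claim one rung down and would need to be folded into the induction, or absorbed by noting the $\gamma_{i}[W,Z_{i}^{k}]$ contributions stay in the ${\mathcal M}_{k}$-span together with the middle term. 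The crux is the middle term $\sum_{i}W(\gamma_{i})Z_{i}^{k}$: this stays in ${\mathcal L}_{k}$ precisely when $W(\gamma_{i})\in{\mathcal M}_{k}$, i.e. when $W$ preserves the first integrals ${\mathcal M}_{k}={\mathcal M}({\mathcal L}_{k})$.

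The main obstacle is therefore establishing $W({\mathcal M}_{k})\subset{\mathcal M}_{k}$ for all $W\in{\mathcal L}_{j}$ with $j<k$. This is the higher analogue of Equation~(\ref{equ:ifi}), which proved exactly $X({\mathcal M}_{2})\subset{\mathcal M}_{2}$ for $X\in{\mathcal L}_{1}$ using $[{\mathcal L}_{1},\tilde{\mathcal L}_{2}]\subset\tilde{\mathcal L}_{2}$. The generalization I would prove is $[{\mathcal L}_{j},\tilde{\mathcal L}_{k}]\subset\tilde{\mathcal L}_{k}$, whence for $g\in{\mathcal M}_{k}$ and $Z\in\tilde{\mathcal L}_{k}$ one computes $0=[W,Z](g)=W(Z(g))-Z(W(g))=-Z(W(g))$, forcing $W(g)\in{\mathcal M}(\tilde{\mathcal L}_{k})={\mathcal M}_{k}$. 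This reduces everything to the invariance $[{\mathcal L}_{j},\tilde{\mathcal L}_{k}]\subset\tilde{\mathcal L}_{k}$, which itself follows from the same downward induction since $\tilde{\mathcal L}_{k}$ is the ideal ${\mathfrak j}$ arising in the construction of ${\mathcal L}_{k-1}$ and the relations $[{\mathcal L}_{k-1},{\mathcal L}_{k}]\subset{\mathcal L}_{k}$ propagate the containment. I expect the bookkeeping linking $\tilde{\mathcal L}_{k}$ and ${\mathcal L}_{k}=\tilde{\mathcal L}_{k}\bigotimes_{\mathbb C}{\mathcal M}_{k}$ through the bracket to be the delicate part, but once the first-integral invariance is in hand the rest is the routine Leibniz expansion above.
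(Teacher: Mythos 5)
Your proposal has the right ingredients (the Leibniz rule $[W,\gamma Z]=\gamma[W,Z]+W(\gamma)Z$, the invariance of first integrals, and the characterization of $\tilde{\mathcal L}_{k}$ through derived algebras), but the induction runs in the wrong direction, and this is fatal rather than cosmetic. The crux, which you correctly isolate, is the invariance $[{\mathcal L}_{j},\tilde{\mathcal L}_{k}]\subset\tilde{\mathcal L}_{k}$, which then yields $W({\mathcal M}_{k})\subset{\mathcal M}_{k}$ for $W\in{\mathcal L}_{j}$. But $\tilde{\mathcal L}_{k}$ and ${\mathcal M}_{k}$ are constructed from ${\mathcal L}_{k-1}$, so this invariance can only be extracted from the statement at level $k-1$: from $[{\mathcal L}_{j},{\mathcal L}_{k-1}]\subset{\mathcal L}_{k-1}$ one gets $[{\mathcal L}_{j},({\mathcal L}_{k-1})^{(a)}]\subset({\mathcal L}_{k-1})^{(a)}$ by the Jacobi identity, and then Remark \ref{rem:precise} (namely $\tilde{\mathcal L}_{k}\bigotimes_{\mathbb C}\hat{K}_{n}=({\mathcal L}_{k-1})^{(a)}\bigotimes_{\mathbb C}\hat{K}_{n}$ and $\tilde{\mathcal L}_{k}=(\tilde{\mathcal L}_{k}\bigotimes_{\mathbb C}\hat{K}_{n})\cap{\mathcal L}_{k-1}$) converts this into $[{\mathcal L}_{j},\tilde{\mathcal L}_{k}]\subset\tilde{\mathcal L}_{k}$. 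In your downward induction, when you are proving level $k$, the level $k-1$ is precisely what has not yet been established; your closing claim that the invariance "follows from the same downward induction" because $[{\mathcal L}_{k-1},{\mathcal L}_{k}]\subset{\mathcal L}_{k}$ "propagates the containment" is circular, since that consecutive-index relation concerns ${\mathcal L}_{k-1}$ acting on ${\mathcal L}_{k}$ and says nothing about ${\mathcal L}_{j}$ for $j<k-1$. The paper's proof is exactly the reversed induction: base case $k=j+1$ by construction, and the step $k\Rightarrow k+1$ runs Jacobi $\Rightarrow$ invariance of $({\mathcal L}_{k})^{(a)}$ $\Rightarrow$ invariance of $\tilde{\mathcal L}_{k+1}$ $\Rightarrow$ invariance of ${\mathcal M}_{k+1}$ (via the computation of Equation (\ref{equ:ifi})) $\Rightarrow$ conclusion by your Leibniz expansion. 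Once the direction is reversed, the circularity you worried about for the terms $[W,Z_{i}^{k}]$ also evaporates: every element of ${\mathcal L}_{k+1}$ is an ${\mathcal M}_{k+1}$-combination of elements of $\tilde{\mathcal L}_{k+1}$, and both invariances place each Leibniz term back in ${\mathcal L}_{k+1}$ with no further induction.

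There is also a concrete false step: "$[W,R]$ lies in ${\mathcal L}_{k+1}\subset{\mathcal L}_{k}$". The inclusion ${\mathcal L}_{k+1}\subset{\mathcal L}_{k}$ does not hold in general: ${\mathcal L}_{k+1}=\tilde{\mathcal L}_{k+1}\bigotimes_{\mathbb C}{\mathcal M}_{k+1}$ allows coefficients in the larger field ${\mathcal M}_{k+1}\supset{\mathcal M}_{k}$, whereas ${\mathcal L}_{k}$ is only an ${\mathcal M}_{k}$-module; only $\tilde{\mathcal L}_{k+1}$, not ${\mathcal L}_{k+1}$, sits inside ${\mathcal L}_{k}$. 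Indeed, if the algebras ${\mathcal L}_{1},\ldots,{\mathcal L}_{m}$ were nested the extension would collapse to ${\mathcal L}={\mathcal L}_{1}$ and the sum in Proposition \ref{pro:sla} would be pointless. Consequently your inductive hypothesis at level $k+1$ does not even place $[W,R]$ in ${\mathcal L}_{k}$, which is a second symptom of the same underlying problem: in this construction the information flows upward, from ${\mathcal L}_{k-1}$ to $(\tilde{\mathcal L}_{k},{\mathcal M}_{k},{\mathcal L}_{k})$, and no downward organization of the induction can recover it.
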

\begin{proof}
The result holds for $k=j+1$ by construction.
Let us prove that if it holds for some $k \geq j+1$ so it does for $k+1$.
Since $[{\mathcal L}_{j}, {\mathcal L}_{k}] \subset {\mathcal L}_{k}$ we obtain
$[{\mathcal L}_{j}, ({\mathcal L}_{k})^{(1)}] \subset ({\mathcal L}_{k})^{(1)}$ and then
$[{\mathcal L}_{j}, ({\mathcal L}_{k})^{(2)}] \subset ({\mathcal L}_{k})^{(2)}$ by the
Jacobi identity. We have
$\tilde{\mathcal L}_{k+1} \bigotimes_{\mathbb C} \hat{K}_{n} = ({\mathcal L}_{k})^{(a)} \bigotimes_{\mathbb C} \hat{K}_{n}$
for some $a \in \{1,2\}$
by Remark \ref{rem:precise}. Since
\[ [{\mathcal L}_{j},({\mathcal L}_{k})^{(a)} \bigotimes_{\mathbb C} \hat{K}_{n}] \subset
({\mathcal L}_{k})^{(a)} \bigotimes_{\mathbb C} \hat{K}_{n} \]
for $a \in \{1,2\}$, we deduce
$[{\mathcal L}_{j},\tilde{\mathcal L}_{k+1} \bigotimes_{\mathbb C} \hat{K}_{n}] \subset
\tilde{\mathcal L}_{k+1} \bigotimes_{\mathbb C} \hat{K}_{n}$.
Notice that
$(\tilde{\mathcal L}_{k+1} \bigotimes_{\mathbb C} \hat{K}_{n}) \cap {\mathcal L}_{k} = \tilde{\mathcal L}_{k+1}$
by Remark \ref{rem:precise}.
Since $[{\mathcal L}_{j}, {\mathcal L}_{k}] \subset {\mathcal L}_{k}$ we obtain
$[{\mathcal L}_{j}, \tilde{\mathcal L}_{k+1}] \subset \tilde{\mathcal L}_{k+1}$.
This property implies $X({\mathcal M}_{k+1}) \subset {\mathcal M}_{k+1}$ for any $X \in {\mathcal L}_{j}$
(cf. Equation (\ref{equ:ifi})).
We deduce $[{\mathcal L}_{j}, {\mathcal L}_{k+1}] \subset {\mathcal L}_{k+1}$.
\end{proof}
Our next goal is showing that the extension ${\mathcal L}$ of $\tilde{\mathcal L}_{1}$ has a simple
structure.
\begin{pro}
\label{pro:sla}
Let ${\mathcal L}$ be the extension Lie algebra associated to a solvable complex Lie subalgebra
$\tilde{\mathcal L}_{1}$ of $\hat{\mathfrak X} \cn{n} \bigotimes_{\mathbb C} \hat{K}_{n}$. Then
${\mathcal L}$ is a solvable Lie algebra.
\end{pro}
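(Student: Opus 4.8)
The plan is to prove the statement in two stages: first show that $\mathcal{L}$ is genuinely closed under the bracket (this is the point left open in the remark preceding the proposition), and then bound its derived length using the chain $\mathcal{L}_m \subset \mathcal{L}_{m-1}+\mathcal{L}_m \subset \cdots$. The starting observation is that each summand $\mathcal{L}_j = \tilde{\mathcal{L}}_j \bigotimes_{\mathbb C} \mathcal{M}_j$ is itself a Lie algebra: for $X,Y \in \tilde{\mathcal{L}}_j$ and $f,g \in \mathcal{M}_j$ one has $[fX,gY]=fg[X,Y]$, since $f,g$ are first integrals of $\tilde{\mathcal{L}}_j$ (so $X(g)=Y(f)=0$) and $[X,Y]\in\tilde{\mathcal{L}}_j$; this is exactly the fact underlying $\ell(\mathcal{L}_j)=\ell(\tilde{\mathcal{L}}_j)$ already used in the construction. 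Combining $[\mathcal{L}_j,\mathcal{L}_j]\subset\mathcal{L}_j$ with the preceding Lemma, which gives $[\mathcal{L}_j,\mathcal{L}_k]\subset\mathcal{L}_k$ for $j<k$, and with antisymmetry of the bracket, I obtain the uniform rule $[\mathcal{L}_i,\mathcal{L}_j]\subset\mathcal{L}_{\max(i,j)}$ for all $1\leq i,j\leq m$. By bilinearity this immediately yields $[\mathcal{L},\mathcal{L}]\subset\mathcal{L}$, so $\mathcal{L}$ is a Lie algebra.

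Next I would set $\mathcal{N}_k = \mathcal{L}_k+\mathcal{L}_{k+1}+\cdots+\mathcal{L}_m$, with $\mathcal{N}_1=\mathcal{L}$ and $\mathcal{N}_{m+1}=0$. The same bracket rule shows that $\mathcal{N}_{k+1}$ is an ideal of $\mathcal{N}_k$: for $i>k$ we have $[\mathcal{L}_k,\mathcal{L}_i]\subset\mathcal{L}_i\subset\mathcal{N}_{k+1}$, and $[\mathcal{N}_{k+1},\mathcal{N}_{k+1}]\subset\mathcal{N}_{k+1}$. The crucial step, which I expect to be the main computation, is the second-derived drop $\mathcal{N}_k^{(2)}\subset\mathcal{N}_{k+1}$. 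Writing $\mathcal{N}_k=\mathcal{L}_k+\mathcal{N}_{k+1}$ and using that $\mathcal{N}_{k+1}$ is an ideal, I first get $\mathcal{N}_k^{(1)}\subset\mathcal{L}_k^{(1)}+\mathcal{N}_{k+1}$. Bracketing once more and invoking the construction relation $\mathcal{L}_k^{(2)}\subset\mathcal{L}_{k+1}\subset\mathcal{N}_{k+1}$, together with $[\mathcal{L}_k^{(1)},\mathcal{N}_{k+1}]\subset[\mathcal{L}_k,\mathcal{N}_{k+1}]\subset\mathcal{N}_{k+1}$ and $[\mathcal{N}_{k+1},\mathcal{N}_{k+1}]\subset\mathcal{N}_{k+1}$, gives $\mathcal{N}_k^{(2)}\subset\mathcal{N}_{k+1}$.

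Finally I would iterate. For $k=1$ the inequality reads $\mathcal{L}^{(2)}=\mathcal{N}_1^{(2)}\subset\mathcal{N}_2$, and by induction, assuming $\mathcal{L}^{(2(k-1))}\subset\mathcal{N}_k$, monotonicity of the derived series under inclusion of subalgebras gives $\mathcal{L}^{(2k)}=(\mathcal{L}^{(2(k-1))})^{(2)}\subset\mathcal{N}_k^{(2)}\subset\mathcal{N}_{k+1}$. Taking $k=m$ yields $\mathcal{L}^{(2m)}\subset\mathcal{N}_{m+1}=0$, so $\mathcal{L}$ is solvable with $\ell(\mathcal{L})\leq 2m$.

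The only genuine subtlety is the bookkeeping in this last iteration: one must apply the single-step estimate $\mathcal{N}_k^{(2)}\subset\mathcal{N}_{k+1}$ to the nested derived algebras, which is legitimate precisely because $\mathcal{L}^{(2(k-1))}$ is a subalgebra contained in the ideal $\mathcal{N}_k$, so its derived algebra is contained in $\mathcal{N}_k^{(2)}$. Everything else is a formal consequence of the bracket rule $[\mathcal{L}_i,\mathcal{L}_j]\subset\mathcal{L}_{\max(i,j)}$ and of the relation $\mathcal{L}_k^{(2)}\subset\mathcal{L}_{k+1}$ built into the construction via Proposition \ref{pro:structure}.
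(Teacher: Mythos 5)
Your proof is correct and takes essentially the same approach as the paper: the paper's own proof consists precisely of combining $[{\mathcal L}_{j},{\mathcal L}_{k}] \subset {\mathcal L}_{k}$ for $j \leq k$ with ${\mathcal L}_{j}^{(2)} \subset {\mathcal L}_{j+1}$ to get $({\mathcal L}_{l} + \ldots + {\mathcal L}_{m})^{(2)} \subset {\mathcal L}_{l+1} + \ldots + {\mathcal L}_{m}$ and hence $\ell ({\mathcal L}) \leq 2m$. Your write-up simply makes explicit the details the paper leaves implicit, namely the closure of each ${\mathcal L}_{j}$ under the bracket via the first-integral computation, the ideal property of ${\mathcal L}_{k+1} + \ldots + {\mathcal L}_{m}$, and the bookkeeping of the iteration.
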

\begin{proof}
Since ${\mathcal L}_{j}^{(2)} \subset {\mathcal L}_{j+1}$ and
$[{\mathcal L}_{j}, {\mathcal L}_{k}] \subset {\mathcal L}_{k}$
for all $1 \leq j \leq m$ and $j \leq k$, we deduce
that ${\mathcal L}$ is a Lie algebra such that
\[ ({\mathcal L}_{l} + \ldots + {\mathcal L}_{m})^{(2)} \subset {\mathcal L}_{l+1} + \ldots + {\mathcal L}_{m} \]
for any $1 \leq l \leq m$. In particular $\ell ({\mathcal L})$ is less or equal than $2m$.
\end{proof}
The previous construction provides a sequence $(Z_{k}^{j})_{1 \leq j \leq m, \ 1 \leq k \leq c_{j}}$,
where $\{ Z_{1}^{j}, \ldots, Z_{c_{j}}^{j} \}$ is a ${\mathcal M}'$-basis of ${\mathcal L}_{j}$
for any $1 \leq j \leq m$. Let us see that such sequence is associated to a unique decomposition of the
elements of the extension Lie algebra.
\begin{pro}
\label{pro:uni}
Let ${\mathcal L}$ be the extension Lie algebra associated to a solvable Lie subalgebra
$\tilde{\mathcal L}_{1}$ of $\hat{\mathfrak X} \cn{n} \bigotimes_{\mathbb C} \hat{K}_{n}$.
Given an element $X$ of ${\mathcal L}$, it can be written uniquely in the form
$\sum_{j=1}^{m} \sum_{k=1}^{c_{j}} \gamma_{k}^{j} Z_{k}^{j}$
where $\gamma_{k}^{j} \in {\mathcal M}_{j}$ for all $1 \leq j \leq m$ and $1 \leq k \leq c_{j}$.
Indeed the elements of ${\mathcal L}$ are exactly the elements of
$\hat{\mathfrak X} \cn{n} \bigotimes_{\mathbb C} \hat{K}_{n}$ that can be written in the previous
form.
\end{pro}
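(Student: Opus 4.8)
The plan is to establish existence and uniqueness of the decomposition separately, working throughout with the descending filtration $V_j := \tilde{\mathcal L}_j \bigotimes_{\mathbb C} \hat{K}_n$ inside $\hat{\mathfrak X} \cn{n} \bigotimes_{\mathbb C} \hat{K}_n$. Since $\mathcal L_j = \mathcal M_j \cdot \tilde{\mathcal L}_j$ with $\mathcal M_j \subset \hat{K}_n$, the $\hat{K}_n$-span of $\mathcal L_j$ is exactly $V_j$; hence $\mathcal L_j \subset V_j$ and $\dim \mathcal L_j = \dim_{\hat{K}_n} V_j$. Applying Remark \ref{rem:precise} to $\mathfrak g = \mathcal L_j$ gives $V_{j+1} = (\mathcal L_j)^{(a)} \bigotimes_{\mathbb C} \hat{K}_n \subset V_j$ for the relevant $a \in \{1,2\}$, together with the two identities I expect to do the heavy lifting: $V_{j+1} \cap \mathcal L_j = \tilde{\mathcal L}_{j+1}$ and $\dim_{\hat{K}_n} V_{j+1} = \dim \mathcal L_{j+1} < \dim \mathcal L_j = \dim_{\hat{K}_n} V_j$. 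Consequently $V_1 \supsetneq V_2 \supsetneq \cdots \supsetneq V_{m+1} = 0$.

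For existence I would take an arbitrary $X \in \mathcal L = \mathcal L_1 + \cdots + \mathcal L_m$, write $X = Y_1 + \cdots + Y_m$ with $Y_j \in \mathcal L_j$, and reduce one level at a time. The $\mathcal M'$-basis property furnishes unique $\gamma_k^1 \in \mathcal M_1$ with $Y_1 - \sum_k \gamma_k^1 Z_k^1 \in \tilde{\mathcal L}_2$; because $1 \in \mathcal M_2$ this remainder already lies in $\mathcal L_2 = \mathcal M_2 \cdot \tilde{\mathcal L}_2$, so it can be absorbed into $Y_2$. Iterating, at stage $j$ I extract $\gamma_k^j \in \mathcal M_j$ and push a remainder in $\tilde{\mathcal L}_{j+1} \subset \mathcal L_{j+1}$ forward. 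At stage $m$ the leftover lies in $\tilde{\mathcal L}_{m+1}$, which vanishes since $\mathcal L_{m+1} = \tilde{\mathcal L}_{m+1} \bigotimes_{\mathbb C} \mathcal M_{m+1} = 0$ forces $\tilde{\mathcal L}_{m+1} = 0$. This yields $X = \sum_{j=1}^m \sum_k \gamma_k^j Z_k^j$. The reverse inclusion is immediate, as each $\gamma_k^j Z_k^j$ belongs to the $\mathcal M_j$-module $\mathcal L_j$, so any such sum lies in $\mathcal L$; together with uniqueness this gives the final ``exactly'' assertion.

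Uniqueness is the delicate point, precisely because the sum $\mathcal L_1 + \cdots + \mathcal L_m$ is not direct and the subspaces $V_j$ overlap. Setting $U_j = \sum_k \gamma_k^j Z_k^j \in \mathcal L_j$, I would suppose $U_1 + \cdots + U_m = 0$ and induct down the filtration. Reducing modulo $V_2$ annihilates every $U_j$ with $j \geq 2$, since $\mathcal L_j \subset V_j \subset V_2$ there; hence $U_1 \in V_2 \cap \mathcal L_1 = \tilde{\mathcal L}_2$. As the classes of $Z_1^1, \ldots, Z_{c_1}^1$ form an $\mathcal M_1$-basis of $\mathcal L_1 / \tilde{\mathcal L}_2$ and the $\gamma_k^1$ lie in $\mathcal M_1$, it follows that all $\gamma_k^1 = 0$, so $U_1 = 0$. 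Repeating modulo $V_3, V_4, \ldots$ peels off the levels successively, giving $U_i \in V_{i+1} \cap \mathcal L_i = \tilde{\mathcal L}_{i+1}$ and therefore $\gamma_k^i = 0$ for all $i$ and $k$.

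The step that will require the most care is keeping track of the coefficient field: the $\gamma_k^j$ must be controlled in $\mathcal M_j$ and not merely in $\hat{K}_n$, because over $\hat{K}_n$ the $c_j = b_j + e_j$ vectors $Z_k^j$ are linearly dependent modulo $V_{j+1}$ (the jump $V_j / V_{j+1}$ has dimension only $b_j$). The filtration argument sidesteps this by never comparing the $Z_k^j$ over $\hat{K}_n$: it uses the intersection identity $V_{j+1} \cap \mathcal L_j = \tilde{\mathcal L}_{j+1}$ of Remark \ref{rem:precise} to land inside $\tilde{\mathcal L}_{j+1}$, and only then invokes the $\mathcal M_j$-basis property level by level.
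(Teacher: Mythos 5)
Your proof is correct and rests on exactly the same two ingredients as the paper's: the intersection identity ${\mathcal L}_{j} \cap (\tilde{\mathcal L}_{j+1} \bigotimes_{\mathbb C} \hat{K}_{n}) = \tilde{\mathcal L}_{j+1}$ from Remark \ref{rem:precise} and the level-by-level ${\mathcal M}'$-basis property; the paper merely packages existence and uniqueness into a single downward induction on ${\mathcal K}_{l} = {\mathcal L}_{l} + \ldots + {\mathcal L}_{m}$, while you split them into a forward absorption argument and a filtration argument modulo $V_{j+1}$. This is a cosmetic reorganization, not a different route.
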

\begin{proof}
Any element of the form $\sum_{j=1}^{m} \sum_{k=1}^{c_{j}} \gamma_{k}^{j} Z_{k}^{j}$
(where $\gamma_{k}^{j} \in {\mathcal M}_{j}$ for all $1 \leq j \leq m$ and $1 \leq k \leq c_{j}$)
belongs to ${\mathcal L}$ since
$\sum_{k=1}^{c_{j}} \gamma_{k}^{j} Z_{k}^{j} \in {\mathcal L}_{j}$ for any $1 \leq j \leq m$.

We denote ${\mathcal K}_{l} = {\mathcal L}_{l} + \ldots + {\mathcal L}_{m}$.
Let $1 \leq l \leq m$.
Let us prove that any element $X$ of  ${\mathcal K}_{l} $
can be written uniquely in the form
$\sum_{j=l}^{m} \sum_{k=1}^{c_{j}} \gamma_{k}^{j} Z_{k}^{j}$ where
$\gamma_{k}^{j} \in {\mathcal M}_{j}$ for all $l \leq j \leq m$ and $1 \leq k \leq c_{j}$.
The result is obvious for $l=m$ by construction. Let us show that if the result holds for
$2 \leq l+1 \leq m$ then so it does for $l$.
Consider $X \in {\mathcal K}_{l} $.
It is of the form $\sum_{j=l}^{m} X_{j}$ where
$X_{j} \in {\mathcal L}_{j}$ for any $l \leq j \leq m$.
If the decomposition $X=\sum_{j=l}^{m} \sum_{k=1}^{c_{j}} \gamma_{k}^{j} Z_{k}^{j}$ exists then
$\sum_{k=1}^{c_{l}} \gamma_{k}^{l} Z_{k}^{l}$ is a formal vector field with
$ \gamma_{k}^{l} \in {\mathcal M}_{l}$ for $1 \leq k \leq c_{l}$ such that
$X - \sum_{k=1}^{c_{l}} \gamma_{k}^{l} Z_{k}^{l} \in {\mathcal K}_{l+1} $.
We use the previous condition to define $\sum_{k=1}^{c_{l}} \gamma_{k}^{l} Z_{k}^{l}$.
Indeed since the condition is equivalent to
$X_{l} -  \sum_{k=1}^{c_{l}} \gamma_{k}^{l} Z_{k}^{l} \in \tilde{\mathcal L}_{l+1}$,
we deduce that $\sum_{k=1}^{c_{l}} \gamma_{k}^{l} Z_{k}^{l}$ is unique with the required properties.
The result is a consequence of applying the induction hypothesis to
$X -  \sum_{k=1}^{c_{l}} \gamma_{k}^{l} Z_{k}^{l}$.
\end{proof}
\begin{defi}
We say that ${\mathcal B}:=\{ Z_{1}^{1}, \ldots, Z_{c_{1}}^{1}, \ldots, Z_{1}^{m}, \ldots, Z_{c_{m}}^{m} \}$
is a ${\mathcal M}$-{\it basis} of ${\mathcal L} = {\mathcal L}_{1} + \ldots + {\mathcal L}_{m}$ if
$\{Z_{1}^{j}, \ldots, Z_{c_{j}}^{j}\}$ is a ${\mathcal M}'$-basis of ${\mathcal L}_{j}$ for any
$1 \leq j \leq m$. Let $X \in {\mathcal L}$; the unique expression provided by
Proposition \ref{pro:uni} is called a
${\mathcal M}$-{\it decomposition} of $X$ with respect to ${\mathcal L}$  and ${\mathcal B}$.
\end{defi}
Resuming we replace a solvable Lie subalgebra $\tilde{\mathcal L}_{1}$ of
$\hat{\mathfrak X} \cn{n} \bigotimes_{\mathbb C} \hat{K}_{n}$ with a solvable
Lie algebra ${\mathcal L}$ that has a much simpler structure since
its elements admit the unique expression provided by a ${\mathcal M}$-decomposition.
Let us remark that a vector field or a diffeomorphism that normalizes
$\tilde{\mathcal L}_{1}$ also normalizes ${\mathcal L}$, making the extension
Lie algebra suitable for the study of algebraic and geometrical problems.
\subsection{Increasing sequences of subgroups of a solvable group}
Let $\tilde{\mathcal L}_{1}^{0} \subset \tilde{\mathcal L}_{1}^{1} \subset \ldots \subset \tilde{\mathcal L}_{1}^{j} \subset \ldots$
be an increasing sequence of solvable Lie subalgebras of
$\hat{\mathfrak X} \cn{n} \bigotimes_{\mathbb C} \hat{K}_{n}$.
We can associate Lie algebras $\tilde{\mathcal L}_{k}^{j}$, ${\mathcal L}_{k}^{j}$ and
${\mathcal L}^{j}={\mathcal L}_{1}^{j} + \ldots + {\mathcal L}_{m_{j}}^{j}$ to $\tilde{\mathcal L}_{1}^{j}$
as described in the previous section. We place the superindex $j$ to all the objects
associated to $\tilde{\mathcal L}_{1}^{j}$.
\begin{rem}
\label{rem:extg}
We will apply the results in this section to sequences
$J(0) \subset J(1) \subset \ldots$  of subgroups of a  solvable group $\Gamma$
contained in $\mathrm{Diff}_{u}({\mathbb C}^{n},0)$.
We define $\tilde{\mathcal L}_{1}^{j} = L(\overline{J(j)}^{z})$.
Since $J(j) \subset J(j+1)$ for $j \geq 0$, we obtain $\overline{J(j)}^{z} \subset \overline{J(j+1)}^{z}$
and
\[ \tilde{\mathcal L}_{1}^{j} =
L(\overline{J(j)}^{z}) \subset L(\overline{J(j+1)}^{z}) = \tilde{\mathcal L}_{1}^{j+1}  \]
for $j \geq 0$. In particular we obtain an increasing sequence
$\tilde{\mathcal L}_{1}^{0} \subset \tilde{\mathcal L}_{1}^{1} \subset \ldots$ of solvable Lie algebras
(cf. Lemma \ref{lem:bas}).
\end{rem}
We will associate to ${\mathcal L}^{j}$ a list of invariants. We will see that
the sequence of lists for $j \geq 0$ is increasing in the lexicographical order.
The equality of the invariants associated to ${\mathcal L}^{j}$ and ${\mathcal L}^{j+1}$
implies ${\mathcal L}^{j} = {\mathcal L}^{j+1}$. We will obtain ${\mathcal L}^{j} = {\mathcal L}^{j+1}$
for some $j \geq 0$ by showing that the list takes finitely many values.
\begin{lem}
There exists $0 \leq j \leq n$ such that $\dim {\mathcal L}_{1}^{j} = \dim {\mathcal L}_{1}^{j+1}$.
Moreover there exists $0 \leq k < (n+1)^{3}$ such that
$\dim {\mathcal L}_{1}^{k} = \dim {\mathcal L}_{1}^{k+1}$,
$\dim ({\mathcal L}_{1}^{k} )^{(1)}= \dim ({\mathcal L}_{1}^{k+1})^{(1)}$
and $\dim ({\mathcal L}_{1}^{k})^{(2)} = \dim ({\mathcal L}_{1}^{k+1})^{(2)}$.
\end{lem}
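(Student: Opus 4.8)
The lemma makes two claims. The first is that among the dimensions $\dim {\mathcal L}_1^0, \dim {\mathcal L}_1^1, \ldots, \dim {\mathcal L}_1^{n+1}$ — a nondecreasing sequence of integers by monotonicity — two consecutive ones must coincide somewhere in the range $0 \le j \le n$. The second claim upgrades this to a triple of invariants $(\dim {\mathcal L}_1^k, \dim ({\mathcal L}_1^k)^{(1)}, \dim ({\mathcal L}_1^k)^{(2)})$ stabilizing between two consecutive indices $k, k+1$ for some $k < (n+1)^3$. So the whole lemma is a pigeonhole statement: a monotone (in a suitable order) sequence of tuples drawn from a finite set must have a repeated consecutive value within a controlled window.

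**Plan for the first claim.** First I would record that the sequence $\tilde{\mathcal L}_1^0 \subset \tilde{\mathcal L}_1^1 \subset \cdots$ is increasing (Remark \ref{rem:extg}), hence so is ${\mathcal L}_1^j = \tilde{\mathcal L}_1^j \bigotimes_{\mathbb C} {\mathcal M}_j$ in the appropriate sense, giving $\dim {\mathcal L}_1^j \le \dim {\mathcal L}_1^{j+1}$ for every $j \ge 0$. The key numerical bound is that each $\dim {\mathcal L}_1^j$ lies in $\{0, 1, \ldots, n\}$: indeed $\dim$ denotes the dimension over $\hat{K}_n$ of a Lie subalgebra of $\hat{\mathfrak X}\cn{n} \bigotimes_{\mathbb C} \hat{K}_n$, and that ambient space has $\hat{K}_n$-dimension $n$. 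So the integers $\dim {\mathcal L}_1^0 \le \dim {\mathcal L}_1^1 \le \cdots \le \dim {\mathcal L}_1^{n+1}$ form a nondecreasing sequence of $n+2$ values in $\{0,\ldots,n\}$; if all $n+1$ consecutive gaps among the first $n+2$ terms were strict, the dimension would increase by at least $n+1$, exceeding the available range $n$. Hence some consecutive pair coincides with $0 \le j \le n$.

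**Plan for the second claim.** Here I would consider the triple of nonnegative integers $D^j := (\dim {\mathcal L}_1^j, \dim ({\mathcal L}_1^j)^{(1)}, \dim ({\mathcal L}_1^j)^{(2)})$. Each entry is a dimension of a Lie subalgebra of the $n$-dimensional space, so each lies in $\{0,\ldots,n\}$ and the triple takes at most $(n+1)^3$ distinct values. The crucial monotonicity input is that the sequence $(D^j)_{j \ge 0}$ is nondecreasing in the lexicographical (equivalently, componentwise-monotone) order: since $\tilde{\mathcal L}_1^j \subset \tilde{\mathcal L}_1^{j+1}$, passing to derived algebras preserves inclusions, giving ${\mathcal L}_1^j \subset {\mathcal L}_1^{j+1}$, $({\mathcal L}_1^j)^{(1)} \subset ({\mathcal L}_1^{j+1})^{(1)}$ and $({\mathcal L}_1^j)^{(2)} \subset ({\mathcal L}_1^{j+1})^{(2)}$, whence each of the three dimensions is individually nondecreasing in $j$. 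A strictly increasing sequence in a partially ordered set of cardinality $(n+1)^3$ can have length at most $(n+1)^3$; therefore among $D^0, D^1, \ldots, D^{(n+1)^3}$ — that is, $(n+1)^3 + 1$ tuples — two consecutive ones must be equal, yielding an index $k$ with $0 \le k < (n+1)^3$ and $D^k = D^{k+1}$, which is exactly the three equalities asserted.

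**Main obstacle.** The genuine content is not the counting, which is elementary pigeonhole, but verifying the two structural facts that feed it: that each relevant $\dim$ is bounded by $n$, and that all three dimension functions are monotone in $j$. The monotonicity of the plain dimension follows directly from $\tilde{\mathcal L}_1^j \subset \tilde{\mathcal L}_1^{j+1}$ together with ${\mathcal L}_1^j = \tilde{\mathcal L}_1^j \bigotimes_{\mathbb C} {\mathcal M}_j$; the only subtlety I would be careful about is that the coefficient field ${\mathcal M}_j = {\mathcal M}({\mathcal L}_1^j)$ shrinks as the Lie algebra grows, so I must check that $\dim {\mathcal L}_1^j$ computed as the $\hat{K}_n$-dimension of ${\mathcal L}_1^j \bigotimes_{\mathbb C} \hat{K}_n$ is monotone despite the varying intermediate field — but since the final dimension is taken over the fixed field $\hat{K}_n$ and inclusion of the underlying Lie algebras is preserved after tensoring up to $\hat{K}_n$, this causes no difficulty. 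Once these facts are in place the argument is a direct application of the pigeonhole principle, so I do not expect any serious difficulty to remain.
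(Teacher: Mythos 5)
Your pigeonhole skeleton (componentwise-monotone triples drawn from at most $(n+1)^{3}$ possible values, hence a consecutive repetition) is exactly the paper's counting argument, and your treatment of the first claim is correct. However, your justification of the monotonicity of the triple rests on a false intermediate claim: the inclusion ${\mathcal L}_{1}^{j} \subset {\mathcal L}_{1}^{j+1}$ does \emph{not} follow from $\tilde{\mathcal L}_{1}^{j} \subset \tilde{\mathcal L}_{1}^{j+1}$, precisely because of the phenomenon you flag in your last paragraph, namely that the coefficient field ${\mathcal M}_{1}^{j}={\mathcal M}(\tilde{\mathcal L}_{1}^{j})$ shrinks as the algebra grows. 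For instance, for $n=2$ take $\tilde{\mathcal L}_{1}^{j} = {\mathbb C}\, x_{1}\frac{\partial}{\partial x_{1}}$ and $\tilde{\mathcal L}_{1}^{j+1} = {\mathbb C}\, x_{1}\frac{\partial}{\partial x_{1}} + {\mathbb C}\, x_{2}\frac{\partial}{\partial x_{2}}$: then $x_{2} \in {\mathcal M}_{1}^{j}$ whereas ${\mathcal M}_{1}^{j+1}={\mathbb C}$, so $x_{2}x_{1}\frac{\partial}{\partial x_{1}}$ belongs to ${\mathcal L}_{1}^{j}$ but not to ${\mathcal L}_{1}^{j+1}$. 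In fact, establishing ${\mathcal L}_{1}^{j} \subset {\mathcal L}_{1}^{j+1}$ is exactly the content of Lemma \ref{lem:aux1}, whose \emph{hypotheses} are the three dimension equalities that the present lemma is supposed to produce; assuming that inclusion here is circular. Consequently the inclusions $({\mathcal L}_{1}^{j})^{(k)} \subset ({\mathcal L}_{1}^{j+1})^{(k)}$ for $k=1,2$, which you derive from it, are unjustified as written.

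The monotonicity you need is nevertheless true, and the repair is the maneuver you already used for the plain dimension, pushed to the derived algebras; this is what the paper does. One works with the genuinely nested algebras: $(\tilde{\mathcal L}_{1}^{j})^{(k)} \subset (\tilde{\mathcal L}_{1}^{j+1})^{(k)}$ holds for every $k$ since these are honest Lie algebra inclusions, and then one invokes the equality $\dim ({\mathcal L}_{1}^{j})^{(k)} = \dim (\tilde{\mathcal L}_{1}^{j})^{(k)}$. This equality is not a formality about extension of scalars: derived algebras do not commute with enlarging the coefficient field (if $\tilde{\mathcal L}$ is abelian, $(\tilde{\mathcal L} \bigotimes_{\mathbb C} \hat{K}_{n})^{(1)}$ is in general non-zero because $[fX,gY]$ contains the terms $fX(g)Y - gY(f)X$). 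It holds here only because ${\mathcal M}_{1}^{j}$ consists of first integrals of $\tilde{\mathcal L}_{1}^{j}$, so that $[fX,gY]=fg[X,Y]$ for $f,g \in {\mathcal M}_{1}^{j}$ and $X,Y \in \tilde{\mathcal L}_{1}^{j}$, and tensoring by ${\mathcal M}_{1}^{j}$ does not enlarge the $\hat{K}_{n}$-span of any derived algebra. With this substitution (inclusions at the level of the $\tilde{\mathcal L}_{1}^{j}$ plus the dimension-transfer equality, in place of the false inclusions of the ${\mathcal L}_{1}^{j}$), your pigeonhole count goes through verbatim and reproduces the paper's proof.
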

\begin{proof}
 Since $\dim {\mathcal L}_{1}^{j}  = \dim \tilde{\mathcal L}_{1}^{j}$
for $j \geq 1$, we obtain an increasing sequence
\[ 0 \leq \dim {\mathcal L}_{1}^{1} \leq  \dim{\mathcal L}_{1}^{2} \leq \ldots \leq
\dim {\mathcal L}_{1}^{j} \leq \dim {\mathcal L}_{1}^{j+1} \leq \ldots \leq n. \]
Clearly there exists $0 \leq j \leq n$ such that $\dim {\mathcal L}_{1}^{j} = \dim {\mathcal L}_{1}^{j+1}$.

Notice that $(\tilde{\mathcal L}_{1}^{j})^{(k)} \subset (\tilde{\mathcal L}_{1}^{j+1})^{(k)}$
and $\dim ({\mathcal L}_{1}^{j})^{(k)} = \dim (\tilde{\mathcal L}_{1}^{j})^{(k)}$
for all $j \geq 0$ and $k \geq 0$. As a consequence
%
$(\dim {\mathcal L}_{1}^{j},\dim ({\mathcal L}_{1}^{j} )^{(1)},\dim ({\mathcal L}_{1}^{j} )^{(2)})$
is increasing in every component and in particular for the lexicographical order.
Since there are at most $(n+1)^{3}$ $3$-uples, they coincide for some $0 \leq k < (n+1)^{3}$.
\end{proof}
\begin{defi}
We define $ I(j,k) =(0,0,0,0)$ if ${\mathcal L}_{k}^{j} = 0$ and
\[  I(j,k) = (dim  ({\mathcal L}_{k}^{j}) , dim  ({\mathcal L}_{k}^{j})^{(1)}, dim  ({\mathcal L}_{k}^{j})^{(2)} ,
dim_{{\mathcal M}_{k}^{j}} {\mathcal L}_{k}^{j}/\tilde{\mathcal L}_{k+1}^{j})  \]
otherwise.
We define the sequence
$ I(j) = (  I(j,1) , \ldots,  I(j,n) )$.
\end{defi}
Let us study the behavior of the sequence $( I(j,1))_{j \geq 0}$ and its impact on the relation
between ${\mathcal L}_{2}^{j}$ and ${\mathcal L}_{2}^{j+1}$.
\begin{lem}
\label{lem:aux1}
Let $j \geq 0$ such that $\dim ({\mathcal L}_{1}^{j})^{(k)} = \dim ({\mathcal L}_{1}^{j+1})^{(k)}$
for any $k \in \{0,1,2\}$. Then
\begin{itemize}
\item ${\mathcal L}_{1}^{j} \subset {\mathcal L}_{1}^{j+1}$, $\tilde{\mathcal L}_{2}^{j} \subset \tilde{\mathcal L}_{2}^{j+1}$,
${\mathcal L}_{2}^{j} \subset {\mathcal L}_{2}^{j+1}$
and $\dim {\mathcal L}_{2}^{j}= \dim {\mathcal L}_{2}^{j+1}$.
\item ${\mathcal M}_{1}^{j} = {\mathcal M}_{1}^{j+1}$ and ${\mathcal M}_{2}^{j} = {\mathcal M}_{2}^{j+1}$.
\item $ I(j,1)  \leq  I(j+1,1) $.
\end{itemize}
\end{lem}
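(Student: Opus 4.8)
The hypothesis is that the triple of dimensions
$(\dim \mathcal{L}_1^j, \dim (\mathcal{L}_1^j)^{(1)}, \dim (\mathcal{L}_1^j)^{(2)})$
agrees for indices $j$ and $j+1$. Recall from Remark \ref{rem:precise} that
$\tilde{\mathcal{L}}_2^j = (\mathcal{L}_1^j)^{(a_j)} \otimes_{\mathbb{C}} \hat{K}_n \cap \mathcal{L}_1^j$,
where $a_j \in \{1,2\}$ is the first index with
$\dim (\mathcal{L}_1^j)^{(a_j)} < \dim \mathcal{L}_1^j$. My plan is first to check that
the inclusions are inherited from the assumed inclusion
$\tilde{\mathcal{L}}_1^j \subset \tilde{\mathcal{L}}_1^{j+1}$, then to promote the
dimension \emph{equalities} into the genuine \emph{equalities of first integral fields}
$\mathcal{M}_k^j = \mathcal{M}_k^{j+1}$, and finally to read off the monotonicity of the
invariant $I(\cdot,1)$.

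\textbf{Step 1: the inclusions.} I would start from the given increasing sequence
$\tilde{\mathcal{L}}_1^j \subset \tilde{\mathcal{L}}_1^{j+1}$. Tensoring with $\mathcal{M}_1^j$,
respectively $\mathcal{M}_1^{j+1}$, the inclusion $\mathcal{L}_1^j \subset \mathcal{L}_1^{j+1}$
follows once I verify $\mathcal{M}_1^j = \mathcal{M}_1^{j+1}$; but $\dim \mathcal{L}_1^j =
\dim \mathcal{L}_1^{j+1}$ as $\hat{K}_n$-vector spaces already forces
$\tilde{\mathcal{L}}_1^j \otimes \hat{K}_n = \tilde{\mathcal{L}}_1^{j+1}\otimes \hat{K}_n$,
hence the two Lie algebras have the same formal meromorphic first integrals, giving
$\mathcal{M}_1^j = \mathcal{M}_1^{j+1}$. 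The same span equality applied to the derived
algebras, together with $\dim (\mathcal{L}_1^j)^{(k)} = \dim (\mathcal{L}_1^{j+1})^{(k)}$ for
$k=1,2$, shows that the selection index satisfies $a_j = a_{j+1}$ and that
$(\mathcal{L}_1^j)^{(a)} \otimes \hat{K}_n = (\mathcal{L}_1^{j+1})^{(a)} \otimes \hat{K}_n$.
By the description of $\tilde{\mathcal{L}}_2$ in Remark \ref{rem:precise} this yields
$\tilde{\mathcal{L}}_2^j \subset \tilde{\mathcal{L}}_2^{j+1}$ with equal $\hat{K}_n$-span,
whence $\dim \mathcal{L}_2^j = \dim \mathcal{L}_2^{j+1}$, the equality
$\mathcal{M}_2^j = \mathcal{M}_2^{j+1}$, and then $\mathcal{L}_2^j \subset \mathcal{L}_2^{j+1}$.

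\textbf{Step 2: the invariant.} Once the first two bullet points are in hand, the inequality
$I(j,1) \leq I(j+1,1)$ is a matter of comparing the four components of the invariant. The first
three are the dimensions $\dim \mathcal{L}_1^j$, $\dim (\mathcal{L}_1^j)^{(1)}$,
$\dim (\mathcal{L}_1^j)^{(2)}$, which are equal by hypothesis, so only the fourth component
$\dim_{\mathcal{M}_1^j} \mathcal{L}_1^j / \tilde{\mathcal{L}}_2^j$ can change. Using
$\mathcal{M}_1^j = \mathcal{M}_1^{j+1}$ and the inclusion $\mathcal{L}_1^j \hookrightarrow
\mathcal{L}_1^{j+1}$ carrying $\tilde{\mathcal{L}}_2^j$ into $\tilde{\mathcal{L}}_2^{j+1}$, the
induced $\mathcal{M}_1^j$-linear map on quotients is injective, so the fourth component is
monotone nondecreasing; hence $I(j,1) \leq I(j+1,1)$ for the lexicographical order.

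\textbf{Main obstacle.} The delicate point is Step 1, specifically passing from the numerical
dimension equalities to the structural equality
$(\mathcal{L}_1^j)^{(a)} \otimes \hat{K}_n = (\mathcal{L}_1^{j+1})^{(a)} \otimes \hat{K}_n$ and
from there to $\mathcal{M}_2^j = \mathcal{M}_2^{j+1}$: one must be careful that an inclusion of
$\hat{K}_n$-vector spaces of equal finite dimension is an equality, and that equal span implies
equal first integral field, since the field $\mathcal{M}(\cdot)$ depends only on the
$\hat{K}_n$-span of a Lie algebra. The rest is bookkeeping with the definitions of
$\tilde{\mathcal{L}}_2$ and the invariant. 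I expect the argument to hinge entirely on Remark
\ref{rem:precise} and on the fact that $\mathcal{M}(\mathfrak{g})$ is a span invariant.
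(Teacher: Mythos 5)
Your proposal is correct and follows essentially the same route as the paper's own proof: you upgrade the inclusion $\tilde{\mathcal L}_{1}^{j} \subset \tilde{\mathcal L}_{1}^{j+1}$ plus the dimension equality to an equality of $\hat{K}_{n}$-spans, deduce ${\mathcal M}_{1}^{j}={\mathcal M}_{1}^{j+1}$ and ${\mathcal L}_{1}^{j} \subset {\mathcal L}_{1}^{j+1}$, use the agreement of the dimension triple to see that the selection index of Remark \ref{rem:precise} is the same at levels $j$ and $j+1$ and thereby transfer everything to ${\mathcal L}_{2}$, and finally get the fourth component of $I(\cdot,1)$ to be nondecreasing via the injective quotient map, exactly as in the paper. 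The one step you state more tersely than the paper is the injectivity of ${\mathcal L}_{1}^{j}/\tilde{\mathcal L}_{2}^{j} \to {\mathcal L}_{1}^{j+1}/\tilde{\mathcal L}_{2}^{j+1}$, which does not follow from the pair inclusion alone but needs the saturation property $\tilde{\mathcal L}_{2}^{j} = (\tilde{\mathcal L}_{2}^{j} \otimes_{\mathbb C} \hat{K}_{n}) \cap {\mathcal L}_{1}^{j}$ combined with the span equality $\tilde{\mathcal L}_{2}^{j} \otimes_{\mathbb C} \hat{K}_{n} = \tilde{\mathcal L}_{2}^{j+1} \otimes_{\mathbb C} \hat{K}_{n}$ — both of which you have already established, so the gap is only expository.
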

\begin{proof}
We deduce $\tilde{\mathcal L}_{1}^{j} \bigotimes_{\mathbb C} \hat{K}_{n}=
\tilde{\mathcal L}_{1}^{j+1} \bigotimes_{\mathbb C} \hat{K}_{n}$ since
$\tilde{\mathcal L}_{1}^{j} \subset \tilde{\mathcal L}_{1}^{j+1}$ and
$\dim \tilde{\mathcal L}_{1}^{j} = \dim \tilde{\mathcal L}_{1}^{j+1}$.
In particular we obtain ${\mathcal M}_{1}^{j}={\mathcal M}_{1}^{j+1}$.
Thus ${\mathcal L}_{1}^{j}$ is contained in ${\mathcal L}_{1}^{j+1}$.
Moreover
$({\mathcal L}_{1}^{j})^{(k)} \subset ({\mathcal L}_{1}^{j+1})^{(k)}$
for any $k \in \{0,1,2\}$.

We have $\dim \tilde{\mathcal L}_{2}^{l} = \dim ({\mathcal L}_{1}^{l})^{(k_{l})}$ and
$\tilde{\mathcal L}_{2}^{l} = (({\mathcal L}_{1}^{l})^{(k_{l})} \bigotimes_{\mathbb C} \hat{K}_{n}) \cap {\mathcal L}_{1}^{l}$
for the first $k_{l} \in \{1,2\}$ such that $\dim ({\mathcal L}_{1}^{l})^{(k_{l})} < \dim {\mathcal L}_{1}^{l}$ for $l \geq 0$
by Remark \ref{rem:precise}. Let $r:=k_{j}=k_{j+1}$; we obtain
$\tilde{\mathcal L}_{2}^{j}  \subset \tilde{\mathcal L}_{2}^{j+1}$ and
\[ \dim {\mathcal L}_{2}^{j}= \dim \tilde{\mathcal L}_{2}^{j} = \dim ({\mathcal L}_{1}^{j})^{(r)} =
\dim ({\mathcal L}_{1}^{j+1})^{(r)} =   \dim \tilde{\mathcal L}_{2}^{j+1} = \dim {\mathcal L}_{2}^{j+1} . \]
The properties $\tilde{\mathcal L}_{2}^{j}  \subset \tilde{\mathcal L}_{2}^{j+1}$
and $\dim {\mathcal L}_{2}^{j}=\dim {\mathcal L}_{2}^{j+1}$ imply
${\mathcal M}_{2}^{j}={\mathcal M}_{2}^{j+1}$.
Hence we get
${\mathcal L}_{2}^{j} \subset {\mathcal L}_{2}^{j+1}$.

The natural map $\varpi_{j}: {\mathcal L}_{1}^{j}/\tilde{\mathcal L}_{2}^{j} \to {\mathcal L}_{1}^{j+1}/\tilde{\mathcal L}_{2}^{j+1}$
is well-defined and ${\mathcal M}_{1}$-linear where ${\mathcal M}_{1}:={\mathcal M}_{1}^{j}={\mathcal M}_{1}^{j+1}$.
Since
\[ \tilde{\mathcal L}_{2}^{j} \bigotimes_{\mathbb C} \hat{K}_{n}=\tilde{\mathcal L}_{2}^{j+1} \bigotimes_{\mathbb C} \hat{K}_{n},
\ \tilde{\mathcal L}_{2}^{j} =  (\tilde{\mathcal L}_{2}^{j} \bigotimes_{\mathbb C} \hat{K}_{n}) \cap {\mathcal L}_{1}^{j} \]
and $\tilde{\mathcal L}_{2}^{j+1} =  (\tilde{\mathcal L}_{2}^{j+1} \bigotimes_{\mathbb C} \hat{K}_{n}) \cap {\mathcal L}_{1}^{j+1}$,
the map $\varpi_{j}$ is injective and we obtain
$\dim_{{\mathcal M}_{1}} {\mathcal L}_{1}^{j}/\tilde{\mathcal L}_{2}^{j} \leq
\dim_{{\mathcal M}_{1}} {\mathcal L}_{1}^{j+1}/\tilde{\mathcal L}_{2}^{j+1}$.
\end{proof}
\begin{cor}
\label{cor:ind}
The sequence $( I(j,1) )_{j \geq 0}$ is increasing in the lexicographical order.
Moreover $ I(j,1)  =  I(j+1,1) $ implies ${\mathcal L}_{2}^{j} \subset {\mathcal L}_{2}^{j+1}$.
\end{cor}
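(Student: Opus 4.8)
The plan is to obtain both assertions directly from Lemma \ref{lem:aux1}, combined with the componentwise monotonicity of the first three entries of $I(j,1)$ already recorded in the preceding lemma. Recall that, when ${\mathcal L}_1^j \neq 0$, the tuple $I(j,1)$ has leading entries $\dim ({\mathcal L}_1^j)^{(k)}$ for $k \in \{0,1,2\}$ (reading $(\cdot)^{(0)} = (\cdot)$) and last entry $\dim_{{\mathcal M}_1^j} {\mathcal L}_1^j/\tilde{\mathcal L}_2^j$; the whole task is to compare these $4$-tuples in the lexicographic order, so I would organize the argument around that order.

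First I would record the monotonicity of the three leading entries. Since $\tilde{\mathcal L}_1^j \subset \tilde{\mathcal L}_1^{j+1}$ we have $(\tilde{\mathcal L}_1^j)^{(k)} \subset (\tilde{\mathcal L}_1^{j+1})^{(k)}$, and using $\dim ({\mathcal L}_1^j)^{(k)} = \dim (\tilde{\mathcal L}_1^j)^{(k)}$ from the previous lemma this yields $\dim ({\mathcal L}_1^j)^{(k)} \leq \dim ({\mathcal L}_1^{j+1})^{(k)}$ for $k \in \{0,1,2\}$. Now I would split on the lexicographic comparison: if the two tuples first disagree in one of these three slots, monotonicity makes that inequality strict in the right direction and $I(j,1) < I(j+1,1)$ is immediate; if the three leading entries all coincide, then the hypothesis of Lemma \ref{lem:aux1} is met verbatim and its third bullet gives $I(j,1) \leq I(j+1,1)$ (concretely, the injective map $\varpi_j$ built there forces the fourth entries to satisfy $\dim_{{\mathcal M}_1} {\mathcal L}_1^j/\tilde{\mathcal L}_2^j \leq \dim_{{\mathcal M}_1} {\mathcal L}_1^{j+1}/\tilde{\mathcal L}_2^{j+1}$). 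Either way $I(j,1) \leq I(j+1,1)$, so the sequence is increasing.

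For the second statement I would assume $I(j,1) = I(j+1,1)$. If both tuples are $(0,0,0,0)$ then ${\mathcal L}_1^j = {\mathcal L}_1^{j+1} = 0$, whence ${\mathcal L}_2^j = 0 \subset {\mathcal L}_2^{j+1}$ trivially; otherwise the equality forces the three leading dimensions to agree, which is exactly the hypothesis of Lemma \ref{lem:aux1}, and its first bullet delivers ${\mathcal L}_2^j \subset {\mathcal L}_2^{j+1}$. Since all the substantial work (the inclusions $\tilde{\mathcal L}_2^j \subset \tilde{\mathcal L}_2^{j+1}$ and ${\mathcal L}_2^j \subset {\mathcal L}_2^{j+1}$, the equalities ${\mathcal M}_2^j = {\mathcal M}_2^{j+1}$ of fields of first integrals, and the injectivity of $\varpi_j$) is carried out inside Lemma \ref{lem:aux1}, I do not expect a real obstacle here: the only care needed is the bookkeeping of the lexicographic cases and the vanishing case, together with the consistency check that $I(j,1) = (0,0,0,0)$ happens precisely when ${\mathcal L}_1^j = 0$ (a nonzero ${\mathcal L}_1^j$ has positive first entry), so the degenerate branch cannot be confused with the generic one.
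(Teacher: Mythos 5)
Your proof is correct and is exactly the argument the paper intends: the corollary is stated without proof as an immediate consequence of Lemma \ref{lem:aux1}, and your fill-in (componentwise monotonicity of the three leading dimensions via $(\tilde{\mathcal L}_{1}^{j})^{(k)} \subset (\tilde{\mathcal L}_{1}^{j+1})^{(k)}$, the lexicographic case split, and invoking Lemma \ref{lem:aux1} when the leading entries coincide) is precisely the missing reasoning, including the correct handling of the degenerate case $I(j,1)=(0,0,0,0)$.
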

We define $C=(n+1)^{3}(n^{2}+1)$. We obtain
\begin{lem}
\label{lem:aux2}
Suppose $ I(l,1) =  I(l+1,1)$. Then we have
\begin{itemize}
\item ${\mathcal L}_{1}^{l} \subset {\mathcal L}_{1}^{l+1}$ and
$dim  ({\mathcal L}_{1}^{l})^{(k)} = \dim ({\mathcal L}_{1}^{l+1})^{(k)}$ for any $k \in \{0,1,2\}$.
\item $\dim {\mathcal L}_{2}^{l}= \dim {\mathcal L}_{2}^{l+1}$, $\tilde{\mathcal L}_{2}^{l} \subset \tilde{\mathcal L}_{2}^{l+1}$
and ${\mathcal L}_{2}^{l} \subset {\mathcal L}_{2}^{l+1}$.
\item ${\mathcal M}_{1}^{l} = {\mathcal M}_{1}^{l+1}$ and ${\mathcal M}_{2}^{l} = {\mathcal M}_{2}^{l+1}$.
\item Any   ${\mathcal M}'$-basis of ${\mathcal L}_{1}^{l}$ is
a  ${\mathcal M}'$-basis of ${\mathcal L}_{1}^{l+1}$.
\end{itemize}
Moreover, there exists $0 \leq l < C$ such that $ I(l,1) =  I(l+1,1)$.
\end{lem}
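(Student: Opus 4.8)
The plan is to read off the first three itemized properties directly from Lemma \ref{lem:aux1}, to deduce the fourth from a refinement of its proof, and then to settle the existence statement by bounding the number of distinct values of the invariant.

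First I would note that the hypothesis $I(l,1) = I(l+1,1)$ forces in particular the equality of the first three coordinates, namely $\dim ({\mathcal L}_{1}^{l})^{(k)} = \dim ({\mathcal L}_{1}^{l+1})^{(k)}$ for every $k \in \{0,1,2\}$. This is exactly the hypothesis of Lemma \ref{lem:aux1} with $j=l$, so the inclusions ${\mathcal L}_{1}^{l} \subset {\mathcal L}_{1}^{l+1}$, $\tilde{\mathcal L}_{2}^{l} \subset \tilde{\mathcal L}_{2}^{l+1}$, ${\mathcal L}_{2}^{l} \subset {\mathcal L}_{2}^{l+1}$, the equality $\dim {\mathcal L}_{2}^{l} = \dim {\mathcal L}_{2}^{l+1}$, and the equalities ${\mathcal M}_{1}^{l} = {\mathcal M}_{1}^{l+1}$ and ${\mathcal M}_{2}^{l} = {\mathcal M}_{2}^{l+1}$ are immediate. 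These supply the first three items.

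For the fourth item I would invoke the natural ${\mathcal M}_{1}$-linear map $\varpi_{l}: {\mathcal L}_{1}^{l}/\tilde{\mathcal L}_{2}^{l} \to {\mathcal L}_{1}^{l+1}/\tilde{\mathcal L}_{2}^{l+1}$ built in the proof of Lemma \ref{lem:aux1}, where ${\mathcal M}_{1} := {\mathcal M}_{1}^{l} = {\mathcal M}_{1}^{l+1}$, and which is shown there to be injective. The extra content of $I(l,1) = I(l+1,1)$ is the equality of the fourth coordinates $\dim_{{\mathcal M}_{1}} {\mathcal L}_{1}^{l}/\tilde{\mathcal L}_{2}^{l} = \dim_{{\mathcal M}_{1}} {\mathcal L}_{1}^{l+1}/\tilde{\mathcal L}_{2}^{l+1}$. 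An injective ${\mathcal M}_{1}$-linear map between vector spaces of the same finite dimension is an isomorphism, so $\varpi_{l}$ is an isomorphism and therefore carries bases to bases. Thus if $\{Z_{1}, \ldots, Z_{c}\} \subset {\mathcal L}_{1}^{l}$ is a ${\mathcal M}'$-basis of ${\mathcal L}_{1}^{l}$, its classes form a basis of ${\mathcal L}_{1}^{l}/\tilde{\mathcal L}_{2}^{l}$, their images under $\varpi_{l}$ form a basis of ${\mathcal L}_{1}^{l+1}/\tilde{\mathcal L}_{2}^{l+1}$, and since these images are the classes of the $Z_{k}$ viewed in ${\mathcal L}_{1}^{l+1} \supset {\mathcal L}_{1}^{l}$, the set $\{Z_{1}, \ldots, Z_{c}\}$ is a ${\mathcal M}'$-basis of ${\mathcal L}_{1}^{l+1}$.

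For the existence of $0 \leq l < C$, I would bound the number of distinct values of $I(j,1)$. By Corollary \ref{cor:ind} the sequence $(I(j,1))_{j \geq 0}$ is non-decreasing for the lexicographical order, so it can strictly increase fewer times than its number of distinct values; hence it suffices to show that $I(j,1)$ takes at most $C = (n+1)^{3}(n^{2}+1)$ values, since then among the $C$ comparisons for $l = 0, \ldots, C-1$ at least one must be an equality (otherwise the $C+1$ terms $I(0,1), \ldots, I(C,1)$ would all be distinct). The first three coordinates lie in $\{0, \ldots, n\}$, giving at most $(n+1)^{3}$ triples. The delicate point, which I expect to be the main obstacle, is that once these three coordinates are fixed the fourth one ranges over at most $n^{2}+1$ values: by Remark \ref{rem:precise} the index $a \in \{1,2\}$ and hence $\dim {\mathcal L}_{2}^{j}$ are determined by the first three coordinates, so $b_{1} = \dim {\mathcal L}_{1}^{j} - \dim {\mathcal L}_{2}^{j} \leq n$ is determined as well, and by Equation (\ref{equ:dimm}) the fourth coordinate equals $b_{1} + \dim_{{\mathcal M}_{1}^{j}} M({\mathcal L}_{1}^{j})$ with $\dim_{{\mathcal M}_{1}^{j}} M({\mathcal L}_{1}^{j}) \in \{0, 1, \ldots, b_{1}^{2}\}$, that is at most $b_{1}^{2}+1 \leq n^{2}+1$ possibilities. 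Multiplying the two counts yields the bound $C$ and finishes the proof.
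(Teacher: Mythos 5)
Your proof is correct and follows essentially the same route as the paper: the four itemized properties are obtained from Lemma \ref{lem:aux1} together with the map $\varpi_{l}$ (injective between spaces of equal finite dimension over ${\mathcal M}_{1}^{l}={\mathcal M}_{1}^{l+1}$, hence an isomorphism carrying ${\mathcal M}'$-bases to ${\mathcal M}'$-bases), and the existence of $l<C$ comes from the lexicographic monotonicity of Corollary \ref{cor:ind} plus a count of at most $(n+1)^{3}(n^{2}+1)$ possible values of $I(j,1)$. The only cosmetic difference is how the factor $n^{2}+1$ is justified: the paper bounds the variation of the fourth coordinate within a block of constant triple using Lemma \ref{lem:aux1} and Equation (\ref{equ:dimm}), whereas you parametrize it directly as $b_{1}+e_{1}$ with $b_{1}$ pinned down by Remark \ref{rem:precise} and $e_{1}\in\{0,\ldots,b_{1}^{2}\}$; both rest on the same ingredients and are equally valid.
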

\begin{proof}
All the items except the last one are consequence of Lemma \ref{lem:aux1}.
Since $\varpi_{l}: {\mathcal L}_{1}^{l}/\tilde{\mathcal L}_{2}^{l} \to {\mathcal L}_{1}^{l+1}/\tilde{\mathcal L}_{2}^{l+1}$
is injective by the proof of Lemma \ref{lem:aux1},
$\dim_{{\mathcal M}_{1}^{l}}  {\mathcal L}_{1}^{l}/\tilde{\mathcal L}_{2}^{l} =
\dim_{{\mathcal M}_{1}^{l+1}}  {\mathcal L}_{1}^{l+1}/\tilde{\mathcal L}_{2}^{l+1}$
and ${\mathcal M}_{1}^{l}={\mathcal M}_{1}^{l+1}$,
the map $\varpi_{l}$ is an isomorphism. Hence a
${\mathcal M}'$-basis of ${\mathcal L}_{1}^{l}$ is a ${\mathcal M}'$-basis
of ${\mathcal L}_{1}^{l+1}$.

Suppose that the first three coordinates of $I(q,1)$ does not change for
$b \leq q \leq c$.
The sequence
$(\dim_{{\mathcal M}_{1}} {\mathcal L}_{1}^{j}/\tilde{\mathcal L}_{2}^{j})_{b \leq j \leq c}$
is increasing by Lemma \ref{lem:aux1} where ${\mathcal M}_{1}={\mathcal M}_{1}^{j}$ for $b \leq j \leq c$.
Since
\[ 0 \leq \dim_{{\mathcal M}_{1}} {\mathcal L}_{1}^{k}/\tilde{\mathcal L}_{2}^{k}  -
\dim_{{\mathcal M}_{1}} {\mathcal L}_{1}^{j}/\tilde{\mathcal L}_{2}^{j} \leq n^{2} \]
for all $b \leq j \leq k \leq c$ by Equation (\ref{equ:dimm}),
the sequence $(\dim_{{\mathcal M}_{1}} {\mathcal L}_{1}^{j}/\tilde{\mathcal L}_{2}^{j})_{b \leq j \leq c}$
takes at most $n^{2}+1$ values. Hence the sequence $( I(j,1) )_{j \geq 0}$ takes at most
$C$ values. In particular
there exists $0 \leq l < C$ such that $ I(l,1)= I(l+1,1)$.
\end{proof}
By applying Lemma \ref{lem:aux2} at most $n$ times we obtain
\begin{pro}
\label{pro:staext}
Let $\tilde{\mathcal L}_{1}^{0} \subset \tilde{\mathcal L}_{1}^{1} \subset \ldots$ be an
increasing sequence of solvable Lie subalgebras of
$\hat{\mathfrak X} \cn{n} \bigotimes_{\mathbb C} \hat{K}_{n}$.
The sequence $( I(j))_{j \geq 0}$ is increasing in the lexicographical order. Moreover
there exists $0 \leq q < C^{n}$ such that $ I(q)= I(q+1)$. In particular we obtain
\begin{itemize}
\item ${\mathcal L}^{q} = {\mathcal L}_{1}^{q} + \ldots + {\mathcal L}_{m}^{q}$ and
${\mathcal L}^{q+1} = {\mathcal L}_{1}^{q} + \ldots + {\mathcal L}_{m}^{q+1}$.
\item  $\dim ({\mathcal L}_{j}^{q})^{(k)} = \dim ({\mathcal L}_{j}^{q+1})^{(k)}$
for all $1 \leq j \leq m$ and $k \in \{0,1,2\}$.
\item $\tilde{\mathcal L}_{j}^{q} \subset \tilde{\mathcal L}_{j}^{q+1}$,
${\mathcal L}_{j}^{q} \subset {\mathcal L}_{j}^{q+1}$ and
${\mathcal M}_{j}^{q}= {\mathcal M}_{j}^{q+1}$ for any $1 \leq j \leq m$.
\item Any ${\mathcal M}'$-basis of ${\mathcal L}_{j}^{q}$ is a ${\mathcal M}'$-basis of
${\mathcal L}_{j}^{q+1}$ for any $1 \leq j \leq m$.
\item Any ${\mathcal M}$-basis of ${\mathcal L}^{q}$ is a ${\mathcal M}$-basis of
${\mathcal L}^{q+1}$.
\item ${\mathcal L}^{q}={\mathcal L}^{q+1}$.
\end{itemize}
\end{pro}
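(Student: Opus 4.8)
The plan is to run the analysis of Lemma \ref{lem:aux2} one layer at a time, exploiting the self-similarity of the extension construction: for each $i$, the Lie algebras ${\mathcal L}_{j}^{i}, {\mathcal L}_{j+1}^{i},\ldots$ are built from $\tilde{\mathcal L}_{j}^{i}$ by exactly the same recursive procedure that produces ${\mathcal L}_{1}^{i}, {\mathcal L}_{2}^{i},\ldots$ from $\tilde{\mathcal L}_{1}^{i}$. Hence Lemmas \ref{lem:aux1} and \ref{lem:aux2} and Corollary \ref{cor:ind} hold verbatim for each layer $j$, with the indices $1,2$ replaced by $j,j+1$, as soon as one knows that $(\tilde{\mathcal L}_{j}^{i})_{i}$ is an increasing sequence. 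This is the content of the phrase ``applying Lemma \ref{lem:aux2} at most $n$ times'': each layer consumes one application, and there are at most $n$ layers since the $\hat{K}_{n}$-dimensions $\dim {\mathcal L}_{j}^{i}$ strictly decrease in $j$.

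First I would establish the lexicographic monotonicity of $(I(i))_{i \geq 0}$. Fix $i$ and compare $I(i)$ with $I(i+1)$ starting from the first layer. Corollary \ref{cor:ind} gives $I(i,1) \leq I(i+1,1)$; if this is strict then $I(i) < I(i+1)$ lexicographically and we stop, while if $I(i,1) = I(i+1,1)$ then Lemma \ref{lem:aux2} furnishes $\tilde{\mathcal L}_{2}^{i} \subset \tilde{\mathcal L}_{2}^{i+1}$, so the layer-$2$ analog of Corollary \ref{cor:ind} applies and yields $I(i,2) \leq I(i+1,2)$. Iterating across layers, either some layer produces a strict inequality, giving $I(i) < I(i+1)$, or all coincide, giving $I(i) = I(i+1)$; in both cases $I(i) \leq I(i+1)$.

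Next I would bound the number of values of $I$. Each layer invariant $I(i,j)$ is a $4$-tuple whose first three entries are dimensions in $\{0,\ldots,n\}$, and, as in the counting step of Lemma \ref{lem:aux2}, once those three entries are fixed the last entry runs over at most $n^{2}+1$ values by Equation (\ref{equ:dimm}); hence $I(i,j)$ ranges over at most $C=(n+1)^{3}(n^{2}+1)$ values. Since $I(i)=(I(i,1),\ldots,I(i,n))$ has $n$ slots (padded with $(0,0,0,0)$ beyond the nonzero layers), the tuple $I(i)$ takes at most $C^{n}$ values. Being non-decreasing in the lexicographic order and taking at most $C^{n}$ values, the finite chain $I(0) \leq I(1) \leq \ldots \leq I(C^{n})$ cannot consist of $C^{n}+1$ distinct terms, so by pigeonhole there is $0 \leq q < C^{n}$ with $I(q)=I(q+1)$.

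Finally, given such a $q$, I would read off the ``in particular'' conclusions by finite induction on the layer. The equality $I(q,1)=I(q+1,1)$ feeds Lemma \ref{lem:aux2} to produce the layer-$1$ dimension equalities, ${\mathcal M}_{1}^{q}={\mathcal M}_{1}^{q+1}$ and ${\mathcal M}_{2}^{q}={\mathcal M}_{2}^{q+1}$, a common ${\mathcal M}'$-basis for ${\mathcal L}_{1}$, and the inclusion $\tilde{\mathcal L}_{2}^{q} \subset \tilde{\mathcal L}_{2}^{q+1}$; this inclusion together with $I(q,2)=I(q+1,2)$ is precisely the hypothesis for the layer-$2$ version, and so on up to layer $m$. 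The per-layer equalities $\dim {\mathcal L}_{j}^{q}=\dim {\mathcal L}_{j}^{q+1}$ force ${\mathcal L}_{j}^{q}$ and ${\mathcal L}_{j}^{q+1}$ to vanish simultaneously, whence $m_{q}=m_{q+1}=:m$. Assembling the per-layer ${\mathcal M}'$-bases gives a common ${\mathcal M}$-basis ${\mathcal B}$ of ${\mathcal L}^{q}$ and ${\mathcal L}^{q+1}$, and since also ${\mathcal M}_{j}^{q}={\mathcal M}_{j}^{q+1}$ for all $j$, the equality ${\mathcal L}^{q}={\mathcal L}^{q+1}$ follows from the characterization in Proposition \ref{pro:uni}: both algebras are exactly the vector fields admitting a ${\mathcal M}$-decomposition $\sum_{j}\sum_{k}\gamma_{k}^{j}Z_{k}^{j}$ on ${\mathcal B}$ with coefficients $\gamma_{k}^{j}$ in the common fields ${\mathcal M}_{j}$. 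I expect the main obstacle to be the bookkeeping that keeps the self-similar hypotheses available at each layer, namely verifying that the inclusion of $\tilde{\mathcal L}_{j+1}$ produced at step $j$ is exactly what licenses the step-$(j+1)$ application of Lemma \ref{lem:aux2}, and that the invariant comparisons stay valid once the coefficient field ${\mathcal M}_{j}$ is shown to be shared by both sequences.
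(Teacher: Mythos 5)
Your proposal is correct and follows essentially the same route as the paper's own proof: iterating Lemma \ref{lem:aux2} (via Corollary \ref{cor:ind}) layer by layer, using the self-similarity of the extension construction, to get lexicographic monotonicity of $(I(j))_{j\geq 0}$ and the existence of $q < C^{n}$ with $I(q)=I(q+1)$, and then deducing ${\mathcal L}^{q}={\mathcal L}^{q+1}$ from the common ${\mathcal M}$-basis, the equalities ${\mathcal M}_{j}^{q}={\mathcal M}_{j}^{q+1}$, and the characterization of ${\mathcal L}$ by ${\mathcal M}$-decompositions in Proposition \ref{pro:uni}. Indeed your write-up spells out the hierarchical counting and the per-layer bookkeeping more explicitly than the paper does.
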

\begin{proof}
Corollary \ref{cor:ind} implies that we can apply iteratively Lemma \ref{lem:aux2}
to obtain the increasing nature of $( I(j))_{j \geq 0}$ and
the existence of $q$ satisfying all
properties but the last one.   Let us prove ${\mathcal L}^{q}={\mathcal L}^{q+1}$.
We choose a ${\mathcal M}$-basis ${\mathcal B}$ of ${\mathcal L}^{q}$.
It is also a ${\mathcal M}$-basis of ${\mathcal L}^{q+1}$.
Since ${\mathcal M}_{j}^{q}={\mathcal M}_{j}^{q+1}$ for any $1 \leq j \leq m$,
the set of ${\mathcal M}$-decompositions with respect to ${\mathcal L}^{q}$ and
${\mathcal B}$ coincides with the set of
${\mathcal M}$-decompositions with respect to ${\mathcal L}^{q+1}$ and
${\mathcal B}$. Proposition \ref{pro:uni} implies ${\mathcal L}^{q}={\mathcal L}^{q+1}$.
\end{proof}
\begin{rem}
Notice that if $ I(j,k)= I(j+1,k)$ then the first coordinates of
$ I(j,k+1)$ and $ I(j+1,k+1)$ coincide by Lemma \ref{lem:aux2}.
As a consequence we can replace $C^{n}$ in Proposition \ref{pro:staext}
with $n (n^{2} (n^{2}+1))^{n}$ and the result still holds.
\end{rem}
\subsection{Solvability of $p$-pseudo-solvable subgroups}
This section is devoted to prove the following theorem:
\begin{teo}
\label{teo:psis}
Let $G \subset \mathrm{Diff}_{u}({\mathbb C}^{n},0)$ be a finitely generated group  
and $p \geq C^{n}$.
Suppose that $G$ is $p$-pseudo-solvable for some finite generator set  ${\mathcal S}$.
Then $G$ is solvable.
\end{teo}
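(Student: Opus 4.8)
The plan is to prove the theorem by a downward induction that converts the combinatorial normalizing relations produced by the commutator calculus into genuine normalizing relations for a solvable Lie algebra of formal vector fields, and then to feed these into the classification machinery of the previous sections. First I would set up the induction. Since $G$ is $p$-pseudo-solvable for $\mathcal{S}$, let $m$ be the least integer with $\mathcal{S}(m)=\{Id\}$; as $[Id,g]=Id$ one has $\mathcal{S}(l)=\{Id\}$ for all $l\ge m$. Writing $\mathcal{S}(j,k)=\cup_{l=j}^{k}\mathcal{S}(l)$, $\Gamma(j)=\langle\mathcal{S}(j,m)\rangle$ and $G(j+1,j+1+r)=\langle\mathcal{S}(j+1,j+1+r)\rangle$, every $\mathcal{S}(l)$ lies in $G$, so $\Gamma(0)=G$, while $\Gamma(m)=\{Id\}$. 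Thus it suffices to show, for $0\le j<m$, that if $\Gamma(j+1)$ is solvable then $\Gamma(j)$ is solvable.

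Fix such a $j$ and set $J(r)=G(j+1,j+1+r)$ for $0\le r\le p$. Each $J(r)$ is a subgroup of the solvable group $\Gamma(j+1)$, hence solvable, and $J(0)\subset J(1)\subset\dots\subset J(p)$. As in Remark \ref{rem:extg} I would put $\tilde{\mathcal{L}}_1^{r}=L(\overline{J(r)}^{z})$, obtaining an increasing sequence of solvable Lie subalgebras of $\hat{\mathfrak X}\cn{n}\bigotimes_{\mathbb C}\hat{K}_n$, and form the associated extension Lie algebras $\mathcal{L}^{r}$ of Proposition \ref{pro:sla}. Because $p\ge C^{n}$, Proposition \ref{pro:staext} applies: the list of invariants $I(r)$ is increasing in the lexicographical order and takes fewer than $C^{n}$ values, so there exists $0\le q<C^{n}\le p$ with $I(q)=I(q+1)$, and hence $\mathcal{L}^{q}=\mathcal{L}^{q+1}$.

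The core step is to show that every $\phi\in\mathcal{S}(j)$ normalizes the solvable Lie algebra $\mathcal{L}^{q}$. Equation (\ref{equ:nor}) gives $\phi\,J(q)\,\phi^{-1}\subset J(q+1)$, which on pro-algebraic closures and then on their Lie algebras yields $\phi_{*}\tilde{\mathcal{L}}_1^{q}\subset\tilde{\mathcal{L}}_1^{q+1}$, where $\phi_{*}$ is the pushforward by $\phi$. The key point is that $\phi_{*}$ is an automorphism of $\hat{\mathfrak X}\cn{n}\bigotimes_{\mathbb C}\hat{K}_n$ preserving $\hat{K}_n$, the derived series, and the field of meromorphic first integrals; hence the whole extension construction is natural, $\phi_{*}\mathcal{L}^{q}$ is the extension of $\phi_{*}\tilde{\mathcal{L}}_1^{q}$, and all the dimensions defining $I$ are preserved, giving $I(\phi_{*}\tilde{\mathcal{L}}_1^{q})=I(q)=I(q+1)$. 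Applying the uniqueness mechanism behind Proposition \ref{pro:staext} (namely Lemma \ref{lem:aux2} together with Proposition \ref{pro:uni}) to the two-term chain $\phi_{*}\tilde{\mathcal{L}}_1^{q}\subset\tilde{\mathcal{L}}_1^{q+1}$ of solvable algebras with equal invariant, the two extensions coincide, so $\phi_{*}\mathcal{L}^{q}=\mathcal{L}^{q+1}=\mathcal{L}^{q}$; that is, $\phi$ normalizes $\mathcal{L}^{q}$.

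Finally I would assemble solvability of $\Gamma(j)$. Since the normalizer of $\mathcal{L}^{q}$ in $\diffh{u}{n}$ is pro-algebraic and each $\phi\in\mathcal{S}(j)$ is unipotent, $\log\phi$ lies in the Lie algebra of that normalizer, so $[\log\phi,\mathcal{L}^{q}]\subset\mathcal{L}^{q}$ and $\mathcal{R}:=\mathcal{L}^{q}+\sum_{\phi\in\mathcal{S}(j)}\mathbb{C}\,\log\phi$ is a Lie algebra in which $\mathcal{L}^{q}$ is a solvable ideal; the quotient is controlled because the group commutators $[\phi,\phi']$ of elements of $\mathcal{S}(j)$ lie in $\mathcal{S}(j+1)\subset J(q)$ and so have logarithms in $\tilde{\mathcal{L}}_1^{q}\subset\mathcal{L}^{q}$, whence $\mathcal{R}$ is solvable. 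It then remains to check that the logarithms of a generating set of $\Gamma(j)$ lie in the Krull closure of $\mathcal{R}$, so that $L(\overline{\Gamma(j)}^{z})$ is solvable and $\Gamma(j)$ is solvable by Lemma \ref{lem:bas}. I expect this last bookkeeping to be the main obstacle: the generators $\mathcal{S}(l)$ of $\Gamma(j+1)$ with $l>j+1+q$ are not directly contained in $J(q)$, and placing their logarithms inside $\mathcal{R}$ requires propagating the normalizing property through the commutator recursion — using, for instance, that $\log[\psi,\phi]\in\mathcal{L}^{q}$ whenever $\psi\in J(q)$ and $\phi\in\mathcal{S}(j)$, since then $\phi\psi^{-1}\phi^{-1}=\exp(-\phi_{*}\log\psi)$ with $\phi_{*}\log\psi\in\mathcal{L}^{q}$ — while converting group-level commutator identities into Lie-algebra brackets; this is exactly the stage at which the maximal-like character of the extension and the equality $\mathcal{L}^{q}=\mathcal{L}^{q+1}$ must be exploited.
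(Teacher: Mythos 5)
Your first two stages coincide with the paper's proof: the downward induction on $\Gamma(j)=\langle {\mathcal S}(j,m)\rangle$, the extension Lie algebras of the increasing chain $J(r)=G(j+1,j+1+r)$ inside the solvable $\Gamma(j+1)$, and the stabilization ${\mathcal L}^{q}={\mathcal L}^{q+1}$ for some $q<C^{n}\leq p$ via Proposition \ref{pro:staext} are exactly the paper's steps. Your normalization argument is a legitimate variant: the paper's Lemma \ref{lem:nor1} proves $\varphi_{*}{\mathcal L}^{q}={\mathcal L}^{q}$ by pushing the inclusion level by level ($\varphi_{*}\tilde{\mathcal L}_{l}^{q}\subset\tilde{\mathcal L}_{l}^{q+1}$ and ${\mathcal M}_{l}\circ\varphi={\mathcal M}_{l}$, then using $\varphi^{-1}$), whereas you invoke naturality of the extension construction under $\phi_{*}$ together with the equal-invariants criterion of Lemma \ref{lem:aux2} and Proposition \ref{pro:uni}; both work, but note that the paper establishes normalization for every $\varphi\in{\mathcal S}(q-p,q)$, not only for $\varphi\in{\mathcal S}(j)$, and this wider range is precisely what the subsequent propagation requires.

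The genuine gap is the final assembly, which you yourself flag as unresolved, and it is not mere bookkeeping. Your ${\mathcal R}={\mathcal L}^{q}+\sum_{\phi\in{\mathcal S}(j)}{\mathbb C}\log\phi$ is not known to be a Lie algebra, let alone solvable: for ${\mathcal R}/{\mathcal L}^{q}$ to be abelian you need the Lie bracket $[\log\phi,\log\phi']$ to lie in ${\mathcal L}^{q}$, while what you actually know is that the group commutator $[\phi,\phi']\in{\mathcal S}(j+1)$ has its logarithm in $\tilde{\mathcal L}_{1}^{q}$; by Baker--Campbell--Hausdorff these two objects differ by an infinite sum of higher-order brackets, and no argument controls that difference. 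Likewise the generators in ${\mathcal S}(l)$ with $l>j+1+q$ are never placed inside your algebra. The paper avoids both problems by never converting group commutators into brackets: it sets ${\mathfrak h}={\mathfrak g}\cap{\mathcal L}$ with ${\mathfrak g}=L(\overline{G}^{z})$, takes the Krull closure $\overline{\mathfrak h}^{k}$, and uses the unipotent Lie correspondence (Proposition \ref{pro:solvg}) to conclude that $\mathrm{exp}(\overline{\mathfrak h}^{k})$ is a solvable pro-algebraic \emph{group}. Lemma \ref{lem:nor2} then runs an induction on $l\geq q$: every element of ${\mathcal S}(l+1)$ is $(\varphi\circ\phi\circ\varphi^{-1})\circ\phi^{-1}$ with $\phi\in{\mathcal S}(l)\subset\mathrm{exp}(\overline{\mathfrak h}^{k})$ and $\varphi_{*}\overline{\mathfrak h}^{k}=\overline{\mathfrak h}^{k}$, hence lies in the group $\mathrm{exp}(\overline{\mathfrak h}^{k})$; all manipulations are group operations inside a set already known to be a group. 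The induction then gives $\Gamma(m+1)\subset\mathrm{exp}(\overline{\mathfrak h}^{k})$ and normality of $\mathrm{exp}(\overline{\mathfrak h}^{k})\cap\Gamma(q-p)$ in $\Gamma(q-p)$, so $\Gamma(m)^{(1)}\subset\mathrm{exp}(\overline{\mathfrak h}^{k})$ (commutators of ${\mathcal S}(m)$ lie in ${\mathcal S}(m+1)$) and $\Gamma(m)$ is solvable. Without this group-level device, or some substitute giving genuine bracket control, your last step does not close.
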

Let $m_{0} \geq 0$ be the first index such that ${\mathcal S}(m_{0})=\{Id\}$.
We define
\[ {\mathcal S}(j,k) = {\mathcal S}(j) \cup {\mathcal S}(j+1) \cup \ldots \cup {\mathcal S}(k), \]
$G(j,k) = \langle {\mathcal S}(j,k) \rangle$ and
$\Gamma (l) = G(l, m_{0})$
for $0 \leq j \leq k$ and $0 \leq l \leq m_{0}$. Our goal is proving that
$\Gamma (0)$ is solvable. It is obvious that $\Gamma (m_{0})$ is solvable
since it is the trivial group. We will show that whenever $\Gamma (m+1)$ is
solvable for $0 \leq m \leq m_{0}-1$ then $\Gamma (m)$ is solvable.

Let  ${\mathcal L}^{j}$ be the extension Lie algebra associated to
$G(m+1, j)$ (where $\tilde{\mathcal L}_{1}^{j} := L(\overline{G(m+1, j)}^{z})$) for $j \geq m+1$.
Since $(G(m+1,j))_{j \geq m+1}$ is an increasing
sequence of subgroups of the solvable group $\overline{\Gamma(m+1)}^{z}$ (cf. Lemma \ref{lem:bas}), 
there exists $m+1 \leq q \leq m+p$ such that
${\mathcal L}^{q} = {\mathcal L}^{q+1}$ by Remark \ref{rem:extg} and Proposition \ref{pro:staext}.
Moreover we have ${\mathcal L}^{q} = {\mathcal L}_{1}^{q} + \ldots + {\mathcal L}_{m}^{q}$,
${\mathcal L}^{q+1} = {\mathcal L}_{1}^{q+1} + \ldots + {\mathcal L}_{m}^{q+1}$
and ${\mathcal L}^{q}$ and ${\mathcal L}^{q+1}$ satisfy all conditions in
Proposition \ref{pro:staext}. We denote ${\mathcal L}={\mathcal L}^{q}$ and
${\mathcal M}_{j} = {\mathcal M}_{j}^{q}$.

The idea is extending $\varphi G(m+1,q) \varphi^{-1} \subset G(m+1,q+1)$
for $\varphi \in {\mathcal S}(q-p,q)$ to the extension Lie algebras ${\mathcal L}^{q}$ and ${\mathcal L}^{q+1}$.
This is natural since
we enlarge the set of coefficients by adding first integrals of Lie subalgebras canonically associated
to the initial Lie algebra.
\begin{lem}
\label{lem:nor1}
We have ${\mathcal M}_{j} \circ \varphi = {\mathcal M}_{j}$,
$\varphi_{*}  \tilde{\mathcal L}_{j}^{q} \subset \tilde{\mathcal L}_{j}^{q+1}$ and
$\varphi_{*}  {\mathcal L}_{j}^{q} \subset {\mathcal L}_{j}^{q+1}$
for all $1 \leq j \leq m$ and $\varphi \in {\mathcal S}(q-p,q)$.
In particular $\varphi_{*} {\mathcal L}={\mathcal L}$ for any
$\varphi \in {\mathcal S}(q-p,q)$.
\end{lem}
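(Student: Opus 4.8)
The plan is to transport the group-level normalization $\varphi\, G(m+1,q)\, \varphi^{-1} \subset G(m+1,q+1)$ — valid for every $\varphi \in {\mathcal S}(q-p,q)$, and, since ${\mathcal S}(q-p,q)$ is closed under inverses, for $\varphi^{-1}$ as well — down to the associated Lie algebras and their fields of first integrals, and then to propagate it through the inductive construction of the extension Lie algebra. The starting point is that conjugation $c_{\varphi}: \psi \mapsto \varphi \psi \varphi^{-1}$ induces an algebraic automorphism of each $D_{k}$, so it commutes with Zariski closure; hence $\varphi\, \overline{G(m+1,q)}^{z}\, \varphi^{-1} \subset \overline{G(m+1,q+1)}^{z}$. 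Applying $\log$, which by Lemma \ref{lem:bas} is a bijection onto the Lie algebra and satisfies $\log(\varphi\psi\varphi^{-1}) = \varphi_{*}\log\psi$, I would obtain $\varphi_{*} \tilde{\mathcal L}_{1}^{q} \subset \tilde{\mathcal L}_{1}^{q+1}$, together with the analogous inclusion for $\varphi^{-1}$.

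Next I would establish ${\mathcal M}_{1} \circ \varphi = {\mathcal M}_{1}$. Using the identity $X(g\circ\varphi) = \big((\varphi_{*}X)(g)\big)\circ\varphi$, for $g \in {\mathcal M}_{1} = {\mathcal M}(\tilde{\mathcal L}_{1}^{q+1})$ (recall ${\mathcal M}_{1}^{q} = {\mathcal M}_{1}^{q+1}$ by Proposition \ref{pro:staext}) and $X \in \tilde{\mathcal L}_{1}^{q}$, the field $\varphi_{*}X$ lies in $\tilde{\mathcal L}_{1}^{q+1}$ and hence annihilates $g$, so $X(g\circ\varphi)=0$ and $g\circ\varphi \in {\mathcal M}(\tilde{\mathcal L}_{1}^{q})={\mathcal M}_{1}$. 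This gives ${\mathcal M}_{1}\circ\varphi \subset {\mathcal M}_{1}$, and the same argument for $\varphi^{-1}$ yields the reverse containment, whence ${\mathcal M}_{1}\circ\varphi={\mathcal M}_{1}$.

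The core is then an induction on $j$. Assuming $\varphi_{*}\tilde{\mathcal L}_{j}^{q} \subset \tilde{\mathcal L}_{j}^{q+1}$ and ${\mathcal M}_{j}\circ\varphi = {\mathcal M}_{j}$, I would first deduce $\varphi_{*}{\mathcal L}_{j}^{q}\subset {\mathcal L}_{j}^{q+1}$ from the Leibniz rule $\varphi_{*}(hX)=(h\circ\varphi^{-1})(\varphi_{*}X)$, since $h\circ\varphi^{-1}\in{\mathcal M}_{j}$, $\varphi_{*}X\in\tilde{\mathcal L}_{j}^{q+1}$, and ${\mathcal L}_{j}^{q+1}=\tilde{\mathcal L}_{j}^{q+1}\bigotimes_{\mathbb C}{\mathcal M}_{j}$. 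To pass to level $j+1$ I would use Remark \ref{rem:precise}: $\tilde{\mathcal L}_{j+1}^{q} = \big(({\mathcal L}_{j}^{q})^{(a)}\bigotimes_{\mathbb C}\hat{K}_{n}\big)\cap{\mathcal L}_{j}^{q}$ for the first $a\in\{1,2\}$ with a dimension drop, this $a$ being the same for $q$ and $q+1$ by Proposition \ref{pro:staext}. Since $\varphi_{*}$ is a Lie-algebra automorphism it commutes with the derived series, so $\varphi_{*}({\mathcal L}_{j}^{q})^{(a)}\subset({\mathcal L}_{j}^{q+1})^{(a)}$; and since $g\mapsto g\circ\varphi^{-1}$ is an automorphism of $\hat{K}_{n}$, $\varphi_{*}$ carries $({\mathcal L}_{j}^{q})^{(a)}\bigotimes_{\mathbb C}\hat{K}_{n}$ into $({\mathcal L}_{j}^{q+1})^{(a)}\bigotimes_{\mathbb C}\hat{K}_{n}$. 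Intersecting with ${\mathcal L}_{j}^{q+1}\supset\varphi_{*}{\mathcal L}_{j}^{q}$ yields $\varphi_{*}\tilde{\mathcal L}_{j+1}^{q}\subset\tilde{\mathcal L}_{j+1}^{q+1}$, and rerunning the first-integral argument (with $\varphi$ and $\varphi^{-1}$) gives ${\mathcal M}_{j+1}\circ\varphi={\mathcal M}_{j+1}$, closing the induction. Summing over $j$ and using ${\mathcal L}={\mathcal L}^{q}={\mathcal L}^{q+1}=\sum_{j}{\mathcal L}_{j}^{q+1}$ gives $\varphi_{*}{\mathcal L}\subset{\mathcal L}$, and applying this to $\varphi^{-1}$ forces $\varphi_{*}{\mathcal L}={\mathcal L}$.

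The hardest part will be the bookkeeping of the two base fields: $\varphi_{*}$ is only $\mathbb{C}$-linear, so I must repeatedly exploit that composition with $\varphi$ is a field automorphism of $\hat{K}_{n}$ restricting to an automorphism of each ${\mathcal M}_{j}$, in order to verify that $\varphi_{*}$ respects the tensor-product descriptions $\tilde{\mathcal L}_{j}\bigotimes_{\mathbb C}{\mathcal M}_{j}$ and $({\mathcal L}_{j})^{(a)}\bigotimes_{\mathbb C}\hat{K}_{n}$. Getting these compatibilities exactly right — together with the equality of the indices $a$ and of the fields ${\mathcal M}_{j}$ across $q$ and $q+1$ supplied by Proposition \ref{pro:staext} — is where the genuine care is required; the group-theoretic and first-integral steps are then essentially formal.
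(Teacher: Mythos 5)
Your proposal is correct and follows essentially the same route as the paper's proof: the group-level normalization $\varphi\, G(m+1,q)\, \varphi^{-1} \subset G(m+1,q+1)$ is pushed to Zariski closures and then, via $\log$, to $\varphi_{*}\tilde{\mathcal L}_{1}^{q} \subset \tilde{\mathcal L}_{1}^{q+1}$, followed by an induction on the level $j$ using Remark \ref{rem:precise} and the conclusions of Proposition \ref{pro:staext}, and the final equality $\varphi_{*}{\mathcal L}={\mathcal L}$ is forced by the inverse-closure of ${\mathcal S}(q-p,q)$. The only cosmetic difference is that you obtain ${\mathcal M}_{j}\circ\varphi={\mathcal M}_{j}$ by a two-sided first-integral computation (running the argument for both $\varphi$ and $\varphi^{-1}$ and invoking ${\mathcal M}_{j}^{q}={\mathcal M}_{j}^{q+1}$), whereas the paper deduces it from the equality $\varphi_{*}(\tilde{\mathcal L}_{j}^{q}\bigotimes_{\mathbb C}\hat{K}_{n})=\tilde{\mathcal L}_{j}^{q+1}\bigotimes_{\mathbb C}\hat{K}_{n}$ forced by the dimension equalities; both derivations are valid and interchangeable.
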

\begin{proof}
We have
$\varphi G(m+1,q) \varphi^{-1} \subset G(m+1,q+1)$ for any $\varphi \in {\mathcal S}(q-p,q)$ by
the definition of the sets ${\mathcal S}(l)$ for $l \geq 0$.
We obtain
\[ \varphi \overline{G(m+1,q)}^{z}  \varphi^{-1} \subset \overline{G(m+1,q+1)}^{z} \]
and
$\varphi_{*} L(\overline{G(m+1,q)}^{z}) \subset L(\overline{G(m+1,q+1)}^{z})$
for any $\varphi \in {\mathcal S}(q-p,q)$.
Equivalently we have $\varphi_{*} \tilde{\mathcal L}_{1}^{q} \subset  \tilde{\mathcal L}_{1}^{q+1}$
for any $\varphi \in {\mathcal S}(q-p,q)$.

Since $\dim \tilde{\mathcal L}_{1}^{q} = \dim \tilde{\mathcal L}_{1}^{q+1}$, we deduce
$\varphi_{*} (\tilde{\mathcal L}_{1}^{q} \bigotimes_{\mathbb C} \hat{K}_{n}) =
\tilde{\mathcal L}_{1}^{q+1} \bigotimes_{\mathbb C} \hat{K}_{n}$. In particular we obtain
${\mathcal M}_{1} \circ \varphi = {\mathcal M}_{1}$ for any $\varphi \in {\mathcal S}(q-p,q)$.
The properties $\varphi_{*} \tilde{\mathcal L}_{1}^{q} \subset  \tilde{\mathcal L}_{1}^{q+1}$ and
$\varphi_{*} {\mathcal M}_{1}= {\mathcal M}_{1}$ imply
$\varphi_{*} {\mathcal L}_{1}^{q} \subset  {\mathcal L}_{1}^{q+1}$ for any $\varphi \in {\mathcal S}(q-p,q)$.

Fix $\varphi \in {\mathcal S}(q-p,q)$.
Let us prove next that
$\varphi_{*} {\mathcal L}_{j}^{q} \subset {\mathcal L}_{j}^{q+1}$ implies
$\varphi_{*} {\mathcal M}_{j+1}= {\mathcal M}_{j+1}$,
$\varphi^{*} \tilde{\mathcal L}_{j+1}^{q} \subset \tilde{\mathcal L}_{j+1}^{q+1}$ and
$\varphi_{*} {\mathcal L}_{j+1}^{q} \subset {\mathcal L}_{j+1}^{q+1}$ for any $1 \leq j < m$.
Since $\varphi_{*} {\mathcal L}_{j}^{q} \subset {\mathcal L}_{j}^{q+1}$ we obtain
$\varphi_{*} ({\mathcal L}_{j}^{q})^{(k)} \subset ({\mathcal L}_{j}^{q+1})^{(k)}$ for $k \geq 0$.
Since $\dim ({\mathcal L}_{j}^{q})^{(k)} = \dim ({\mathcal L}_{j}^{q+1})^{(k)}$ for $k \in \{0,1,2\}$,
the equality $({\mathcal L}_{j}^{q})^{(k)} \bigotimes_{\mathbb C} \hat{K}_{n}=
({\mathcal L}_{j}^{q+1})^{(k)} \bigotimes_{\mathbb C} \hat{K}_{n}$ holds for $k \in \{0,1,2\}$.
There exists $k' \in \{1,2\}$ such that
\[ \tilde{\mathcal L}_{j+1}^{l} = {\mathcal L}_{j}^{l}  \cap (({\mathcal L}_{j}^{l})^{(k')} \bigotimes_{\mathbb C} \hat{K}_{n})
\ \mathrm{and} \ {\mathcal M}_{j+1}^{l}= {\mathcal M}(({\mathcal L}_{j}^{l})^{(k')} )  \]
for any $l \in \{q,q+1\}$. Since $\varphi_{*} (({\mathcal L}_{j}^{q})^{(k')} \bigotimes_{\mathbb C} \hat{K}_{n})=
({\mathcal L}_{j}^{q+1})^{(k')} \bigotimes_{\mathbb C} \hat{K}_{n}$ we deduce
${\mathcal M}_{j+1} \circ \varphi = {\mathcal M}_{j+1}$ and
$\varphi^{*} \tilde{\mathcal L}_{j+1}^{q} \subset \tilde{\mathcal L}_{j+1}^{q+1}$.
These properties lead to $\varphi^{*} {\mathcal L}_{j+1}^{q} \subset {\mathcal L}_{j+1}^{q+1}$
for any $\varphi \in {\mathcal S}(q-p,q)$.

Given $\varphi \in {\mathcal S}(q-p,q)$, we have
\[ \varphi_{*} {\mathcal L} = \varphi_{*}( {\mathcal L}_{1}^{q} + \ldots + {\mathcal L}_{m}^{q}) \subset
{\mathcal L}_{1}^{q+1} + \ldots + {\mathcal L}_{m}^{q+1} = {\mathcal L}. \]
We remind that $\varphi^{-1} \in {\mathcal S}(q-p,q)$ if $\varphi \in {\mathcal S}(q-p,q)$.
Hence the properties $\varphi_{*} {\mathcal L} \subset {\mathcal L}$ and $(\varphi^{-1})_{*} {\mathcal L} \subset {\mathcal L}$
imply $\varphi_{*} {\mathcal L} = {\mathcal L}$ for any $\varphi \in {\mathcal S}(q-p,q)$.
\end{proof}
Consider the Lie algebra ${\mathfrak g}$ of the pro-algebraic group $\overline{G}^{z}$.
We define ${\mathfrak h} = {\mathfrak g} \cap {\mathcal L}$.
We denote by $\overline{\mathfrak h}^{k}$ the closure of ${\mathfrak h}$ in the Krull topology
(it can be proved that $\overline{\mathfrak h}^{k}$ is equal to ${\mathfrak h}$ but this result
will not be necessary in the following). Remark \ref{rem:krull} implies that
$\overline{\mathfrak h}^{k}$ is a complex Lie subalgebra of ${\mathfrak g}$.
%
\begin{pro}
\label{pro:solvg}
The set $\mathrm{exp}(\overline{\mathfrak h}^{k}): = \{ \mathrm{exp} (X) : X \in \overline{\mathfrak h}^{k} \}$
is a solvable pro-algebraic subgroup of $\overline{G}^{z}$ with Lie algebra $\overline{\mathfrak h}^{k}$.
\end{pro}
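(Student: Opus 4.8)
The plan is to analyze $\exp(\overline{\mathfrak h}^{k})$ one jet level at a time and realize it as a projective limit of unipotent algebraic groups. First I would record two facts about the Krull topology. Writing $\pi_{k} : \hat{\mathfrak X} \cn{n} \to L_{k}$ for the truncation to $k$-jets, a linear subspace $V$ has Krull closure $\overline{V}^{k} = \{ X : \pi_{k}(X) \in \pi_{k}(V) \ \forall k \}$, since a basic neighbourhood of $X$ is $\{ Y : \pi_{k}(Y) = \pi_{k}(X)\}$. As $\overline{\mathfrak h}^{k}$ is already Krull-closed (Remark \ref{rem:krull} and the discussion preceding the statement), it equals $\{ X : \pi_{k}(X) \in {\mathfrak h}_{k} \ \forall k\}$, where ${\mathfrak h}_{k} := \pi_{k}(\overline{\mathfrak h}^{k})$. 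Because $\overline{\mathfrak h}^{k} \subset {\mathfrak g} \subset \hat{\mathfrak X}_{N} \cn{n}$, each ${\mathfrak h}_{k}$ is a finite-dimensional Lie subalgebra of $L_{k}$ made of nilpotent elements; by Engel's theorem it is nilpotent, and $\exp$ restricts to a polynomial isomorphism of ${\mathfrak h}_{k}$ onto a Zariski-closed unipotent algebraic subgroup $H_{k}$ of $\mathrm{GL}({\mathfrak m}/{\mathfrak m}^{k+1})$ with $\mathrm{Lie}(H_{k}) = {\mathfrak h}_{k}$.

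Next I would prove that $\exp(\overline{\mathfrak h}^{k})$ consists exactly of the unipotent formal diffeomorphisms $\phi$ with $\phi_{k} \in H_{k}$ for every $k$. This is immediate from the closure characterization together with the compatibility $\pi_{k}(\exp X) = \exp(\pi_{k} X)$ of $\exp$ with truncation: if $X \in \overline{\mathfrak h}^{k}$ then $\phi_{k} = \exp(\pi_{k} X) \in \exp({\mathfrak h}_{k}) = H_{k}$, and conversely $\phi_{k} \in H_{k}$ for all $k$ forces $\pi_{k}(\log\phi) = \log(\phi_{k}) \in {\mathfrak h}_{k}$ for all $k$, whence $\log\phi \in \overline{\mathfrak h}^{k}$. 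Since the transition maps ${\mathfrak h}_{k+1} \to {\mathfrak h}_{k}$ are surjective by construction, so are $H_{k+1} \to H_{k}$, and hence the projections of the inverse limit onto each $H_{k}$ are surjective; thus $\{ \phi_{k} : \phi \in \exp(\overline{\mathfrak h}^{k})\} = H_{k}$. As each $H_{k}$ is Zariski-closed this shows $\exp(\overline{\mathfrak h}^{k})$ is pro-algebraic, and since every $H_{k}$ is a group so is the projective limit. Finally $\exp(\overline{\mathfrak h}^{k}) \subset \overline{G}^{z} = \varprojlim G_{k}$: from ${\mathfrak h}_{k} = \pi_{k}(\overline{\mathfrak h}^{k}) \subset \pi_{k}({\mathfrak g}) \subset \mathrm{Lie}(G_{k})$ and the unipotence of $G_{k}$ (so $\exp(\mathrm{Lie}(G_{k})) = G_{k}$) we get $H_{k} = \exp({\mathfrak h}_{k}) \subset G_{k}$.

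It remains to identify the Lie algebra and to prove solvability. Since $\exp(\overline{\mathfrak h}^{k})$ is pro-algebraic and unipotent, Lemma \ref{lem:bas} gives $L(\exp(\overline{\mathfrak h}^{k})) = \{ \log\phi : \phi \in \exp(\overline{\mathfrak h}^{k})\} = \overline{\mathfrak h}^{k}$, using that $\exp$ is a bijection on $\hat{\mathfrak X}_{N} \cn{n}$. For solvability I would first note that ${\mathfrak h} = {\mathfrak g} \cap {\mathcal L}$ is solvable, being a subalgebra of the solvable Lie algebra ${\mathcal L}$ (Proposition \ref{pro:sla}), say of derived length $\ell$. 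Continuity of the Lie bracket in the Krull topology yields $[\overline{A}^{k}, \overline{B}^{k}] \subset \overline{[A,B]}^{k}$ for subspaces $A,B$, hence by induction $(\overline{\mathfrak h}^{k})^{(j)} \subset \overline{({\mathfrak h}^{(j)})}^{k}$ for all $j$; taking $j = \ell$ gives $(\overline{\mathfrak h}^{k})^{(\ell)} \subset \overline{\{0\}}^{k} = \{0\}$, so $\overline{\mathfrak h}^{k}$ is solvable, and by Lemma \ref{lem:bas} so is $\exp(\overline{\mathfrak h}^{k})$.

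The main obstacle is the jet-level realization in the first two paragraphs: one must verify that each ${\mathfrak h}_{k}$ is genuinely algebraic, with $\exp({\mathfrak h}_{k})$ Zariski-closed and $\mathrm{Lie}(H_{k}) = {\mathfrak h}_{k}$ (this is where Engel's theorem and the polynomiality of $\exp$ on a nilpotent matrix Lie algebra are used), and that surjectivity of the transition maps propagates to the limit so that $\exp(\overline{\mathfrak h}^{k})$ projects exactly onto $H_{k}$. Once the inverse system $(H_{k})$ is correctly set up, pro-algebraicity, the group property and the computation of the Lie algebra are formal consequences of the projective-limit description, and solvability reduces to the short continuity argument above.
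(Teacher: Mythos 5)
Your proposal is correct, but it takes a genuinely different route from the paper on the structural half of the statement. The paper settles pro-algebraicity and the identification of the Lie algebra in two lines: it cites the explicit Lie correspondence of \cite{JR:arxivdl2} in the unipotent case (the Lie algebras of unipotent pro-algebraic groups are exactly the Krull-closed Lie algebras of nilpotent formal vector fields), notes that $\overline{\mathfrak h}^{k}$ is such an algebra, and then identifies the corresponding group with $\mathrm{exp}(\overline{\mathfrak h}^{k})$ via Lemma \ref{lem:bas}. You instead re-derive the direction of that correspondence needed here, jet level by jet level: Engel's theorem plus the polynomiality of $\exp$ and $\log$ on nilpotent matrices make each truncation ${\mathfrak h}_{k}=\pi_{k}(\overline{\mathfrak h}^{k})$ the Lie algebra of a Zariski-closed unipotent group $H_{k}=\exp({\mathfrak h}_{k})$, and the Krull-closedness of $\overline{\mathfrak h}^{k}$ translates exactly into $\exp(\overline{\mathfrak h}^{k})=\{\phi : \phi_{k}\in H_{k}\ \forall k\}$, with surjectivity along the countable tower giving $\{\phi_{k}:\phi\in\exp(\overline{\mathfrak h}^{k})\}=H_{k}$. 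What you buy is self-containedness, since no appeal to \cite{JR:arxivdl2} is needed; what the paper buys is brevity, at the price of outsourcing the correspondence. Two small simplifications are available to you: the group property of $H_{k}$ is most cleanly justified by the Baker--Campbell--Hausdorff formula, which is a finite sum on the nilpotent Lie algebra ${\mathfrak h}_{k}$; and the inclusion $\exp(\overline{\mathfrak h}^{k})\subset\overline{G}^{z}$ follows directly from $\overline{\mathfrak h}^{k}\subset{\mathfrak g}$ together with the bijectivity of $\exp:L(\overline{G}^{z})\to\overline{G}^{z}$ in Lemma \ref{lem:bas}, with no need to pass through $\mathrm{Lie}(G_{k})$ and the unipotence of $G_{k}$. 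Your solvability argument --- semicontinuity of the bracket under Krull closure giving $(\overline{\mathfrak h}^{k})^{(j)}\subset\overline{{\mathfrak h}^{(j)}}^{k}$, the bound $\ell({\mathfrak h})\leq\ell({\mathcal L})\leq 2m$ from Proposition \ref{pro:sla}, then Lemma \ref{lem:bas} to pass from the algebra to the group --- is the same as the paper's.
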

\begin{proof}
The Lie correspondence is explicit in the unipotent case: the Lie algebras of unipotent pro-algebraic groups
are the Lie algebras of formal nilpotent vector fields that are closed in the Krull topology \cite{JR:arxivdl2}.
Since ${\mathfrak g}$ and then $\overline{\mathfrak h}^{k}$ consist of nilpotent elements, we obtain
that $\overline{\mathfrak h}^{k}$ is the Lie algebra of a unipotent pro-algebraic group. Such a group is
equal to $\mathrm{exp}(\overline{\mathfrak h}^{k})$ by Lemma \ref{lem:bas}.

Let us show that $\overline{\mathfrak h}^{k}$ is solvable.
Since $(\overline{\mathfrak l}^{k})^{(1)} \subset \overline{{\mathfrak l}^{(1)}}^{k}$ for any Lie subalgebra of
$\hat{\mathfrak X} \cn{n}$, we obtain
$(\overline{\mathfrak h}^{k})^{(j)} \subset \overline{{\mathfrak h}^{(j)}}^{k}$ for any $j \geq 0$.
In particular we get $\ell (\overline{\mathfrak h}^{k}) \leq \ell ({\mathfrak h}) \leq \ell ({\mathcal L}) \leq 2m$
by Proposition \ref{pro:sla}.
Since $\overline{\mathfrak h}^{k}$ is solvable, the group $\mathrm{exp}(\overline{\mathfrak h}^{k})$
is solvable by Lemma \ref{lem:bas}.
\end{proof}
\begin{lem}
\label{lem:nor2}
$\mathrm{exp}(\overline{\mathfrak h}^{k}) \cap \Gamma(q-p)$ is a normal subgroup of $\Gamma(q-p)$
that contains $\Gamma (m+1)$.
\end{lem}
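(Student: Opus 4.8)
The plan is to prove the two assertions in turn: first the containment $\Gamma(m+1) \subseteq \mathrm{exp}(\overline{\mathfrak h}^{k}) \cap \Gamma(q-p)$, and then the normality, the latter relying on the former. Write $H := \mathrm{exp}(\overline{\mathfrak h}^{k})$, which is a solvable pro-algebraic group by Proposition \ref{pro:solvg}, and $N := H \cap \Gamma(q-p)$. Since $q \leq m+p$ gives $q-p \leq m < m+1$, we have $\mathcal{S}(m+1,m_{0}) \subseteq \mathcal{S}(q-p,m_{0})$, so $\Gamma(m+1) \subseteq \Gamma(q-p)$ and the containment reduces to $\Gamma(m+1) \subseteq H$. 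For normality, since $\Gamma(q-p) = \langle \mathcal{S}(q-p,m_{0}) \rangle$, it suffices to check that every generator $\varphi \in \mathcal{S}(q-p,m_{0})$ satisfies $\varphi N \varphi^{-1} = N$.

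I would first record two normalization facts. For $\varphi \in G$ conjugation preserves $\overline{G}^{z}$ and hence its Lie algebra, giving $\varphi_{*} {\mathfrak g} = {\mathfrak g}$; as every element of the sets $\mathcal{S}(j)$ is an iterated commutator of elements of $\mathcal{S}$, one has $\mathcal{S}(j) \subseteq G$, so this applies throughout. For $\varphi \in \mathcal{S}(q-p,q)$, Lemma \ref{lem:nor1} gives $\varphi_{*} {\mathcal L} = {\mathcal L}$; combined with $\varphi_{*} {\mathfrak g} = {\mathfrak g}$ and the fact that $\varphi_{*}$ is a bijection commuting with intersection, this yields $\varphi_{*} {\mathfrak h} = {\mathfrak h}$, and since $\varphi_{*}$ is continuous in the Krull topology, $\varphi_{*} \overline{\mathfrak h}^{k} = \overline{\mathfrak h}^{k}$. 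Using $\varphi\, \mathrm{exp}(X)\, \varphi^{-1} = \mathrm{exp}(\varphi_{*} X)$, such a $\varphi$ normalizes $H$.

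Next I would prove $\Gamma(m+1) = G(m+1,m_{0}) \subseteq H$ by showing $G(m+1,j) \subseteq H$ for all $j \geq m+1$, by induction on $j$. For the base case $j=q+1$: any generator $\phi \in \mathcal{S}(m+1,q+1)$ lies in $G(m+1,q+1)$, so $\log \phi \in \tilde{\mathcal L}_{1}^{q+1} = L(\overline{G(m+1,q+1)}^{z}) \subseteq {\mathcal L}_{1}^{q+1} \subseteq {\mathcal L}^{q+1} = {\mathcal L}$, while $\log \phi \in {\mathfrak g}$ since $\phi \in G$; hence $\log \phi \in {\mathfrak g} \cap {\mathcal L} = {\mathfrak h} \subseteq \overline{\mathfrak h}^{k}$ and $\phi \in H$. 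As $H$ is a group, $G(m+1,j) \subseteq H$ for $m+1 \leq j \leq q+1$. For the step, assume $G(m+1,j) \subseteq H$ with $j \geq q+1$; a new generator of $G(m+1,j+1)$ has the form $[\phi,\psi]$ or $[\psi,\phi]$ with $\phi \in \mathcal{S}(j)$ and $\psi \in \mathcal{S}(k)$, $j-p \leq k \leq j$. Then $\phi \in G(m+1,j) \subseteq H$. If $k \geq m+1$ then $\psi \in G(m+1,j) \subseteq H$ normalizes $H$; if $k \leq m$ then $q-p \leq j-p \leq k \leq m \leq q$, so $\psi \in \mathcal{S}(q-p,q)$ normalizes $H$ by the previous paragraph. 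In both cases $[\phi,\psi] \in H$, so $G(m+1,j+1) \subseteq H$. This yields $\Gamma(m+1) \subseteq H$, and with $\Gamma(m+1) \subseteq \Gamma(q-p)$ gives the containment. For normality, a generator $\varphi \in \mathcal{S}(j)$ of $\Gamma(q-p)$ with $q-p \leq j \leq q$ lies in $\mathcal{S}(q-p,q)$, hence normalizes $H$, and being in $\Gamma(q-p)$ it normalizes $\Gamma(q-p)$, so it normalizes $N$; a generator with $q+1 \leq j \leq m_{0}$ lies in $\mathcal{S}(m+1,m_{0}) \subseteq \Gamma(m+1) \subseteq N$ and so normalizes the group $N$. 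Thus $N$ is normal in $\Gamma(q-p)$ and contains $\Gamma(m+1)$.

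The delicate point I expect is the inductive step above: the commutators generating $\mathcal{S}(j+1)$ pair a high-index factor $\phi$ with a factor $\psi$ whose index $k$ may fall below $m+1$, so $\psi$ need not lie in $H$; this is exactly where the normalization $\varphi_{*} {\mathcal L} = {\mathcal L}$ of Lemma \ref{lem:nor1} must be invoked, valid precisely on the window $\mathcal{S}(q-p,q)$, and one must verify the bookkeeping $q-p \leq j-p \leq k$ placing every low-index factor in that window. The base case is where the stabilization ${\mathcal L}^{q} = {\mathcal L}^{q+1}$ from Proposition \ref{pro:staext} enters, guaranteeing $\tilde{\mathcal L}_{1}^{q+1} \subseteq {\mathcal L}$ so that $G(m+1,q+1)$ already sits inside $H$.
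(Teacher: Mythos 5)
Your proof is correct and takes essentially the same route as the paper's: the window ${\mathcal S}(q-p,q)$ normalizes $\overline{\mathfrak h}^{k}$ (hence $\mathrm{exp}(\overline{\mathfrak h}^{k})$) via Lemma \ref{lem:nor1} together with $\varphi_{*}{\mathfrak g}={\mathfrak g}$, low-index generators lie in $\mathrm{exp}(\overline{\mathfrak h}^{k})$ because their logarithms belong to ${\mathfrak g}\cap{\mathcal L}={\mathfrak h}$, and an upward induction on the commutator structure of the sets ${\mathcal S}(j)$, using that $\mathrm{exp}(\overline{\mathfrak h}^{k})$ is a group (Proposition \ref{pro:solvg}), yields both the containment and the normality. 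The only cosmetic differences are that you anchor the induction at level $q+1$ via the stabilization ${\mathcal L}^{q+1}={\mathcal L}^{q}$ (the paper anchors at $q$), run it on the groups $G(m+1,j)$ rather than carrying the paper's two-part claim on the sets ${\mathcal S}(j)$, and conclude normality by splitting the generators of $\Gamma(q-p)$ into those normalizing $\mathrm{exp}(\overline{\mathfrak h}^{k})$ and those already inside $N$.
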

\begin{proof}
Let $\varphi \in {\mathcal S}(q-p,q)$.
We have $\varphi_{*} {\mathcal L}={\mathcal L}$ by Lemma \ref{lem:nor1}.
Moreover $\varphi \in G$ implies $\varphi  \overline{G}^{z} \varphi^{-1}= \overline{G}^{z}$ and then
$\varphi_{*} {\mathfrak g}={\mathfrak g}$.
We deduce $\varphi_{*} {\mathfrak h}={\mathfrak h}$ and then
$\varphi_{*} \overline{\mathfrak h}^{k}=\overline{\mathfrak h}^{k}$.

The infinitesimal generator $\log \eta$   belongs to
${\mathcal L}$ and then to $\overline{\mathfrak h}^{k}$ for any $\eta \in {\mathcal S}(q)$.
In particular $\eta$ belongs to $\mathrm{exp}(\overline{\mathfrak h}^{k})$.

Fix $j \geq q$. We claim that that $\eta \in \mathrm{exp}(\overline{\mathfrak h}^{k})$ and
$\varphi_{*} \overline{\mathfrak h}^{k} = \overline{\mathfrak h}^{k}$ for all
$\eta \in {\mathcal S}(j)$ and
$\varphi \in {\mathcal S}(j-p,j)$.
The proof is by induction on $j$. We already proved the result for $j=q$.
Let us show that if it holds for $j$ then so it does for $j+1$.
Fix $\phi \in {\mathcal S}(j)$ and
$\varphi \in {\mathcal S}(j-p,j)$.
Since $\log \phi \in \overline{\mathfrak h}^{k}$ and
$\varphi_{*} \overline{\mathfrak h}^{k}=\overline{\mathfrak h}^{k}$, we deduce
$\varphi \circ \phi \circ \varphi^{-1} \in \mathrm{exp}(\overline{\mathfrak h}^{k})$.
The commutators
\[ [\varphi, \phi] = (\varphi \circ \phi \circ \varphi^{-1}) \circ \phi^{-1} \ \mathrm{and} \
[\phi, \varphi]= \phi \circ (\varphi \circ \phi^{-1} \circ \varphi^{-1}) \]
are compositions of elements of $\mathrm{exp}(\overline{\mathfrak h}^{k})$ and then belong to
$\mathrm{exp}(\overline{\mathfrak h}^{k})$ by Proposition \ref{pro:solvg}.
By varying $\varphi \in {\mathcal S}(j-p,j)$ and $\phi \in {\mathcal S}(j)$
we obtain that ${\mathcal S}(j+1) \subset \mathrm{exp}(\overline{\mathfrak h}^{k}) \cap \Gamma(q-p)$.
Moreover $\eta_{*} \overline{\mathfrak h}^{k}$ is equal to $\overline{\mathfrak h}^{k}$ for any $\eta \in {\mathcal S}(j+1)$
since $\eta \in \mathrm{exp}(\overline{\mathfrak h}^{k})$.

We proved $\varphi_{*} \overline{\mathfrak h}^{k}=\overline{\mathfrak h}^{k}$ for any
$\varphi \in \cup_{j \geq q-p} {\mathcal S}(j)$. Thus
$\varphi$ normalizes $\overline{\mathfrak h}^{k}$ for any $\varphi \in \langle \cup_{j \geq q-p} {\mathcal S}(j) \rangle$.
Since $\Gamma (q-p)=\langle \cup_{j \geq q-p} {\mathcal S}(j) \rangle$, we deduce
that $\varphi$ normalizes $\mathrm{exp}(\overline{\mathfrak h}^{k})$ for any $\varphi \in \Gamma (q-p)$.
Hence
$\mathrm{exp}(\overline{\mathfrak h}^{k}) \cap \Gamma(q-p)$ is a normal subgroup of $\Gamma(q-p)$.

By construction ${\mathcal S}(m+1,q)$ is contained in
$\mathrm{exp}(\overline{\mathfrak h}^{k}) \cap \Gamma(q-p)$.
We proved $\cup_{j \geq q} {\mathcal S}(j) \subset \mathrm{exp}(\overline{\mathfrak h}^{k}) \cap \Gamma(q-p)$.
Since $\mathrm{exp}(\overline{\mathfrak h}^{k}) \cap \Gamma(q-p)$ is a group,
the group $\Gamma (m+1) = \langle \cup_{j > m} {\mathcal S}(j) \rangle$
is contained in $\mathrm{exp}(\overline{\mathfrak h}^{k}) \cap \Gamma(q-p)$.
\end{proof}
The next proposition completes the proof of the inductive step and as a consequence the proof of
Theorem \ref{teo:psis}.
\begin{pro}
$\Gamma (m)$ is solvable.
\end{pro}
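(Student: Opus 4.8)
The plan is to deduce the solvability of $\Gamma(m)$ from the solvable normal subgroup produced in Lemma \ref{lem:nor2}, combined with the purely combinatorial structure of the sets $\mathcal{S}(j)$. Write $N = \mathrm{exp}(\overline{\mathfrak h}^{k}) \cap \Gamma(q-p)$. By Proposition \ref{pro:solvg} the group $\mathrm{exp}(\overline{\mathfrak h}^{k})$ is solvable, so $N$ is solvable; by Lemma \ref{lem:nor2} it is normal in $\Gamma(q-p)$ and contains $\Gamma(m+1)$. The first thing I would record is that the inequality $q \leq m+p$ forces $q-p \leq m$, whence $\Gamma(m) \subseteq \Gamma(q-p)$. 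Consequently $N' := N \cap \Gamma(m)$ is a solvable subgroup of $\Gamma(m)$, and it is normal in $\Gamma(m)$ because conjugation by any element of $\Gamma(m) \subseteq \Gamma(q-p)$ preserves both $N$ (normality in $\Gamma(q-p)$) and $\Gamma(m)$.

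Next I would pass to the quotient $\Gamma(m)/N'$. Since $\Gamma(m) = \langle \mathcal{S}(m),\, \Gamma(m+1)\rangle$ and $\Gamma(m+1) \subseteq N \cap \Gamma(m) = N'$, the quotient $\Gamma(m)/N'$ is generated by the images of the finite set $\mathcal{S}(m)$. The key observation is that for any $f,g \in \mathcal{S}(m)$ the commutator $[f,g]$ belongs to $\mathcal{S}(m+1)$: this is immediate from the definition of $\mathcal{S}_{p}(m+1)$ upon taking the index $k=m$, which is admissible because $m \in \{m-p,\ldots,m\}$. Therefore $[f,g] \in \mathcal{S}(m+1) \subseteq \Gamma(m+1) \subseteq N'$, so the generators of $\Gamma(m)/N'$ commute pairwise and $\Gamma(m)/N'$ is abelian. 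The standard extension argument now finishes the step: $\Gamma(m)/N'$ abelian gives $\Gamma(m)^{(1)} \subseteq N'$, and if $\ell(N')=d$ then $\Gamma(m)^{(1+d)} \subseteq (N')^{(d)} = \{Id\}$, so $\Gamma(m)$ is solvable. Since $\Gamma(m_0)$ is trivial, descending induction on $m$ then yields that $\Gamma(0)=G$ is solvable, which proves Theorem \ref{teo:psis}.

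I expect the genuinely subtle content to lie entirely in the preceding results rather than here: the whole difficulty was in constructing, via the extension Lie algebra $\mathcal{L}$ and the stabilization $\mathcal{L}^{q}=\mathcal{L}^{q+1}$, a subgroup $\mathrm{exp}(\overline{\mathfrak h}^{k})$ that is at once solvable (Proposition \ref{pro:solvg}) and normalized by $\Gamma(q-p)$ while absorbing $\Gamma(m+1)$ (Lemma \ref{lem:nor2}). Given that, the present proposition is a short combinatorial payoff. The only points requiring care are the index bookkeeping guaranteeing $\Gamma(m)\subseteq\Gamma(q-p)$ (i.e.\ $q-p\le m$), which underlies the normality of $N'$ in $\Gamma(m)$, and the verification that the commutators of elements of $\mathcal{S}(m)$ genuinely land in $\Gamma(m+1)$; both follow from the range condition $m-p\le m\le q$ together with Lemma \ref{lem:nor2}.
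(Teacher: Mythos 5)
Your proof is correct and follows essentially the same route as the paper's: both use the normality of $\mathrm{exp}(\overline{\mathfrak h}^{k}) \cap \Gamma(m)$ (your $N'$ coincides with this group since $\Gamma(m) \subseteq \Gamma(q-p)$), the inclusion $\Gamma(m+1) \subseteq \mathrm{exp}(\overline{\mathfrak h}^{k})$ to reduce the generators of the quotient to the classes of ${\mathcal S}(m)$, and the fact that commutators of elements of ${\mathcal S}(m)$ lie in ${\mathcal S}(m+1)$ to conclude the quotient is abelian. Your version merely spells out the index bookkeeping and the final solvable-by-abelian extension step that the paper leaves implicit.
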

\begin{proof}
Since $q-p \leq m$,
the group $\Gamma (m) \cap \mathrm{exp}(\overline{\mathfrak h}^{k})$ is normal in $\Gamma (m)$
by Lemma \ref{lem:nor2}.
We define the group $H= \Gamma(m) / (\Gamma (m) \cap \mathrm{exp}(\overline{\mathfrak h}^{k}))$.
The property $\Gamma (m+1) \subset \mathrm{exp}(\overline{\mathfrak h}^{k})$ (Lemma \ref{lem:nor2})
implies that $H$ is generated by the classes of elements of ${\mathcal S}(m)$.
Since the commutator of elements of ${\mathcal S}(m)$ belongs to ${\mathcal S}(m+1)$, the group
$H$ is abelian. The group $\Gamma(m)^{(1)}$ is contained in $\mathrm{exp}(\overline{\mathfrak h}^{k})$
and hence $\Gamma (m)$ is solvable by Proposition \ref{pro:solvg}.
\end{proof}
\subsection{Consequences}
We show that, given a pseudogroup induced by a
non-solvable group of unipotent local
diffeomorphisms, all points are recurrent outside of a measure zero set.
This result was proved in dimension $2$
by Rebelo and Reis \cite{RR:arxiv}.
\begin{pro}
\label{pro:recuru}
Let $G \subset \mathrm{Diff}_{u}({\mathbb C}^{n},0)$ be a non-solvable group.
Then there exist $r>0$ and a sequence $(f_{j})_{j \geq 1}$ in the
pseudogroup ${\mathcal P}$ generated by $\{ f_{|{\mathbb B}_{r}^{n}} : f \in G \}$
such that $f_{j}$ is defined in ${\mathbb B}_{r/2}^{n}$,
$(f_{j})_{|{\mathbb B}_{r/2}^{n}} \not \equiv Id$ for any $j \geq 1$
and $\lim_{j \to \infty} ||f_{j} - Id||_{r/2}=0$. In particular all the points in
${\mathbb B}_{r/2}^{n}$ except at most a countable union of proper analytic sets are
recurrent for the action of ${\mathcal P}$.
\end{pro}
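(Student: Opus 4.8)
The plan is to deduce Proposition \ref{pro:recuru} by combining Theorem \ref{teo:psis} with Proposition \ref{pro:estup}. Since Theorem \ref{teo:psis} requires finite generation, the first task is to locate a finitely generated non-solvable subgroup $G_{0}$ of $G$; once this is available, the contrapositive of Theorem \ref{teo:psis} will furnish non-$p$-pseudo-solvability and Proposition \ref{pro:estup} will produce the desired sequence. Everything else is a matter of matching the pseudogroups.

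The key preliminary step is a uniform bound on the derived length of solvable unipotent subgroups. Given any solvable subgroup $H$ of $\mathrm{Diff}_{u}({\mathbb C}^{n},0)$, Lemma \ref{lem:bas} gives $\ell(H) = \ell(L(\overline{H}^{z}))$, and $L(\overline{H}^{z})$ is a solvable Lie subalgebra of $\hat{\mathfrak X} \cn{n} \bigotimes_{\mathbb C} \hat{K}_{n}$. Taking $\tilde{\mathcal L}_{1} = L(\overline{H}^{z})$ and passing to its extension Lie algebra ${\mathcal L}$, the inclusion $\tilde{\mathcal L}_{1} \subset {\mathcal L}$ together with Proposition \ref{pro:sla} yields $\ell(L(\overline{H}^{z})) \leq \ell({\mathcal L}) \leq 2m \leq 2n$, where the last inequality uses that the dimensions $\dim {\mathcal L}_{j}$ strictly decrease from a value bounded by $n$. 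Hence $\ell(H) \leq 2n$ for every solvable unipotent subgroup $H$.

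With $d := 2n$ in hand, I would extract $G_{0}$ as follows. Because $G$ is non-solvable, $G^{(d+1)} \neq \{Id\}$, so I choose a nontrivial $g \in G^{(d+1)}$. Writing $g$ as a product of iterated commutators involves only finitely many elements of $G$; letting $G_{0}$ be the subgroup they generate, we get $g \in G_{0}^{(d+1)}$, whence $\ell(G_{0}) > d = 2n$. By the uniform bound of the previous paragraph $G_{0}$ cannot be solvable, and it is finitely generated by construction.

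Finally, I would fix $p = C^{n}$ and a $\delta > 0$ with $(p+2)\delta < 1/4$. Since $G_{0}$ is finitely generated and non-solvable, the contrapositive of Theorem \ref{teo:psis} shows that $G_{0}$ is non-$p$-pseudo-solvable, and Proposition \ref{pro:estup} then provides, for every $r > 0$ small enough, a sequence $(f_{j})_{j \geq 1}$ in the pseudogroup generated by $\{ f_{|{\mathbb B}_{r(1-\delta/4)}^{n}} : f \in G_{0}\}$ on ${\mathbb B}_{r}^{n}$ that is nontrivial on ${\mathbb B}_{r/2}^{n}$ and converges to $Id$ there. As $G_{0} \subset G$, each of these generators is a restriction of an element of $\{ f_{|{\mathbb B}_{r}^{n}} : f \in G\}$, so this pseudogroup is contained in ${\mathcal P}$ and the sequence lies in ${\mathcal P}$; the recurrence statement transfers verbatim from Proposition \ref{pro:estup}. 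The only genuinely non-formal step is the derived-length bound of the second paragraph, which is precisely where the structural classification of Proposition \ref{pro:sla} and the Lie-correspondence of Lemma \ref{lem:bas} are indispensable.
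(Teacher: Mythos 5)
Your proposal is correct, and its overall architecture is exactly the paper's: obtain a uniform bound on the derived length of solvable unipotent subgroups, use it to replace $G$ by a finitely generated non-solvable subgroup, invoke the contrapositive of Theorem \ref{teo:psis} to get non-$p$-pseudo-solvability, and finish with Proposition \ref{pro:estup} (your final pseudogroup-containment check is the same observation the paper leaves implicit). Where you genuinely differ is the justification of the bound. The paper simply quotes Theorem 4 of \cite{JR:arxivdl} --- a unipotent subgroup $H$ of $\mathrm{Diff}({\mathbb C}^{n},0)$ is solvable if and only if $\ell(H) \leq 2n-1$ --- and then writes $G^{(2n-1)} = \cup H^{(2n-1)}$, the union running over finitely generated subgroups $H$ of $G$. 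You instead re-derive the slightly weaker bound $\ell(H) \leq 2n$ from results internal to the paper: $\ell(H) = \ell(L(\overline{H}^{z}))$ by Lemma \ref{lem:bas}, the extension Lie algebra ${\mathcal L}$ contains $L(\overline{H}^{z})$ and satisfies $\ell({\mathcal L}) \leq 2m$ by the proof of Proposition \ref{pro:sla}, and $m \leq n$ because the $\hat{K}_{n}$-dimensions $\dim {\mathcal L}_{j}$ strictly decrease and start at a value at most $n$. This chain is valid, and your extraction of a finitely generated $G_{0}$ with $G_{0}^{(2n+1)} \neq \{Id\}$ is the same standard commutator-unwinding that the paper's union formula encodes; since any finite uniform bound suffices for the finite-generation reduction, nothing is lost in weakening $2n-1$ to $2n$. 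What your version buys is self-containedness (no appeal to the external sharp characterization, only to results already established in the paper), at the cost of being marginally longer; the paper's version is shorter and uses the optimal bound.
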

\begin{proof}
A unipotent subgroup $H$ of $\mathrm{Diff}({\mathbb C}^{n},0)$
is solvable if and only if
$\ell (H) \leq 2n-1$ \cite{JR:arxivdl}[Theorem 4].
We have $G^{(2n-1)} = \cup H^{(2n-1)}$ where the union is considered over the finitely
generated subgroups of $G$. Hence up to replace $G$ with one of its subgroups we can
suppose that $G$ is finitely generated.  Fix $p \in {\mathbb N}$ such that
Theorem \ref{teo:psis} holds. Hence
$G$ is non-$p$-pseudo-solvable by Theorem \ref{teo:psis}.
The remainder of the proof is an immediate consequence of
Proposition \ref{pro:estup}.
\end{proof}
\section{Linear groups}
Let us deal with the cases in Theorem \ref{teo:rec} besides the first one,
that was already settled in Proposition \ref{pro:recuru}.
The other cases are of linear type, indeed we will construct
free subgroups of $j^{1} G$ with free generators arbitrarily
close to $Id$ by using the Tits alternative \cite{Tits}.
In this way we obtain free subgroups of $G$; they are clearly non-$p$-pseudo-solvable
for any $p \in {\mathbb N}$ (cf. Lemma \ref{lem:free})
and hence we can apply Proposition \ref{pro:estl} to obtain recurrent
points.

The linear part $j^{1} G$ of a subgroup $G$ of $\diff{}{n}$ satisfies the Tits alternative,
i.e. either $j^{1} G$ is virtually solvable or it contains a non-abelian
free group. A more precise result by Breuillard and Gelander is the topological
Tits alternative: a subgroup of $\mathrm{GL}(n,{\mathbb C})$ either contains an
open solvable subgroup or a non-abelian dense free subgroup \cite{Breuillard-Gelander:Tits}.
We will use this kind of ideas in  sections \ref{sec:gwnhe}, \ref{sec:gwnscc} and \ref{sec:indg} to
obtain free subgroups of linear groups. At that point we will apply these results to the study of
groups of local diffeomorphisms to show Theorem \ref{teo:rec}.
\begin{defi}
Let $H$ be a subgroup of $\mathrm{GL}(n,{\mathbb C})$.
We denote by $\overline{H}$ the topological closure of $H$.
It is well-known that $\overline{H}$ is a real Lie group (cf.  \cite{Hall-liegroup}[p. 52, Corollary 2.33]).
We denote by $\overline{H}_{0}$ its connected component of the
identity.
\end{defi}
\subsection{Groups without hyperbolic elements}
\label{sec:gwnhe}
In this section we focus on Case (2) of Theorem \ref{teo:rec}.
\begin{defi}
We say that an element $A$ of $\mathrm{GL}(n,{\mathbb C})$ is
hyperbolic if $\mathrm{spec}(A) \not \subset {\mathbb S}^{1}$.
We say that $\phi \in \mathrm{Diff} ({\mathbb C}^{n},0)$ is hyperbolic
if $D_{0} \phi$ is hyperbolic.
\end{defi}
The main result of this section is next theorem
\begin{teo}
\label{teo:nonhyp}
Let $H$ be a subgroup of $\mathrm{GL}(n,{\mathbb C})$.
Suppose that $H$ is non-virtually solvable and does not contain hyperbolic
elements. Then given any neighborhood $V$ of $Id$ in $\mathrm{GL}(n,{\mathbb C})$
there exists $A, B \in H \cap V$ such that
$A$ and $B$ are free generators of the free group $\langle A, B \rangle$.
In particular the group $\overline{H}_{0}$ is non-virtually solvable.
\end{teo}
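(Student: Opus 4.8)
The plan is to reduce the whole statement to one structural fact about the topological closure, namely that $\overline{H}_0$ is \emph{non-solvable}. First I would pass from $H$ to $\overline{H}$. The condition $\mathrm{spec}(A)\subset\mathbb{S}^1$ is closed (it is a closed condition on the coefficients of the characteristic polynomial), so $\overline{H}$ again contains no hyperbolic elements, and $\overline{H}$ is non-virtually solvable because a subgroup of a virtually solvable group is virtually solvable. Since $\overline{H}$ is a Lie group, $\overline{H}_0$ is \emph{open} in $\overline{H}$, so some neighborhood $V_0$ of $Id$ meets $\overline{H}$ only inside $\overline{H}_0$. This already yields the ``in particular'': once we have free generators $A,B\in H\cap V$ with $V\subset V_0$, they lie in $\overline{H}_0$, so $\overline{H}_0$ contains a non-abelian free group and is non-virtually solvable. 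Conversely, granting that $\overline{H}_0$ is non-solvable, I would produce the free generators by applying the Breuillard--Gelander topological Tits alternative to the dense subgroup $H\cap\overline{H}_0$ of the connected non-solvable Lie group $\overline{H}_0$: this subgroup is non-virtually solvable (a dense subgroup of a non-virtually-solvable Lie group is non-virtually solvable, since the closure of a solvable subgroup is solvable), and the local form of their theorem furnishes free generators in any prescribed neighborhood of $Id$. Thus the whole proof hinges on showing that \emph{no hyperbolic elements} plus \emph{non-virtual solvability} forces $\overline{H}_0$ to be non-solvable.

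To attack this I would first record the infinitesimal consequence of the hypothesis: for $X\in\mathrm{Lie}(\overline{H}_0)$ the entire line $\exp(tX)$ lies in $\overline{H}$, so each $\exp(tX)$ has spectrum in $\mathbb{S}^1$, forcing all eigenvalues of $X$ to be purely imaginary. In particular the Levi factor of $\mathrm{Lie}(\overline{H}_0)$ is a \emph{compact} semisimple algebra (its Cartan $\mathfrak{p}$-part would consist of self-adjoint, hence real-spectrum, operators for a suitable invariant Hermitian metric, and a semisimple operator that is both purely imaginary and real must vanish). This shows that if $\overline{H}_0$ is non-solvable it is automatically of compact type, but it does not yet establish non-solvability: the obstacle is that the non-virtual solvability might a priori be concentrated in the component group $\overline{H}/\overline{H}_0$.

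The mechanism I would use to locate the non-solvability inside $\overline{H}_0$ is semisimplification together with a compactness fact. Choosing a composition series of $\mathbb{C}^n$ as an $H$-module and letting $\rho^{ss}$ be the associated graded (block-diagonal) representation, the eigenvalues are unchanged, so $\rho^{ss}(H)$ is again free of hyperbolic elements and is \emph{completely reducible}. I would then invoke the fact that a completely reducible subgroup of $\mathrm{GL}_m(\mathbb{C})$ all of whose elements have eigenvalues of modulus $1$ is relatively compact; hence $K:=\overline{\rho^{ss}(H)}$ is a compact group. The kernel of $\rho^{ss}$ on $H$ consists of unipotent elements, so by Kolchin's theorem it is nilpotent, hence solvable; therefore $K$ inherits the non-virtual solvability of $H$ and, being compact, has non-abelian (hence non-solvable) identity component $K_0$.

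It remains to transfer the non-solvability of $K_0$ back to $\overline{H}_0$, and this is the step I expect to be the main obstacle. Writing $p=\rho^{ss}\colon\overline{H}\to K$, with kernel the closed unipotent group $\widehat N=\overline{H}\cap U$ ($U$ the group of block-unipotent matrices), the difficulty is precisely that $p(\overline{H}_0)$ could a priori be dense only in a proper torus of $K$, the density of $p(\overline{H})$ in $K$ being supplied entirely by the components. I would rule this out by exploiting that $\widehat N$ is normal and that the discrete part of $\widehat N$ is a lattice in a quotient of $U$ on which $\overline{H}$, through its dense image $p(\overline{H})\subset K$, acts while preserving that lattice; since an infinite subgroup of a compact group cannot preserve a lattice (lattice stabilizers in compact groups are finite), the action of $K_0$ on this lattice is trivial, forcing $K_0$ to centralize the relevant unipotent directions and hence to be realized inside $\overline{p(\overline{H}_0)}$. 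This gives $K_0\subset\overline{p(\overline{H}_0)}$, so $p(\overline{H}_0)$, and therefore $\overline{H}_0$, is non-solvable, and the reduction of the first paragraph then completes the proof. The delicate points to be handled with care are the relative-compactness fact for completely reducible groups with unimodular spectrum and, above all, the lattice/centralizer argument that pins the non-solvable compact part inside the identity component rather than in the component group.
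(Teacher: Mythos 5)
Your preliminary reductions are sound, and they in fact run parallel to the paper's own toolkit: the blockwise Burnside/trace argument showing that the semisimplification $\rho^{ss}(H)$ is relatively compact is precisely the paper's lemma on the irreducible quotients $H_{j}$, Kolchin's theorem handles the unipotent kernel, and for a compact Lie group non-virtual solvability does descend to the identity component. The closing step of your plan is also legitimate modulo details, since ``$\overline{H}_{0}$ non-solvable $\Rightarrow$ free generators of $H$ in any prescribed neighborhood of $Id$'' is exactly Theorem \ref{teo:nsolcc} of the paper. But the bridge you yourself flagged as the main obstacle --- transferring the non-solvability of $K_{0}$ back to $\overline{H}_{0}$ --- is a genuine gap, and the lattice mechanism you sketch does not close it. The scenario that must be excluded is that $\overline{H}$ is discrete (or has solvable identity component) while the density of $\rho^{ss}(H)$ in $K$ is carried entirely by the component group $\overline{H}/\overline{H}_{0}$, which may be infinite. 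In that scenario $\widehat{N}=\overline{H}\cap U$ can perfectly well be trivial, so there is no lattice to act on and your argument says nothing; when $\widehat{N}$ is non-trivial, its ``discrete part'' need not be a lattice (spanning, cocompact) in any natural quotient of $U$; when $U$ is non-abelian the conjugation action of $\overline{H}$ on $\widehat{N}$ does not factor through $p(\overline{H})\subset K$ in general; and even granting that $K_{0}$ acts trivially on some lattice of unipotent directions, the inference ``hence $K_{0}\subset\overline{p(\overline{H}_{0})}$'' is a non sequitur --- triviality of that action is perfectly compatible with $p(\overline{H}_{0})=\{Id\}$.

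This missing step is not a technicality: it amounts to the non-discreteness of $H$, which is the actual content of the theorem, and the paper establishes it in the opposite logical direction, constructively. The paper takes free generators $A,B\in H$ from the Tits alternative, uses relative compactness and semisimplicity of the diagonal blocks (Lemmas \ref{lem:semi} and \ref{lem:tri}) to choose powers $A^{n_{k}},B^{n_{k}}$ whose block-diagonal parts tend to $Id$, conjugates by the scaling matrices $C^{s}$ to crush the off-diagonal blocks, and then runs the Zassenhaus commutator process (Remark \ref{rem:lie} together with Lemma \ref{lem:free}; the words there are iterated commutators, hence conjugation-equivariant, so smallness survives undoing the fixed conjugation by $C^{s}$) to land free pairs of elements of $H$ itself arbitrarily close to $Id$. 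The non-virtual solvability of $\overline{H}_{0}$ then comes out as a corollary rather than being an input --- indeed the paper's remark after Theorem \ref{teo:rec} derives Condition (3) from Theorem \ref{teo:nonhyp}, exactly the reverse of your plan. If you wish to keep your architecture, you must supply a real proof that $\overline{H}_{0}$ is non-solvable, and any such proof will have to confront head-on the discrete-$\overline{H}$ scenario that your lattice sketch leaves untouched.
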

Let $H$ be a group satisfying the hypotheses of Theorem \ref{teo:nonhyp}.
It is easier to prove Theorem \ref{teo:nonhyp} if $H$ is irreducible since then
we can use Burnside's theorem. Anyway we associate to $H$ a sequence of
irreducible representations that are useful to show Theorem \ref{teo:nonhyp}.

Consider a sequence
\begin{equation}
\label{equ:seqi}
 \{0\} = V_{0} \subsetneq  V_{1} \subsetneq  \ldots \subsetneq  V_{r} = {\mathbb C}^{n}
\end{equation}
where $V_{j}$ is $H$-invariant for any $0 \leq j \leq r$.
An example is provided by $r=1$, $V_{0}=\{0\}$ and $V_{1}={\mathbb C}^{n}$.
The group $H$ acts on the vector space $V_{j+1}/V_{j}$ for $0 \leq j < r$.
If the action is not irreducible then there exists a $H$-invariant vector space
$V_{j+1/2}$ such that $V_{j} \subsetneq V_{j+1/2} \subsetneq  V_{j+1}$, hence
we can refine the sequence (\ref{equ:seqi}) by introducing the subspace $V_{j+1/2}$.
Since we can not refine indefinitely, there exists a sequence (\ref{equ:seqi})
of $H$-invariant subspaces such that the action of $H$ on $V_{j+1}/V_{j}$ is irreducible
for any $0 \leq j \leq r$. We denote $d_{j}= \dim V_{j}$ and $c_{j}= \dim (V_{j}/V_{j-1})$.
We define $H_{j}$ as the group induced by $H$ on $V_{j}/V_{j-1}$ for any $1 \leq j \leq r$.
The group $H_{j}$ is irreducible by construction and it does not contain hyperbolic elements
by hypothesis. Moreover we have
\begin{lem}
The group $H_{j}$ is  relatively compact
in $\mathrm{GL}(V_{j}/V_{j-1})$ for any $1 \leq j \leq r$.
\end{lem}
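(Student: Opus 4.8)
The plan is to reduce the statement to a standard fact about irreducible linear groups whose elements all have spectrum on the unit circle, and then to prove that fact by a trace-boundedness argument resting on Burnside's theorem. Write $W = V_{j}/V_{j-1}$ and $m = c_{j} = \dim W$, so that $H_{j}$ is an irreducible subgroup of $\mathrm{GL}(W) \cong \mathrm{GL}(m,{\mathbb C})$. First I would record that $H_{j}$ contains no hyperbolic element: with respect to the flag $V_{0} \subset V_{1} \subset \ldots \subset V_{r}$ every $A \in H$ is block upper triangular, and the spectrum of the induced map on $W$ (a diagonal block) is contained in $\mathrm{spec}(A) \subset {\mathbb S}^{1}$. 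Hence every $g \in H_{j}$ satisfies $\mathrm{spec}(g) \subset {\mathbb S}^{1}$; in particular $|\mathrm{tr}(g)| \leq m$ and $|\det g| = 1$ for all $g \in H_{j}$.

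Next I would show that $H_{j}$ is bounded in $M_{m}({\mathbb C})$. Since $H_{j}$ is irreducible and its linear span is a subalgebra of $\mathrm{End}(W)$ (it contains $Id$ and is multiplicatively closed), Burnside's theorem forces that span to be all of $\mathrm{End}(W)$, so I can pick $g_{1}, \ldots, g_{m^{2}} \in H_{j}$ forming a ${\mathbb C}$-basis. Because the trace form $(A,B) \mapsto \mathrm{tr}(AB)$ is non-degenerate, the linear map $\Phi : \mathrm{End}(W) \to {\mathbb C}^{m^{2}}$ given by $\Phi(A) = (\mathrm{tr}(A g_{1}), \ldots, \mathrm{tr}(A g_{m^{2}}))$ is a linear isomorphism. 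For $g \in H_{j}$ each product $g g_{i}$ again lies in the group $H_{j}$, so $|\mathrm{tr}(g g_{i})| \leq m$; thus $\Phi(H_{j})$ is contained in a polydisc of radius $m$ and is bounded. As $\Phi$ is a linear isomorphism, $H_{j}$ itself is bounded.

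Finally, boundedness upgrades to relative compactness in $\mathrm{GL}(W)$: the closure $\overline{H_{j}}$ in $M_{m}({\mathbb C})$ is compact, and since the function $|\det|$ is continuous and equals $1$ on $H_{j}$, it equals $1$ on $\overline{H_{j}}$, so $\overline{H_{j}}$ avoids singular matrices and is therefore a compact subset of $\mathrm{GL}(W)$. I expect the trace-boundedness step to be the crux, since the whole argument hinges on combining irreducibility (to obtain, via Burnside, enough group elements forming a basis) with the uniform eigenvalue bound (to control the traces); the only subtle point afterward is that one needs relative compactness inside $\mathrm{GL}(W)$ rather than merely inside $M_{m}({\mathbb C})$, and this is handled cleanly by the determinant-one observation.
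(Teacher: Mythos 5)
Your proof is correct and takes essentially the same route as the paper's: both invoke Burnside's theorem to extract $c_{j}^{2}$ linearly independent elements of $H_{j}$, use the non-degeneracy of the trace form $(f,h) \mapsto \mathrm{trace}(fh)$ to recover every matrix from the traces $\mathrm{trace}(g g_{k})$, and bound those traces by $c_{j}$ because all spectra lie in ${\mathbb S}^{1}$. You are in fact slightly more careful at the two ends of the argument --- spelling out why $H_{j}$ inherits non-hyperbolicity from $H$ (the spectrum of a diagonal block is contained in the spectrum of the full block-triangular matrix), and using $|\det| = 1$ to ensure the closure avoids singular matrices, so that boundedness yields relative compactness in $\mathrm{GL}(V_{j}/V_{j-1})$ rather than merely in the space of matrices --- both points the paper leaves implicit.
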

\begin{proof}
Since $H_{j}$ is an irreducible subgroup of $\mathrm{GL}(V_{j}/V_{j-1})$,
Burnside's theorem implies that there exist $c_{j}^{2}$ ${\mathbb C}$-linearly independent
elements $g_{1}, \ldots, g_{c_{j}^{2}}$ of $H_{j}$ (cf. \cite{Wehrfritz}[p. 11, Corollary 1.17]).

Consider now the symmetric bilinear form $\alpha : M_{c_{j}}({\mathbb C}) \times M_{c_{j}}({\mathbb C}) \to {\mathbb C}$
defined by $\alpha (f, h) = \mathrm{trace} (f h)$ where $M_{c_{j}}({\mathbb C})$ is the
vector space of $c_{j} \times c_{j}$ complex matrices.
The bilinear form $\alpha$ is non-degenerate;
indeed given $f \in M_{c_{j}}({\mathbb C}) \setminus \{0\}$ there exists
$v \in {\mathbb C}^{c_{j}}$ such that $f(v) \neq 0$.
Consider a base $\{w_{1}, \ldots, w_{c_{j}} \}$ of ${\mathbb C}^{c_{j}}$ such that
$w_{1}=f(v)$ and a linear map $h: {\mathbb C}^{c_{j}} \to {\mathbb C}^{c_{j}}$ with $h(w_{1})=v$ and
$h(w_{k})=0$ for any $2 \leq k \leq c_{j}$.
The map $fh$ satisfies $fh(w_{1}) =w_{1}$ and $fh(w_{k})=0$ for any $2 \leq k \leq c_{j}$.
Hence the trace of $fh$ is equal to $1$.
The non-degenerate nature of $\alpha$ implies the existence of a dual basis
$\{ e_{1}, \ldots, e_{c_{j}^{2}} \}$ of $M_{c_{j}}({\mathbb C})$ such that
$\alpha (g_{k}, e_{l}) = \delta_{kl}$ for any $1 \leq k,l \leq c_{j}^{2}$.
Thus we obtain
\[ g = \sum_{k=1}^{c_{j}^{2}} \alpha(g,g_{k}) e_{k}  = \sum_{k=1}^{c_{j}^{2}} \mathrm{trace} (g g_{k}) e_{k} \]
for any $g \in M_{c_{j}}({\mathbb C})$. We deduce
\[ H_{j} \subset \left\{ \sum_{k=1}^{c_{j}^{2}} t_{k} e_{k}: \ t_{k} \in \tau (H_{j}) \right\} \]
where $\tau (H_{j})$ is the set of traces of elements of $H_{j}$.
The lack of hyperbolic elements implies $\tau (H_{j}) \subset \overline{{\mathbb B}_{c_{j}}^{1}}$.
In particular $H_{j}$ is bounded and then relatively compact
in $\mathrm{GL}(V_{j}/V_{j-1})$ for any $1 \leq j \leq r$.
\end{proof}
\begin{lem}
\label{lem:semi}
The group $\overline{H}_{j}$ consists of semisimple (i.e. diagonalizable) non-hyperbolic
transformations for any $1 \leq j \leq r$.
\end{lem}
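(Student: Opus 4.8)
The plan is to deduce this lemma directly from the relative compactness established in the previous lemma, since a compact matrix group consists automatically of semisimple transformations with unimodular spectrum.

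First I would record that $\overline{H}_{j}$ is a compact subgroup of $\mathrm{GL}(V_{j}/V_{j-1})$: by the previous lemma $H_{j}$ is relatively compact, so its topological closure $\overline{H}_{j}$ is both closed and bounded, hence compact. Once this is in hand the statement becomes the assertion that every element of a compact subgroup $K$ of $\mathrm{GL}(c_{j},{\mathbb C})$ is diagonalizable with spectrum contained in ${\mathbb S}^{1}$.

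The cleanest route to that assertion is averaging. Integrating the standard Hermitian product over the Haar measure of $K$ produces a $K$-invariant Hermitian inner product, so $K$ is conjugate to a subgroup of the unitary group $U(c_{j})$. Every unitary transformation is diagonalizable and has all its eigenvalues on ${\mathbb S}^{1}$, which yields simultaneously the semisimplicity and the non-hyperbolicity. Alternatively, and without invoking Haar measure, one can argue directly: for $g \in \overline{H}_{j}$ the cyclic group $\langle g \rangle$ lies in the compact group $\overline{H}_{j}$, so both $(g^{k})_{k \geq 0}$ and $(g^{-k})_{k \geq 0}$ are bounded; boundedness of $g^{k}$ forces every eigenvalue of $g$ to have modulus at most $1$, and boundedness of $g^{-k}$ forces modulus at least $1$, whence $\mathrm{spec}(g) \subset {\mathbb S}^{1}$. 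Finally a nontrivial Jordan block attached to a unimodular eigenvalue would make the entries of $g^{k}$ grow linearly in $k$, contradicting boundedness, so $g$ is diagonalizable.

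There is no genuine obstacle here: the substantive content — the relative compactness of $H_{j}$, resting on Burnside's theorem and the trace bound coming from the absence of hyperbolic elements — was already carried out in the preceding lemma. The present statement is essentially the observation that compactness upgrades ``$H_{j}$ has no hyperbolic elements'' to ``$\overline{H}_{j}$ has no hyperbolic elements'', together with the standard fact that bounded matrix orbits force diagonalizability.
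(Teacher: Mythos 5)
Your proposal is correct. Both of your routes establish the statement, but your primary route differs from the paper's. The paper proceeds in two separate steps: it first gets $\mathrm{spec}(A) \subset {\mathbb S}^{1}$ for every $A \in \overline{H}_{j}$ purely from the continuity of the spectrum (no compactness needed for this half), and then proves semisimplicity via the multiplicative Jordan decomposition $A = A_{s}A_{u}$: since $A_{s}$ is diagonalizable with unimodular spectrum, $\langle A_{s} \rangle$ is relatively compact, hence so is $\langle A_{u} \rangle$, and a nontrivial unipotent part is ruled out because $A_{u}^{j}v = v + jw$ grows linearly. Your main argument instead unitarizes the compact group $\overline{H}_{j}$ by averaging a Hermitian product over Haar measure, obtaining both semisimplicity and unimodularity of the spectrum in one stroke; this is more structural and shorter, at the cost of invoking Haar measure, whereas the paper's argument is entirely elementary linear algebra. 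Your fallback argument (power-boundedness of $g^{k}$ and $g^{-k}$ forcing unimodular spectrum, plus linear growth of a nontrivial Jordan block forcing diagonalizability) is essentially the paper's argument reorganized, with the Jordan normal form of $g$ used directly in place of the multiplicative Jordan decomposition, and with power-boundedness replacing the continuity-of-spectrum step. One small caveat: your phrase ``closed and bounded, hence compact'' is not the right justification inside $\mathrm{GL}(V_{j}/V_{j-1})$, which is not closed in the matrix space; the compactness of $\overline{H}_{j}$ follows directly from the definition of relative compactness (the closure of a relatively compact set is compact), or from the standard fact that the closure of a bounded subgroup of $\mathrm{GL}$ cannot accumulate on singular matrices because inverses are also bounded. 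This does not affect the validity of your proof.
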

\begin{proof}
Fix $1 \leq j \leq r$.
Since the spectrum of a matrix varies continuously, we obtain $\mathrm{spec}(A) \subset {\mathbb S}^{1}$
for any $A \in \overline{H}_{j}$. Fix $A \in \overline{H}_{j}$.
Consider the multiplicative Jordan decomposition $A = A_{s} A_{u}$ as the product of a semisimple matrix
$A_{s}$ and a unipotent matrix $A_{u}$ that commute. Since $\mathrm{spec}(A)= \mathrm{spec}(A_{s})$ and
$A_{s}$ is diagonalizable, the group $\langle A_{s} \rangle$ is relatively compact.
Hence $\langle A_{u} \rangle$ is relatively compact.
Let us show $A_{u} = Id$ by contradiction. Otherwise
the Jordan normal form theorem implies the existence of
linearly independent vectors $v,w$ such that $A_{u} v = v+w$ and $A_{u} w = w$.
Since $A_{u}^{j} v = v +j w$ for $j \in {\mathbb Z}$ the group $\langle A_{u} \rangle$
is non-relatively compact and we obtain a contradiction.
\end{proof}
Consider a free subgroup $\langle a,b \rangle$ on two generators of $\mathrm{GL}(n,{\mathbb C})$
with $a,b$ close to $Id$. The next result has two functions: namely showing that
$\langle a,b \rangle$ is non-$0$-pseudo-solvable (and then non-$p$-pseudo-solvable for any $p \in {\mathbb N} \cup \{0\}$)
and finding non-abelian free subgroups of $\langle a,b \rangle$ with free generators even closer to $Id$.
\begin{lem}
\label{lem:free}
Let $H$ be a free group on $\{a,b\}$ and
${\mathcal S}  = \{a,b,a^{-1},b^{-1}\}$.
Then ${\mathcal S}_{0} (2k)$ contains $2$ elements that are free generators of
a free group for any $k \geq 0$. Moreover the set $\cup_{j \geq 0} {\mathcal S}_{0} (j)$ is infinite.
\end{lem}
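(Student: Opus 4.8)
The plan is to reduce the requirement ``free generators of a free group'' to the single condition ``do not commute,'' and then to propagate a non-commuting pair through the commutator construction. The reduction rests on a standard fact: by the Nielsen--Schreier theorem the subgroup generated by two elements $g,h$ of a free group is free of rank at most $2$, and it is abelian (hence cyclic, of rank $\le 1$) exactly when $g$ and $h$ commute. Thus if $g$ and $h$ do not commute, $\langle g,h\rangle$ is free of rank $2$; since free groups are Hopfian, any generating pair of a rank-$2$ free group is already a free basis, so $g$ and $h$ are free generators. Consequently it suffices to prove the stronger statement that ${\mathcal S}_{0}(j)$ contains two non-commuting elements for \emph{every} $j\ge 0$ (in particular for $j=2k$), and the restriction to even indices turns out to be inessential.

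I would prove this by induction on $j$, carrying along an explicit non-commuting pair. Recall that for $p=0$ one has ${\mathcal S}_{0}(j+1)=\{[f,g]\text{ or }[g,f]:f,g\in{\mathcal S}_{0}(j)\}$ and that each ${\mathcal S}_{0}(j)$ is closed under inverses. Set $u_{0}=a$, $v_{0}=b$, which are non-commuting in ${\mathcal S}_{0}(0)={\mathcal S}$, and define $u_{j+1}=[u_{j},v_{j}]$ and $v_{j+1}=[u_{j},v_{j}^{-1}]$; both lie in ${\mathcal S}_{0}(j+1)$ because $u_{j},v_{j},v_{j}^{-1}\in{\mathcal S}_{0}(j)$. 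The crux is that $u_{j+1}$ and $v_{j+1}$ do not commute. By the inductive hypothesis and the fact above, $u_{j}$ and $v_{j}$ freely generate a free group, so I may work inside $\langle u_{j},v_{j}\rangle\cong F_{2}$ and write $u=u_{j}$, $v=v_{j}$. Here $x:=uvu^{-1}v^{-1}$ and $y:=uv^{-1}u^{-1}v$ are cyclically reduced words of length $4$ whose cyclic words have no proper period, so neither is a proper power. Since two elements of a free group commute only if they are powers of a common element, $x=w^{m}$ and $y=w^{n}$ would force $|m|=|n|=1$ and hence $x=y^{\pm 1}$; comparing the reduced words $uvu^{-1}v^{-1}$, $uv^{-1}u^{-1}v$ and $v^{-1}uvu^{-1}$ rules this out. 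Thus $x$ and $y$ do not commute, completing the induction. I expect this verification that the commutator construction does not collapse to be the main obstacle, since everything else is formal; the proper-power bookkeeping is the only place a genuine free-group computation enters.

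Finally I would establish infiniteness of $\bigcup_{j\ge 0}{\mathcal S}_{0}(j)$ through the lower central series $(\gamma_{k})_{k\ge 1}$ of the ambient free group, using $[\gamma_{c},\gamma_{c'}]\subseteq\gamma_{c+c'}$. If $u_{j},v_{j}\in\gamma_{c_{j}}$ then $u_{j+1},v_{j+1}\in\gamma_{2c_{j}}$, so with $c_{0}=1$ one gets $u_{j}\in\gamma_{2^{j}}$; hence the nilpotency depth of $u_{j}$ tends to infinity. Because a free group is residually nilpotent, $\bigcap_{k}\gamma_{k}=\{1\}$, so every nontrivial element has finite depth and any finite set of nontrivial elements has bounded depth. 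As each $u_{j}$ is nontrivial (the pair is non-commuting) while the depths are unbounded, the $u_{j}$ cannot all lie in a finite set; therefore $\{u_{j}:j\ge 0\}\subseteq\bigcup_{j}{\mathcal S}_{0}(j)$ is infinite, which proves the last assertion.
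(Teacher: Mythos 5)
Your proof is correct, but it takes a genuinely different route from the paper's. The paper's argument is purely combinatorial: it inductively constructs, for each letter $f \in {\mathcal S}$, an explicit word $\alpha_{f,k} \in {\mathcal S}_{0}(2k)$ whose reduced form has length $4^{2k}$ and begins and ends with $f$; freeness of $\langle \alpha_{a,k}, \alpha_{b,k} \rangle$ then follows from the standard no-cancellation argument for reduced words with these first/last-letter patterns, and infiniteness of $\cup_{j \geq 0} {\mathcal S}_{0}(j)$ is immediate because the union contains reduced words of unbounded length. You instead reduce ``free generators of a rank-$2$ free group'' to the single condition ``non-commuting'' (via Nielsen--Schreier plus Hopficity), propagate a non-commuting pair $u_{j+1}=[u_{j},v_{j}]$, $v_{j+1}=[u_{j},v_{j}^{-1}]$ through the levels --- the key verification that $[u,v]$ and $[u,v^{-1}]$ do not commute being a centralizer/proper-power computation on cyclically reduced words of length $4$ --- and you obtain infiniteness from $u_{j} \in \gamma_{2^{j}}$ together with residual nilpotence of the free group. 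Both arguments are sound; note that your element $v_{j+1}$ does lie in ${\mathcal S}_{0}(j+1)$ precisely because each ${\mathcal S}_{0}(j)$ is closed under inverses, as stipulated in the paper's definition, so no gap arises there. As for what each approach buys: the paper's proof is elementary and self-contained (nothing beyond reduced-word combinatorics) and produces explicit generators with quantitative length control, whereas yours has far less bookkeeping but invokes heavier standard theorems (Nielsen--Schreier, Malcev/Hopficity, the classification of commuting elements in free groups, and Magnus's theorem $\cap_{k} \gamma_{k} = \{1\}$); in exchange, your induction yields a non-commuting --- hence free --- pair in ${\mathcal S}_{0}(j)$ for \emph{every} $j \geq 0$, not only for the even indices $2k$ stated in the lemma.
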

\begin{proof}
Fix $k \geq 0$.
We want to prove that in ${\mathcal S}_{0} (2k)$ there is a word $\alpha_{f,k}$
that in reduced form  has length $4^{2k}$ and $f$ as first and also as last letter for any $f \in {\mathcal S}$.
The result is obvious for $k=0$. Let us show that if it holds for $k$ then so it does for $k+1$. We have that
\[ [\alpha_{a,k}, \alpha_{b^{-1},k}] = a \ldots b \ \mathrm{and} \  [\alpha_{a^{-1},k}, \alpha_{b,k}]  =a^{-1} \ldots b^{-1} \]
are words of length $4^{2k+1}$ in ${\mathcal S}_{0} (2(k+1))$.  We can define
\[ \alpha_{a,k+1} =[[\alpha_{a,k}, \alpha_{b^{-1},k}],[\alpha_{a^{-1},k}, \alpha_{b,k}] ]=
a \ldots b a^{-1} \ldots b^{-1} b^{-1} \ldots a^{-1} b \ldots a . \]
The word $\alpha_{a,k+1}$ belongs to ${\mathcal S}_{0} (2(k+1))$ and has length $4^{2(k+1)}$.
The words $\alpha_{a^{-1},k+1}$,
$\alpha_{b,k+1}$ and $\alpha_{b^{-1},k+1}$ are defined analogously.

It is clear that $\alpha_{a,k}$ and $\alpha_{b,k}$ are free generators of a free group on two elements
for any $k \geq 0$.
Moreover  $\cup_{j \geq 0} {\mathcal S}_{0} (j)$ is infinite since it contains
reduced words of arbitrarily high length.
\end{proof}
The next result is a simple exercise; it is a consequence of the compactness of $({\mathbb S}^{1})^{l}$.
\begin{lem}
\label{lem:tri}
Consider $\lambda_{1} , \ldots, \lambda_{l} \in {\mathbb S}^{1}$. Then there exists an increasing
sequence $(n_{k})_{k \geq 1}$ of natural numbers such that
$\lim_{k \to \infty} \lambda_{j}^{n_{k}} = 1$ for any $1 \leq j \leq l$.
\end{lem}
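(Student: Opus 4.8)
The plan is to exploit the compactness of the torus $({\mathbb S}^{1})^{l}$ together with the fact that $n \mapsto (\lambda_{1}^{n}, \ldots, \lambda_{l}^{n})$ is a group homomorphism from ${\mathbb Z}$ to $({\mathbb S}^{1})^{l}$. First I would set $P_{n} = (\lambda_{1}^{n}, \ldots, \lambda_{l}^{n})$ for $n \in {\mathbb Z}$, regarded as a point of the compact metric abelian group $T = ({\mathbb S}^{1})^{l}$, and record the key identity $P_{a} P_{b}^{-1} = P_{a-b}$ (computed coordinatewise, since $\lambda_{j}^{a} \lambda_{j}^{-b} = \lambda_{j}^{a-b}$). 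The whole idea is that a subsequence of $(P_{n})$ converges by compactness, and the differences of its indices then produce powers tending to $(1, \ldots, 1)$, multiplication and inversion being continuous on the topological group $T$.

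Next I would invoke the sequential compactness of $T$ to extract a strictly increasing sequence of indices $(m_{k})_{k \geq 1}$ such that $P_{m_{k}}$ converges to some $P \in T$ as $k \to \infty$. By passing to a further subsequence I may additionally arrange that $m_{k+1} - m_{k} \to \infty$: this is possible because $m_{k} \to \infty$, so one recursively selects indices whose gaps are as large as desired. Setting $n_{k} = m_{k+1} - m_{k}$, each $n_{k}$ is a positive integer, $n_{k} \to \infty$, and
\[ P_{n_{k}} = P_{m_{k+1}} P_{m_{k}}^{-1} \longrightarrow P P^{-1} = (1, \ldots, 1) \]
as $k \to \infty$, which is precisely the assertion that $\lambda_{j}^{n_{k}} \to 1$ for every $1 \leq j \leq l$.

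Finally, since $n_{k} \to \infty$, I would extract one last subsequence so as to render the sequence strictly increasing while preserving the limit, thereby obtaining the desired increasing sequence of natural numbers. The argument is entirely elementary; the only point requiring a little care is producing an \emph{increasing} sequence of natural numbers rather than merely a sequence of positive integers whose powers converge, and this is exactly what the gap-enlarging extraction in the second step secures.
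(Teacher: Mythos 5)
Your proof is correct and follows precisely the route the paper indicates: the paper offers no written proof, stating only that the lemma is ``a simple exercise; it is a consequence of the compactness of $({\mathbb S}^{1})^{l}$'', and your argument—extracting a convergent subsequence of the points $(\lambda_{1}^{n},\ldots,\lambda_{l}^{n})$ in the compact group $({\mathbb S}^{1})^{l}$ and taking differences of indices via the homomorphism property—is the standard realization of exactly that hint. Your extra step forcing the gaps $m_{k+1}-m_{k}$ to tend to infinity correctly secures the one delicate point, namely that the resulting exponents $n_{k}$ can be taken to form an increasing sequence of natural numbers.
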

\begin{proof}[end of the proof of Theorem \ref{teo:nonhyp}]
Let us remind the reader that a subgroup of $\mathrm{GL}(n,{\mathbb C})$
is either virtually solvable or it contains a non-abelian free group by
the Tits alternative \cite{Tits}. Therefore
there exist $A,B \in H$ that
are free generators of a free group on two elements.
Lemmas \ref{lem:semi} and \ref{lem:tri} imply the existence of a increasing sequence
$(n_{k})_{k \geq 1}$ of natural numbers
such that $A_{|V_{j}/V_{j-1}}^{n_{k}}$ and $B_{|V_{j}/V_{j-1}}^{n_{k}}$ tend to the identity map
when $k \to \infty$ for any $1 \leq j \leq r$.

Given a matrix $D$ we denote by $D_{I,J}$ the minor that corresponds to the rows with index $I$ and
the columns with index $J$.
We define the diagonal minors
$D_{j} = D_{\{d_{j-1}+1, \ldots, d_{j}\}, \{d_{j-1}+1, \ldots, d_{j}\}}$ for any $1 \leq j \leq r$.
Consider a base ${\mathcal B}= \{v_{1}, \ldots, v_{n} \}$ of ${\mathbb C}^{n}$ such that
$\{v_{1}, \ldots, v_{d_{l}} \}$ is a base of $V_{l}$ for any $1 \leq l \leq r$.
We identify $A$ and $B$ with its matrices in the basis ${\mathcal B}$.
They are block upper triangular matrices.
The condition $\lim_{k \to \infty} A_{|V_{j}/V_{j-1}}^{n_{k}} =Id$ implies
$\lim _{k \to \infty} (A^{n_{k}})_{ j} =Id$. We define the matrix
$C^{s}$ for $s \in {\mathbb R}^{+}$ such that
$C_{j,j}^{s} = s^{l}$ if $d_{l} < j \leq d_{l+1}$ and $C_{j,k}^{s}=0$ if $j \neq k$.
We obtain $((C^{s})^{-1} A^{n_{k}} C^{s})_{j} = A_{j}^{n_{k}}$ and
$((C^{s})^{-1} B^{n_{k}} C^{s})_{j} = B_{j}^{n_{k}}$ for all $s>0$, $1 \leq j \leq r$ and $k \geq 1$
whereas all the coefficients of the matrix $(C^{s})^{-1} B^{n_{k}} C^{s}$ outside of the diagonal
minors tend to $0$ when $s \to 0$.
Hence by considering $k \in {\mathbb N}$ big enough and then $s >0$ small enough
we obtain matrices $\tilde{A} = (C^{s})^{-1} A^{n_{k}} C^{s}$ and $\tilde{B} = (C^{s})^{-1} B^{n_{k}} C^{s}$
arbitrarily close to the identity. Moreover $\tilde{A}$ and $\tilde{B}$ are free generators of the group
$\langle \tilde{A},\tilde{B} \rangle$.
Let $W$ be the neighborhood of $Id$ provided by
Remark \ref{rem:lie} for $\mathrm{GL}(n,{\mathbb C})$.
We can suppose $\tilde{A}, \tilde{B}, \tilde{A}^{-1} \tilde{B}^{-1} \in W$.

We define ${\mathcal S} = \{\tilde{A}, \tilde{B}, \tilde{A}^{-1} \tilde{B}^{-1} \}$. The elements in
${\mathcal S}_{0} (k)$ are contained in $\{ D \in \mathrm{GL}(n,{\mathbb C}): ||D-Id|| < \epsilon / 2^{2^{k}-1} \}$
(cf. Remark \ref{rem:lie}).
Lemma \ref{lem:free} implies the existence of free groups on two elements of
$(C^{s})^{-1} H C^{s}$ arbitrarily close to $Id$ .
Therefore there exist free groups on two elements of $H$ arbitrarily close to $Id$.
\end{proof}
\subsection{Groups with non-solvable connected component of $Id$}
\label{sec:gwnscc}
In our quest for free groups we consider now subgroups $H$ of $\mathrm{GL}(n,{\mathbb C})$
such that $\overline{H}_{0}$ is non-solvable.
\begin{defi}
We say that a connected Lie group $T$ is topologically perfect if $T= \overline{[T,T]}$.
\end{defi}
Next we provide the statements of two theorems by Breuillard and Gelander that we
use to find free groups in a neighborhood of $Id$. In particular Theorem \ref{teo:Br-Ge-free}
is particularly interesting since it localizes the
free generators of the free group
\begin{teo}
\cite{Breuillard-Gelander:dense}
\label{teo:Br-Ge-per}
Let $T$ be a topologically perfect real Lie group. There exists a neighborhood $\Omega$ of $Id$,
where $\mathrm{exp}^{-1}$ is a diffeomorphism from $\Omega$ to a neighborhood of
$0$ in $L(T)$, such that given $f_{1}, \ldots, f_{m} \in \Omega$, the group
$\langle f_{1}, \ldots, f_{m} \rangle$ is dense in $T$ if
$\log f_{1}, \ldots, \log f_{m}$ generates $L(T)$.
The neighborhood $\Omega$ can be considered arbitrarily small.
\end{teo}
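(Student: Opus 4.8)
The plan is to pass to the closure and reduce the assertion to a purely Lie-algebraic one, using that topological perfectness of $T$ forces its Lie algebra $L(T)=:\mathfrak{g}$ to be perfect, i.e. $[\mathfrak{g},\mathfrak{g}]=\mathfrak{g}$. Indeed, if $[\mathfrak{g},\mathfrak{g}]\subsetneq\mathfrak{g}$ there would be a nonzero continuous homomorphism $h:T\to\mathbb{R}$ (corresponding to a functional vanishing on $[\mathfrak{g},\mathfrak{g}]$), and then $\overline{[T,T]}\subseteq\ker h\neq T$, contradicting $T=\overline{[T,T]}$. Set $\Gamma=\langle f_{1},\ldots,f_{m}\rangle$ and $H=\overline{\Gamma}$; since $H$ is a closed subgroup of a Lie group it is itself a Lie group (Cartan), with Lie algebra $\mathfrak{h}:=L(H)\subseteq\mathfrak{g}$ and closed identity component $H_{0}$. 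As $T$ is connected, it suffices to prove $\mathfrak{h}=\mathfrak{g}$: then $H_{0}$ and $T$ are connected with the same Lie algebra, so $H_{0}=T$ and $H=T$, i.e. $\Gamma$ is dense.

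First I would show that $\mathfrak{h}$ is an ideal of $\mathfrak{g}$. The component $H_{0}$ is normal in $H$, and each $f_{i}\in H$, so $\mathrm{Ad}(f_{i})\mathfrak{h}=\mathfrak{h}$. Choosing $\Omega$ inside the domain where $\log$ is well defined and the $f_{i}$ close to $Id$, the operator $\mathrm{Ad}(f_{i})=\exp(\mathrm{ad}(\log f_{i}))$ is close to the identity, so its principal logarithm $\mathrm{ad}(\log f_{i})$ preserves every $\mathrm{Ad}(f_{i})$-invariant subspace; in particular $[\log f_{i},\mathfrak{h}]\subseteq\mathfrak{h}$. Thus every $\log f_{i}$ lies in the normalizer $\mathfrak{n}_{\mathfrak{g}}(\mathfrak{h})$, which is a Lie subalgebra. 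Since by hypothesis the $\log f_{i}$ generate $\mathfrak{g}$ as a Lie algebra, we obtain $\mathfrak{n}_{\mathfrak{g}}(\mathfrak{h})=\mathfrak{g}$, i.e. $[\mathfrak{g},\mathfrak{h}]\subseteq\mathfrak{h}$.

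Next, assume for contradiction $\mathfrak{h}\neq\mathfrak{g}$ and pass to the quotient. As $H_{0}$ is a closed connected normal subgroup, $Q:=T/H_{0}$ is a connected Lie group with $L(Q)=\mathfrak{g}/\mathfrak{h}$, which is a nonzero quotient of a perfect Lie algebra and hence itself perfect and nonzero. Writing $\pi:T\to Q$, the elements $\bar{f}_{i}=\pi(f_{i})$ are close to $Id$ and their logarithms $\log\bar{f}_{i}$ generate $L(Q)$; moreover $\pi(\Gamma)$ is dense in $\pi(H)=H/H_{0}$, which is discrete, so $\pi(\Gamma)$ is a discrete subgroup of $Q$. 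Now the Zassenhaus--Kazhdan--Margulis lemma, in the strong form alluded to after Remark \ref{rem:lie}, yields that a discrete subgroup generated by elements sufficiently close to $Id$ is contained in a connected nilpotent Lie subgroup of $Q$; consequently the $\log\bar{f}_{i}$ lie in a nilpotent subalgebra of $L(Q)$, so the Lie algebra they generate, namely all of $L(Q)$, is nilpotent. A nonzero nilpotent Lie algebra is never perfect, a contradiction. Hence $\mathfrak{h}=\mathfrak{g}$ and $\Gamma$ is dense. The final assertion that $\Omega$ may be taken arbitrarily small is then immediate, since shrinking $\Omega$ only restricts the admissible tuples, so any neighborhood contained in an admissible $\Omega$ is again admissible.

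The delicate point --- and where I expect the real work to lie --- is the uniform choice of $\Omega$, which must be fixed in advance and independent of the particular $f_{i}$. The Zassenhaus neighborhood used in the last step lives in the quotient $Q=T/H_{0}$, which itself depends on $\Gamma$ and hence on the $f_{i}$, so one cannot simply invoke a single fixed neighborhood of $Q$. The natural remedy is a version of the Zassenhaus lemma that is uniform across the quotients of $T$ --- for instance deducing a Zassenhaus neighborhood of each $Q$ from the image under $\pi$ of one fixed Zassenhaus neighborhood of $T$ --- or, alternatively, an induction on $\dim T$ that feeds the quotient statement back into the inductive hypothesis. Making this compatibility precise, together with the quantitative commutator control near $Id$ recorded in Remark \ref{rem:lie}, is the technical heart of the argument and the reason this is a genuinely substantial theorem of Breuillard and Gelander.
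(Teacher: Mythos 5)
The paper itself contains no proof of this statement: it is quoted directly from Breuillard and Gelander \cite{Breuillard-Gelander:dense}, so your attempt can only be compared with their argument. Your skeleton is in fact essentially theirs --- pass to $H=\overline{\langle f_{1},\ldots,f_{m}\rangle}$, show by the $\mathrm{Ad}$/principal-logarithm trick that $\mathfrak{h}=L(H)$ is normalized by every $\log f_{i}$ and hence is an ideal of $\mathfrak{g}$, quotient by $H_{0}$, and apply the Zassenhaus--Kazhdan--Margulis lemma to the discrete image --- but your opening reduction is false, and your final contradiction rests on it.

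Concretely, topological perfectness of $T$ does \emph{not} force $\mathfrak{g}=L(T)$ to be perfect. Let $z$ generate the infinite cyclic center of $\widetilde{\mathrm{SL}_{2}(\mathbb{R})}$, let $1,\alpha,\beta$ be rationally independent reals, and set
\[ T=\bigl(\widetilde{\mathrm{SL}_{2}(\mathbb{R})}\times\mathbb{R}^{2}\bigr)/\Gamma,\qquad
\Gamma=\bigl\langle\, (z,(\alpha,\beta)),\ (1,(1,0)),\ (1,(0,1)) \,\bigr\rangle . \]
Here $\Gamma$ is central and discrete, and $[T,T]$ is the image of $\widetilde{\mathrm{SL}_{2}(\mathbb{R})}\times\{0\}$, whose full preimage in the cover, $\widetilde{\mathrm{SL}_{2}(\mathbb{R})}\times(\mathbb{Z}(\alpha,\beta)+\mathbb{Z}^{2})$, is dense; thus $T=\overline{[T,T]}$ is topologically perfect, yet $L(T)=\mathfrak{sl}_{2}(\mathbb{R})\oplus\mathbb{R}^{2}$ is not perfect. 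Your justification breaks down exactly where it must: a functional on $\mathfrak{g}$ killing $[\mathfrak{g},\mathfrak{g}]$ integrates only to a homomorphism of the universal cover, which need not descend to $T$; in the example above every continuous homomorphism $T\to\mathbb{R}$ is zero, since it kills $\overline{[T,T]}=T$. As written, then, you prove the theorem only under the strictly stronger hypothesis that $\mathfrak{g}$ is perfect, which is neither the stated theorem nor what the paper verifies when it invokes it (in the proof of Theorem \ref{teo:nsolcc} only topological perfectness of the group $\overline{H}$ is established). The repair is to delete the reduction and conclude as Breuillard--Gelander do: topological perfectness passes to quotients, and a nontrivial connected Lie group $Q$ with nilpotent Lie algebra is nilpotent and never topologically perfect (in the universal cover $\widetilde{Q}$ the closed subgroup $[\widetilde{Q},\widetilde{Q}]\,Z(\widetilde{Q})$ is proper unless $\widetilde{Q}$ is abelian, and it contains the preimage of $\overline{[Q,Q]}$); hence your $Q=T/H_{0}$ is trivial and $\mathfrak{h}=\mathfrak{g}$. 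With that change the rest of your argument is sound, and the uniformity of the Zassenhaus neighborhood over the varying quotients $T/H_{0}$, which you rightly flag as unresolved, remains the genuine technical point that \cite{Breuillard-Gelander:dense} must and does address.
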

\begin{teo}
\cite{Breuillard-Gelander:Tits}
\label{teo:Br-Ge-free}
Let $k$ be a characteristic $0$ local field.
Let $T= \langle f_{1},\ldots,f_{m} \rangle$ be a subgroup of $\mathrm{GL}(n,k)$
that contains no open solvable subgroup.
Then given any neighborhood $\Omega$ of $Id$
in $T$, there exist $g_{j} \in \Omega f_{j} \Omega$ for any $1 \leq j \leq m$ such that
$g_{1}, \ldots, g_{m}$ are free generators of a free dense subgroup of $T$.
\end{teo}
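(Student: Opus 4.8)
This is a deep result of Breuillard and Gelander \cite{Breuillard-Gelander:Tits}, so rather than reproduce it I will only indicate the strategy I would follow. The plan is to combine Tits' ping-pong method with the creation of proximal elements and a genericity argument for density. First I would pass to the closure $\overline{T}$ of $T$ in $\mathrm{GL}(n,k)$, which is a $k$-analytic Lie group; the hypothesis that $T$ has no open solvable subgroup means exactly that the identity component $\overline{T}_{0}$ is non-solvable, so that the quotient of $\overline{T}_{0}$ by its solvable radical is a nontrivial semisimple group $S$. Composing with the adjoint representation and replacing $k^{n}$ by a suitable exterior or tensor power, I would reduce the freeness question to producing elements with good dynamics on a projective space $\mathbb{P}(k^{N})$ on which a factor of $S$ acts strongly irreducibly.

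The core step is to produce, for each $j$, an element $g_{j} \in \Omega f_{j} \Omega$ that is \emph{very proximal} on $\mathbb{P}(k^{N})$: after raising to a high power and conjugating by small elements, $g_{j}$ acquires a unique dominant eigenvalue, hence an attracting point $x_{j}^{+}$ and a repelling hyperplane $H_{j}^{-}$. This rests on the escape-from-subvarieties lemma of Tits, Breuillard and Gelander: because the relevant group is Zariski dense in a semisimple group, an arbitrarily small perturbation moves any element off a prescribed proper subvariety, in particular into the proximal locus. A second round of small perturbations places the data $\{(x_{j}^{+},H_{j}^{-})\}_{j}$ in general position, so that disjoint ping-pong neighbourhoods exist and the ping-pong lemma yields that $g_{1},\ldots,g_{m}$ freely generate a free group.

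For density I would argue that the two conditions ``freely generate'' and ``generate a dense subgroup of $\overline{T}$'' are each satisfied on a dense set of tuples of $\prod_{j}(\Omega f_{j}\Omega)$, and that their intersection is nonempty. Density in the semisimple quotient $S=\overline{T}_{0}/\mathrm{Rad}(\overline{T}_{0})$ follows from the density criterion for topologically perfect groups (Theorem \ref{teo:Br-Ge-per}), since $S$ is topologically perfect; the general case, where a solvable radical is present, is handled by a finer analysis of $\overline{T}_{0}$ ensuring that the chosen generators also fill out the radical direction. The main obstacle is precisely this simultaneous control: one must realise \emph{all} of the finitely many open conditions (proximality, pairwise transversality of the attracting and repelling data, and Hausdorff-density of the generated subgroup) by elements constrained to the small sets $\Omega f_{j}\Omega$. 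Reconciling the Zariski topology, in which escape and proximality are cheap, with the Hausdorff topology, in which density must be verified, is the delicate interplay that forms the technical heart of the Breuillard--Gelander argument.
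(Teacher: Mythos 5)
This statement is not proved in the paper at all: it is imported verbatim from Breuillard--Gelander \cite{Breuillard-Gelander:Tits} and used as a black box, so there is no internal proof to compare your attempt against, and deferring to the citation, as you do, is exactly the paper's own treatment. Your outline of the Breuillard--Gelander strategy (ping-pong with very proximal elements produced via exterior-power representations, escape-from-subvarieties arguments to enforce proximality and general position of the attracting/repelling data, and a separate density argument on top of freeness) is a fair summary of the structure of their proof.

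One factual slip is worth flagging. You assert that the hypothesis that $T$ contains no open solvable subgroup ``means exactly'' that $\overline{T}_{0}$ is non-solvable. That equivalence is correct over ${\mathbb R}$ and ${\mathbb C}$ --- indeed it is essentially the argument appearing inside the proof of Theorem \ref{teo:nsolcc} of this paper, via Lemma \ref{lem:solc} and the fact that a connected Lie group is generated by any neighborhood of the identity --- but it fails over non-archimedean local fields such as ${\mathbb Q}_{p}$, where $\mathrm{GL}(n,k)$ is totally disconnected, so $\overline{T}_{0}$ is trivial (hence solvable) while the theorem is still non-vacuous. Breuillard and Gelander treat the general case through the Zariski closure and its structure as an algebraic group, not through the Hausdorff identity component. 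Since the present paper only invokes the theorem for $k={\mathbb C}$, this does not affect its use here; but as a plan for proving the theorem as stated, over an arbitrary characteristic $0$ local field, that first reduction step would break down.
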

The algebraic properties of a linear group and its topological closure coincide.
Next lemma provides a particular case of the previous principle.
\begin{lem}
\label{lem:solc}
Given a subgroup $T$ of $\mathrm{GL}(n,{\mathbb C})$ we have
$T^{(j)} \subset \overline{T}^{(j)} \subset \overline{T^{(j)}}$ for any $j \geq 0$.
In particular we deduce  that $\ell (T)$ is equal to $\ell (\overline{T})$ and that $T$ is solvable if and only if
$\overline{T}$ is solvable.
\end{lem}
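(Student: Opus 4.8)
The plan is to establish the two inclusions separately and then read off the assertions about derived length. The left-hand inclusion $T^{(j)} \subset \overline{T}^{(j)}$ is immediate from monotonicity of the derived series with respect to subgroup inclusion: since $T \subset \overline{T}$, a trivial induction on $j$ yields $T^{(j)} \subset \overline{T}^{(j)}$ for every $j \geq 0$. No topology is needed here.

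The right-hand inclusion $\overline{T}^{(j)} \subset \overline{T^{(j)}}$ is the substantive part, and I would reduce it to a single observation: for any subgroup $H$ of $\mathrm{GL}(n,{\mathbb C})$ one has $\overline{H}^{(1)} \subset \overline{H^{(1)}}$. To see this, recall that the closure of a subgroup of a topological group is again a subgroup, so both $\overline{H}$ and $\overline{H^{(1)}}$ are groups. The commutator map $(a,b) \mapsto [a,b]$ is continuous, so given $a, b \in \overline{H}$ and sequences $a_{k}, b_{k} \in H$ with $a_{k} \to a$ and $b_{k} \to b$, we get $[a_{k}, b_{k}] \to [a,b]$ with $[a_{k}, b_{k}] \in H^{(1)}$, whence $[a,b] \in \overline{H^{(1)}}$. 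Since $\overline{H^{(1)}}$ is a group containing every commutator of elements of $\overline{H}$, it contains the subgroup $\overline{H}^{(1)}$ that these commutators generate.

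With this observation available I would argue by induction on $j$. The case $j=0$ is the equality $\overline{T} = \overline{T^{(0)}}$. Assuming $\overline{T}^{(j)} \subset \overline{T^{(j)}}$, I take commutators and then apply the observation with $H = T^{(j)}$:
\[ \overline{T}^{(j+1)} = [\overline{T}^{(j)}, \overline{T}^{(j)}] \subset [\overline{T^{(j)}}, \overline{T^{(j)}}] \subset \overline{[T^{(j)}, T^{(j)}]} = \overline{T^{(j+1)}}, \]
completing the induction and hence the chain $T^{(j)} \subset \overline{T}^{(j)} \subset \overline{T^{(j)}}$.

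The statements on solvability then follow formally. If $T$ is solvable with $T^{(\ell)} = \{Id\}$, then $\overline{T}^{(\ell)} \subset \overline{T^{(\ell)}} = \{Id\}$, so $\overline{T}$ is solvable with $\ell(\overline{T}) \leq \ell(T)$; conversely $T^{(j)} \subset \overline{T}^{(j)}$ forces $\ell(T) \leq \ell(\overline{T})$, so $\ell(T) = \ell(\overline{T})$ and the two groups are solvable simultaneously. The only point demanding any care is the passage from the individual commutators lying in $\overline{H^{(1)}}$ to the generated group $\overline{H}^{(1)}$ lying there; this is precisely where it matters that the closure of a subgroup is a subgroup, and it is the reason the naive application of continuity alone does not suffice. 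Everything else is a routine continuity-and-induction argument.
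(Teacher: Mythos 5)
Your proof is correct and takes essentially the same approach as the paper: the paper's (much terser) proof simply asserts the key inclusion $\overline{T}^{(1)} \subset \overline{T^{(1)}}$ and concludes ``by recurrence,'' which is precisely the observation and induction you spell out. The continuity of the commutator map and the fact that $\overline{H^{(1)}}$ is a subgroup (so that it contains the group generated by the limit commutators) are exactly the details the paper leaves implicit.
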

\begin{proof}
We have $\overline{T}^{(1)} \subset \overline{T^{(1)}}$. By recurrence
we obtain $T^{(j)} \subset \overline{T}^{(j)} \subset \overline{T^{(j)}}$ for any $j \geq 0$.
In particular we obtain $\ell (T) = \ell (\overline{T})$.
\end{proof}
Next we introduce and prove the main result of this section.
\begin{teo}
\label{teo:nsolcc}
Let $H$ be a subgroup of $\mathrm{GL}(n,{\mathbb C})$ such that $\overline{H}_{0}$ is non-solvable.
Then given any neighborhood $U$ of $Id$ in $H$, there exists free generators $f,g \in  H \cap U$ of a free
group on two elements.
\end{teo}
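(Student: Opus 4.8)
The plan is to reduce to a topologically perfect subgroup sitting inside $\overline{H}_0$, transfer the density of $H$ down to that subgroup, and then feed the resulting near-identity generators into the two theorems of Breuillard and Gelander.

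First I would replace $H$ by $H' := H \cap \overline{H}_0$. Since $\overline{H}_0$ is open in $\overline{H}$ and $H$ is dense in $\overline{H}$, the group $H'$ is dense in $G := \overline{H}_0$ and $U \cap H'$ is a neighborhood of $Id$ in $H'$; as $H' \subseteq H$, it suffices to produce the free generators inside $H' \cap U$. Thus I may assume $\overline{H} = G$ is connected and non-solvable. Next I would form the descending series $D_0 = G$, $D_{k+1} = \overline{[D_k,D_k]}$ of closed connected subgroups. Their dimensions are non-increasing, so the series stabilizes at some $T := D_N = \overline{[T,T]}$, which is therefore topologically perfect; being a nontrivial closed connected subgroup (nontrivial because $G^{(k)} \subseteq D_k$ and $G$ is non-solvable), it is itself non-solvable by Lemma \ref{lem:solc}.

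The main obstacle is that $T$ lives in $\overline{H}_0$ and a priori need not meet $H$ at all, whereas Theorem \ref{teo:Br-Ge-per} must be fed generators lying in $T$. I would resolve this by proving that $H' \cap T$ is dense in $T$. The key observation is that if $A$ is dense in a topological group $L$, then $\overline{[A,A]} = \overline{[L,L]}$, since every commutator $[x,y]$ with $x,y \in L$ is a limit of commutators of elements of $A$. Applying this inductively with $A = H'_{k} := H' \cap D_k$ and $L = D_k$, and using $[H'_k,H'_k] \subseteq H' \cap D_{k+1} = H'_{k+1}$, I obtain that each $H'_k$ is dense in $D_k$; at the stabilization index this yields that $H' \cap T = H'_N$ is dense in $T$.

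With density in hand, I would apply Theorem \ref{teo:Br-Ge-per} to the topologically perfect Lie group $T$ to obtain a neighborhood $\Omega \subseteq T$ of $Id$, which I may take inside $U$, on which $\mathrm{exp}^{-1}$ is a diffeomorphism onto a neighborhood of $0$ in $L(T)$ and such that any elements of $\Omega$ whose logarithms generate $L(T)$ generate a dense subgroup of $T$. Because $H' \cap T$ is dense in $T$, its logarithms are dense near $0$ and hence contain a basis of $L(T)$; so I can choose $f_1,\dots,f_m \in (H' \cap T) \cap \Omega \subseteq H \cap U$ with $\log f_1,\dots,\log f_m$ a basis of $L(T)$. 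Then $T' := \langle f_1,\dots,f_m\rangle$ is dense in $T$, so $\overline{T'}_0 = T$ is non-solvable and $T'$ contains no open solvable subgroup. Finally I would apply Theorem \ref{teo:Br-Ge-free} to the linear group $T' \subseteq \mathrm{GL}(n,\mathbb{C})$, choosing the neighborhood $\Omega'$ of $Id$ in $T'$ small enough that $\Omega' f_j \Omega' \subseteq U$ for every $j$; this produces free generators $g_j \in \Omega' f_j \Omega' \subseteq H \cap U$ of a free group. Since $T'$ is non-solvable we must have $m \geq 2$, so $f := g_1$ and $g := g_2$ are free generators of a free group on two elements lying in $H \cap U$, as required.
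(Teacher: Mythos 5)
Your proposal is correct and follows essentially the same route as the paper's proof: reduce to the dense subgroup $H \cap \overline{H}_{0}$, descend through the closed commutator series to a non-trivial topologically perfect connected group $T$, transfer density of the subgroup of $H$ into $T$ via the commutator-density observation, check that the resulting finitely generated dense subgroup has no open solvable subgroup, and then combine Theorem \ref{teo:Br-Ge-per} with Theorem \ref{teo:Br-Ge-free}. The only cosmetic difference is that the paper tracks the derived groups $(\overline{H}_{0} \cap H)^{(j)}$, proving their closures coincide with the terms $H_{j}^{\circ}$ of the closed series, whereas you track the intersections $H \cap D_{k}$; both rest on exactly the same fact, namely that commutators of a dense subgroup are dense in the closed commutator subgroup.
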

\begin{proof}
We denote $H_{0}^{\circ} =  \overline{H}_{0}$ and
$H_{j+1}^{\circ} = \overline{[H_{j}^{\circ}, H_{j}^{\circ}]}$ for $j \geq 0$.
The construction of $H_{j}^{\circ}$ implies
$\overline{(\overline{H}_{0} \cap H)^{(j)}} \subset H_{j}^{\circ}$ for any $j \geq 0$.
Let us show that $H_{j}^{\circ}$ is contained in $\overline{(\overline{H}_{0} \cap H)^{(j)}}$ for
$j \geq 0$. It is obvious for $j=0$. Suppose it holds for $j \geq 0$.
We have
\[ H_{j+1}^{\circ} \subset  \overline{[\overline{(\overline{H}_{0} \cap H)^{(j)}},\overline{(\overline{H}_{0} \cap H)^{(j)}}]}  \subset
\overline{\overline{(\overline{H}_{0} \cap H)^{(j+1)}}} = \overline{(\overline{H}_{0} \cap H)^{(j+1)}} \]
by Lemma \ref{lem:solc}. We obtain $H_{j}^{\circ} \subset \overline{(\overline{H}_{0} \cap H)^{(j)}}$ for any $j \geq 0$
by induction on $j$. Thus
$H_{j}^{\circ} = \overline{(\overline{H}_{0} \cap H)^{(j)}}$ holds for any $j \geq 0$.

Since $\overline{H}_{0} \cap H$ is dense in $\overline{H}_{0}$,
the group $\overline{H}_{0} \cap H$ is non-solvable by Lemma \ref{lem:solc}.
A simple consequence is that $H_{j}^{\circ}$ is never
the trivial group for any $j \geq 0$.

The group $H_{0}^{\circ}$ is connected. Moreover if $H_{j}^{\circ}$ is connected then
$[H_{j}^{\circ}, H_{j}^{\circ}]$ and $H_{j+1}^{\circ}$ are connected too for $j \geq 0$.
We deduce that $H_{j}^{\circ}$ is connected for any $j \geq 0$.
We obtain a decreasing sequence $H_{0}^{\circ} \supset H_{1}^{\circ} \supset H_{2}^{\circ} \supset \ldots$ of
connected Lie groups. Given $j \geq 0$ either we have
$H_{j}^{\circ} = H_{j+1}^{\circ}$ or $\dim H_{j}^{\circ} < \dim H_{j+1}^{\circ}$.
Hence there exists $j_{0} \geq 0$ such that $\{Id \} \neq H_{j_{0}}^{\circ} = H_{j_{0}+1}^{\circ}$.
By replacing $H$ with ${(\overline{H}_{0} \cap H)}^{(j_{0})}$ we can suppose that $H$ is a non-solvable group
such that $\overline{H}$ is topologically perfect.

Let $\Omega$ be a neighborhood of $Id$ for $\overline{H}$ as provided by
Theorem \ref{teo:Br-Ge-per}. We can suppose $\Omega \subset U$.
It is clear that their exist $f_{1}, \hdots, f_{m} \in \Omega \cap H$ such that
$\overline{H}= \overline{J}$ where $J = \langle f_{1}, \hdots, f_{m} \rangle$. Moreover
this can be achieved for $m= \dim \overline{H}$.

Let us show that $J$ does not contain any open solvable subgroup.
Otherwise there exists a neighborhood $V$ of $Id$ in $\mathrm{GL}(n,{\mathbb C})$
such that $\langle J \cap V \rangle$ is solvable.
The group $\overline{\langle J \cap V \rangle}$ is solvable by Lemma \ref{lem:solc}.
Since such a group contains $\langle \overline{J \cap V} \rangle$, we deduce that
$\langle \overline{H} \cap V \rangle$ is solvable. A connected Lie group is generated
by any of its neighborhoods of the identity, hence $\overline{H}$ is solvable and we obtain
a contradiction.

Theorem \ref{teo:Br-Ge-free} implies that the
there exist $g_{1}, \ldots, g_{m} \in J$ arbitrarily
close to $f_{1}, \ldots, f_{m}$ respectively such that
$g_{1}, \ldots, g_{m}$ are free generators of a free dense subgroup of $J$.
We can suppose $g_{1}, \ldots, g_{m} \in U$.
It is clear that $m \geq 2$ since $\langle g_{1}, \hdots, g_{m} \rangle$ is non-solvable
by Lemma \ref{lem:solc}.
\end{proof}
%
%
%
%
%
\subsection{Irreducible and non-discrete groups}
\label{sec:indg}
We consider conditions on a linear group $H$ forcing $\overline{H}_{0}$
to be non-solvable. Indeed we require $H$ and all its finite index subgroups
to be irreducible and the group induced by $H$ in $\mathrm{PGL}(n,{\mathbb C})$
to be discrete (cf. Theorem \ref{teo:nvrnd}).
The next result corresponds to one of the cases in Theorem \ref{teo:rec}
and it is also an intermediate result to show Theorem \ref{teo:nvrnd}.
\begin{teo}
\label{teo:ccind}
Let $H$ be a subgroup of $\mathrm{GL}(n,{\mathbb C})$ that is not virtually reducible.
Suppose further that $\overline{H}_{0}$ is not contained in ${\mathbb C}^{*} Id$.
Then given any neighborhood $U$ of $Id$ in $H$, there exist $f,g  \in H \cap U$ such that
$f,g$ are free generators of the free group $\langle f,g \rangle$.
\end{teo}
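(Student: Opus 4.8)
The plan is to reduce the whole statement to Theorem \ref{teo:nsolcc}. That theorem already produces, inside any prescribed neighborhood $U$ of $Id$ in $H$, two elements $f,g \in H \cap U$ that freely generate a free group, provided one knows that $\overline{H}_0$ is non-solvable. So the entire content of the argument lies in the claim that the two hypotheses force non-solvability: \emph{if $H$ is not virtually reducible and $\overline{H}_0 \not\subset {\mathbb C}^{*} Id$, then $\overline{H}_0$ is non-solvable.} I would prove this claim by contradiction, assuming $\overline{H}_0$ solvable, and then invoke Theorem \ref{teo:nsolcc}.

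To set up the contradiction I would pass to the Zariski closure $A := \overline{\overline{H}_0}^{z}$ in $\mathrm{GL}(n,{\mathbb C})$. Since $\overline{H}_0$ is connected in the Euclidean topology, $A$ is Zariski-connected: an algebraic group has finitely many components, so its identity component pulls back to a finite-index Euclidean-closed subgroup of $\overline{H}_0$, which must be all of $\overline{H}_0$ by connectedness, and hence $A$ agrees with its own identity component. Solvability is preserved under Zariski closure, so $A$ is a connected solvable algebraic group; by the structure theorem it splits as $A = T \ltimes N$, where $N := R_u(A)$ is the unipotent radical, a characteristic closed connected normal subgroup, and $A/N$ is a torus. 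The point used throughout is that $\overline{H}_0$ is normal in the Lie group $\overline{H} \supset H$; hence every $h \in H$ normalizes $\overline{H}_0$, and therefore also its Zariski closure $A$ and the characteristic subgroup $N$, so $h N h^{-1} = N$.

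I would then split according to $N$. If $N \neq \{Id\}$, the fixed space $\mathrm{Fix}(N)$ is nonzero (unipotent groups have common fixed vectors), proper (because $N$ acts nontrivially), and $A$-invariant (because $N \trianglelefteq A$); since $hNh^{-1}=N$ for $h \in H$, it is in fact $H$-invariant, so $H$ is reducible, contradicting the hypothesis. If $N = \{Id\}$, then $\overline{H}_0$ is diagonalizable and ${\mathbb C}^{n} = \bigoplus_i V_{\chi_i}$ decomposes into finitely many weight spaces. A single weight would mean $\overline{H}_0$ acts by scalars, i.e. $\overline{H}_0 \subset {\mathbb C}^{*} Id$, again contrary to hypothesis; otherwise there are at least two weight spaces, and the identity $g(hv) = \chi(h^{-1}gh)\,(hv)$ for $v \in V_{\chi}$, $g \in \overline{H}_0$, $h \in H$ shows that $H$ permutes the finite set of weight spaces, so the kernel of this finite permutation action is a reducible finite-index subgroup and $H$ is virtually reducible, a contradiction. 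This establishes the claim, and Theorem \ref{teo:nsolcc} then yields $f,g$.

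The main obstacle is precisely this claim, and within it the bookkeeping that turns ``not virtually reducible'' into a usable constraint: one must check that every canonically attached object ($A$, its unipotent radical $N$, and the weight-space decomposition) is genuinely preserved by $H$, which is where the normality of $\overline{H}_0$ in $\overline{H}$ is essential. Once the dichotomy on $N$ is in place, each of the two resulting contradictions is short; the delicate input is the passage to $A$ and the verification that the structural pieces produced there are $H$-invariant rather than merely $\overline{H}_0$-invariant.
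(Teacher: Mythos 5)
Your proof is correct, and its skeleton coincides with the paper's: both reduce to Theorem \ref{teo:nsolcc} by showing that the two hypotheses force $\overline{H}_{0}$ to be non-solvable, and both derive the contradiction from solvability by producing finitely many canonical subspaces that $H$ permutes (via normality of $\overline{H}_{0}$ in $\overline{H}$), so that a finite-index subgroup of $H$ is reducible. The difference lies in how those subspaces are produced. The paper stays at the Lie-group level: it applies the Lie-Kolchin theorem directly to the connected solvable group $\overline{H}_{0}$, takes $S$ to be the set of \emph{all} common eigenvectors of $\overline{H}_{0}$, and notes that $S$ is a direct sum of finitely many subspaces $V_{1}, \ldots, V_{m}$ on which $\overline{H}_{0}$ acts by scalars; the case $m=1$, $V_{1}={\mathbb C}^{n}$ is excluded by $\overline{H}_{0} \not\subset {\mathbb C}^{*} Id$, and otherwise each $V_{j}$ is proper, giving virtual reducibility --- a single unified argument with no case split. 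You instead pass to the Zariski closure $A$, invoke the structure theorem $A = T \ltimes R_{u}(A)$ for connected solvable algebraic groups, and argue by cases: a nontrivial unipotent radical yields the $H$-invariant proper nonzero subspace $\mathrm{Fix}(N)$ (by Kolchin's theorem), while a trivial one makes $A$ a torus, where your weight-space permutation argument is essentially the paper's. Your route needs more machinery (solvability and connectedness pass to the Zariski closure, plus the algebraic structure theorem), but it buys a slightly stronger conclusion in the unipotent case ($H$ itself is reducible, not merely virtually so) and cleanly isolates where each hypothesis enters; the paper's route is shorter, needing only Lie-Kolchin, at the price of handling both phenomena inside one decomposition.
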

\begin{proof}
It suffices to show that the hypotheses imply that $\overline{H}_{0}$ is non-solvable and then
to apply Theorem \ref{teo:nsolcc}.

Suppose $\overline{H}_{0}$ is solvable. We define
\[ S = \{ v \in {\mathbb C}^{n} : \exists \lambda_{A,v} \in {\mathbb C} \ \mathrm{such} \ \mathrm{that} \
A v = \lambda_{A,v} v \ \ \forall A \in \overline{H}_{0} \} \]
of vectors that are eigenvectors for all transformations in $\overline{H}_{0}$.
The set $S$ is a finite union $V_{1}, \ldots, V_{m}$ such that $V_{1} + \ldots + V_{m}$
is a direct sum. Moreover $A_{|V_{j}}$ is a multiple of the identity map for
all $A \in \overline{H}_{0}$ and $1 \leq j \leq m$.
Let us remark that $S \neq \{0\}$ since all elements of $\overline{H}_{0}$ have a common
eigenvector by Lie-Kolchin theorem (cf. \cite{Serre.Lie}[p. 38, Theorem 5.1*]).
The group $\overline{H}_{0}$ is normal in $\overline{H}$, hence
the elements of $\overline{H}$ permute the subspaces $V_{1}, \ldots, V_{m}$.
There exists a finite index normal subgroup $J$ of $H$ such that
$A (V_{j})=V_{j}$ for any $1 \leq j \leq m$.
Notice that the situation $m=1$ and $V_{1}= {\mathbb C}^{n}$ is impossible since then
$\overline{H}_{0} \subset {\mathbb C}^{*} Id$.
We deduce that $J$ is reducible and then $H$ is virtually reducible, obtaining
a contradiction.
\end{proof}
\begin{teo}
\label{teo:nvrnd}
Let $H$ be a subgroup of $\mathrm{GL}(n,{\mathbb C})$ that is not virtually reducible.
Suppose that the group induced by ${H}$ in $\mathrm{PGL}(n,{\mathbb C})$ is non-discrete.
Then given any neighborhood $U$ of $Id$ in $\mathrm{GL}(n,{\mathbb C})$, there exist
$A,B  \in H \cap U$ such that $A,B$ are free generators of the free group $\langle A,B \rangle$.
In particular $\overline{H}_{0}$ is non-virtually solvable.
\end{teo}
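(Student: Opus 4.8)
The plan is to reduce Theorem \ref{teo:nvrnd} to Theorem \ref{teo:ccind}. Since that theorem already produces free generators $A,B \in H \cap U$ arbitrarily close to $Id$ whenever $H$ is not virtually reducible and $\overline{H}_{0} \not\subseteq {\mathbb C}^{*} Id$, it suffices to prove, under the present hypotheses, that $\overline{H}_{0} \not\subseteq {\mathbb C}^{*} Id$. First I would record two consequences of $H$ being not virtually reducible: the group $H$ is irreducible (a reducible group is virtually reducible, of index one), so by Burnside's theorem \cite{Wehrfritz} there exist $g_{1}, \ldots, g_{n^{2}} \in H$ forming a ${\mathbb C}$-basis of the matrix algebra $M_{n}({\mathbb C})$; and $n \geq 2$, since otherwise $\mathrm{PGL}(n,{\mathbb C})$ is trivial and its non-discreteness fails.

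Next I would exploit the non-discreteness of the image of $H$ in $\mathrm{PGL}(n,{\mathbb C})$. Let $\pi: \mathrm{GL}(n,{\mathbb C}) \to \mathrm{PGL}(n,{\mathbb C})$ be the projection. Non-discreteness yields a sequence $(h_{k})_{k \geq 1}$ in $H$ with $h_{k} \notin {\mathbb C}^{*} Id$ and $\pi(h_{k}) \to Id$. The key observation is that the commutator with a fixed element descends to $\mathrm{PGL}(n,{\mathbb C})$: the map $\Phi(\pi(\phi), g) = \phi g \phi^{-1} g^{-1}$ is well defined (the scalars cancel) and continuous on $\mathrm{PGL}(n,{\mathbb C}) \times \mathrm{GL}(n,{\mathbb C})$, with $\Phi(Id, g) = Id$. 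Hence for every fixed $g \in H$ one gets $[h_{k}, g] \to Id$ in $\mathrm{GL}(n,{\mathbb C})$, while $[h_{k}, g] \in H$ for all $k$.

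The main difficulty is a degenerate case in which all of these commutators eventually vanish, and this is exactly where irreducibility must be used. I would organize the argument as a dichotomy applied to the finite Burnside basis $g_{1}, \ldots, g_{n^{2}}$. If for some $i$ the commutators $[h_{k}, g_{i}]$ fail to be scalar for infinitely many $k$, then $([h_{k}, g_{i}])$ is a sequence of non-scalar elements of $H$ converging to $Id$; for $k$ large these lie in $\overline{H} \cap W \subseteq \overline{H}_{0}$ for a suitable neighbourhood $W$ of $Id$, and being non-scalar they witness $\overline{H}_{0} \not\subseteq {\mathbb C}^{*} Id$. Otherwise, for every $i$ and every large $k$ the commutator $[h_{k}, g_{i}]$ is scalar; having determinant $1$ it is then a root-of-unity multiple of $Id$ converging to $Id$, hence equal to $Id$ for $k$ large. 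Thus a single non-scalar $h_{k}$ (with $k$ large) commutes with every $g_{i}$, hence with their linear span $M_{n}({\mathbb C})$, forcing $h_{k}$ to be scalar, a contradiction. Therefore $\overline{H}_{0} \not\subseteq {\mathbb C}^{*} Id$ in all cases.

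Finally, with $H$ not virtually reducible and $\overline{H}_{0} \not\subseteq {\mathbb C}^{*} Id$ established, Theorem \ref{teo:ccind} supplies free generators $A, B \in H \cap U$; note that any neighbourhood of $Id$ in $\mathrm{GL}(n,{\mathbb C})$ restricts to a neighbourhood of $Id$ in $H$, so the two formulations agree. For the last assertion, the proof of Theorem \ref{teo:ccind} establishes that $\overline{H}_{0}$ is non-solvable (this is precisely what permits the appeal to Theorem \ref{teo:nsolcc}); since $\overline{H}_{0}$ is a connected Lie group it is non-solvable if and only if it is non-virtually solvable, which yields the ``in particular'' statement. I expect the dichotomy step to be the only delicate point, the remainder being a routine assembly of Theorem \ref{teo:ccind}, Burnside's theorem, and the continuity of the commutator map on $\mathrm{PGL}(n,{\mathbb C})$.
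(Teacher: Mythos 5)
Your proposal is correct, but it reaches the conclusion by a genuinely different route than the paper. The paper never verifies $\overline{H}_{0} \not\subset {\mathbb C}^{*} Id$ for $H$ itself: it passes to the saturation $J = \Lambda^{-1}(\Lambda(H))$, for which that hypothesis is immediate (non-discreteness of $\Lambda(H)$ lifts to elements of $J$ arbitrarily close to $Id$ and outside ${\mathbb C}^{*} Id$, because $J$ contains every scalar multiple of every element of $H$), applies Theorem \ref{teo:ccind} to $J$, and then must push the resulting free generators $C,D \in J \cap U$ back into $H$; this descent step uses the identity $J^{(1)} = H^{(1)}$ (scalars are central) together with Lemma \ref{lem:free} and the Zassenhaus estimate of Remark \ref{rem:lie}, which produce free pairs among iterated commutators in ${\mathcal S}_{0}(2k) \subset J^{(1)} = H^{(1)} \subset H$ that still lie in $U$. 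You instead establish $\overline{H}_{0} \not\subset {\mathbb C}^{*} Id$ directly for $H$ --- via continuity of the commutator map on $\mathrm{PGL}(n,{\mathbb C}) \times \mathrm{GL}(n,{\mathbb C})$ (the same ``scalars cancel in commutators'' fact that underlies the paper's $J^{(1)}=H^{(1)}$), Burnside's theorem, and your dichotomy, whose degenerate branch is correctly excluded: scalar commutators have determinant $1$, hence are $n$-th roots of unity times $Id$ and must equal $Id$ once close to $Id$, so a non-scalar $h_{k}$ would commute with a basis of $M_{n}({\mathbb C})$, which is absurd --- and you can then quote Theorem \ref{teo:ccind} for $H$ itself, with no descent needed. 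The trade-off: the paper avoids Burnside at the cost of re-invoking the commutator machinery (Lemma \ref{lem:free} and Remark \ref{rem:lie}); your version needs Burnside (already used elsewhere in the paper) but obtains the free pair in $H \cap U$ in a single application of Theorem \ref{teo:ccind}. Your handling of the final assertion (extracting non-solvability of $\overline{H}_{0}$ from the proof of Theorem \ref{teo:ccind}, plus the fact that a connected Lie group is solvable if and only if it is virtually solvable) is also valid and makes explicit what the paper leaves implicit.
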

\begin{proof}
Consider the natural map $\Lambda : \mathrm{GL}(n,{\mathbb C}) \to \mathrm{PGL}(n,{\mathbb C})$.
We denote $J= \Lambda^{-1} (\Lambda (H))$.
The group $J$ is not virtually reducible.
Even if a priori $H$ can be strictly contained in $J$,
its derived groups $H^{(1)}$ and $J^{(1)}$ coincide.

Since $\Lambda ({H})$ is non-discrete, the group
$\overline{J}$ has elements arbitrarily close to $Id$ that
do not belong to ${\mathbb C}^{*} Id$.
We deduce that $\overline{J}_{0}$ is not contained in ${\mathbb C}^{*} Id$.
There are elements $C,D \in J \cap U $ that are free generators of a free
group on two elements by Theorem \ref{teo:ccind}. We can even suppose
$C,D,C^{-1},D^{-1} \in W$ where $W$ is the neighbourhood of $Id$ in $\mathrm{GL}(n,{\mathbb C})$
provided by Remark \ref{rem:lie}.
We define ${\mathcal S} =\{C,D,C^{-1},D^{-1}\}$. The set
${\mathcal S}_{0} (k)$ is contained in $\{ E \in \mathrm{GL}(n,{\mathbb C}): ||E-Id|| < \epsilon / 2^{2^{k}-1} \}$
(cf. Remark \ref{rem:lie}) for any $k \geq 0$. By considering $k > 0$ big enough
we obtain free groups on two elements whose free generators are contained in
$U \cap {\mathcal S}_{0} (2k) \cap J$
(Lemma \ref{lem:free}).  Since ${\mathcal S}_{0} (2k) \subset J^{(1)} = H^{(1)} \subset H$ we are done.
\end{proof}
We have already all the ingredients to prove the main theorem.
\begin{proof}[Proof of Theorem \ref{teo:rec}]
The result is a consequence of Proposition \ref{pro:recuru} in Case (1).
Hence we can always suppose that $G^{1}:=G \cap \mathrm{Diff}_{1} ({\mathbb C}^{n},0)$
is solvable.

Let us show that in the remaining cases, given $\epsilon >0$ there exist
$A,B \in j^{1} G$ such that $||A-Id||_{1} < \epsilon$, $||B-Id||_{1} < \epsilon$ and
$A,B$ are free generators of a free group on two elements.

Let us consider Case (2).
It suffices to show that $j^{1} G$ is non-virtually solvable by
Theorem \ref{teo:nonhyp}. Suppose $j^{1} G$ is virtually solvable.
The natural morphism $G/G^{1} \to j^{1} G$
is an isomorphism of groups.  Hence $G/G^{1}$ is virtually solvable and there
exists a finite index normal solvable subgroup $H/G^{1}$ of $G/G^{1}$.
Since $(G/G^{1})/(H/G^{1})$ is isomorphic to $G/H$, we deduce that
$H$ is a finite index normal subgroup of $G$.
The solvability of $H/G^{1}$ and $G^{1}$ implies that $H$ is solvable.
Therefore $G$ is virtually solvable and we obtain a contradiction.

The result for Cases (3), (4) and (5) is a consequence of
of applying Theorems \ref{teo:nsolcc}, \ref{teo:ccind} and \ref{teo:nvrnd}
respectively to $j^{1} G$.

Consider $\delta >0$ with $\delta < 1/8$.
There exist $\phi, \eta \in G$ such that
$||D_{0} \phi^{\pm 1} - Id||_{1} < \delta/8$, $||D_{0} \eta^{\pm 1} - Id||_{1} < \delta/8$
and $D_{0} \phi$, $D_{0} \eta$ are free generators of a free
group on two elements by the previous discussion.
In particular $\phi$ and $\eta$ are free generators of a free group
on two elements.
We denote ${\mathcal S} = \{\phi, \phi^{-1}, \eta, \eta^{-1} \}$.

The group $\langle \phi, \eta \rangle$ is not $0$-pseudo-solvable for
${\mathcal S}$ by Lemma \ref{lem:free}.
The proof is completed by applying Proposition \ref{pro:estl}.
\end{proof}
\begin{rem}
Conditions (2), (4) and (5) of
Theorem \ref{teo:rec} imply condition (3).
This is a consequence of Theorems \ref{teo:nonhyp}, \ref{teo:ccind} and \ref{teo:nvrnd}
for Cases (2), (4) and (5) respectively.
\end{rem}
\subsection{Groups with hyperbolic elements}
\label{sec:gwhe}
We show Theorem \ref{teo:lcoivs} in this section.
It is natural to try to restrict our study to subgroups $G$ of $\diff{}{n}$
such that $j^{1} G$ is as simple as possible. For instance it is
useful to consider linear groups $j^{1} G$ that share Zariski-closure
with all its finite index normal subgroups. We will obtain such reduction
by using the following proposition.
\begin{pro}
\label{pro:irr}
Let $H$ be a non-virtually solvable subgroup of $\mathrm{GL}(n,{\mathbb C})$.
There exists a subgroup $J$ of $H$ such that
\begin{itemize}
\item $J$ is non-virtually solvable.
\item $\overline{L}^{z} = \overline{J}^{z}$ for any non-virtually solvable subgroup $L$ of $J$.
\item $\overline{J}^{z}$ is a Zariski-connected irreducible algebraic group.
\item $J$ is the derived group of a subgroup $K$ of $H$ satisfying
the first two properties (and then $\overline{K}^{z} = \overline{J}^{z}$).
\end{itemize}
\end{pro}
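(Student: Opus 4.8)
The plan is to produce the desired subgroup by a minimal-dimension argument applied to the Zariski closures of the non-virtually solvable subgroups of $H$, and then to pass to a derived group to secure the last property. I would work throughout with the ordinary Zariski closure $\overline{L}^{z}$ in $\mathrm{GL}(n,{\mathbb C})$ and with the identity component $(\overline{L}^{z})^{0}$ of this (finite type) algebraic group.

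First I would consider the set of integers $\dim \overline{L}^{z}$, where $L$ ranges over the non-virtually solvable subgroups of $H$. This set is a nonempty subset of $\{0,1,\ldots,n^{2}\}$ (nonempty because $H$ itself is non-virtually solvable), so it has a minimum $d$; note $d\geq 1$, since a zero-dimensional algebraic group is finite and a finite group is virtually solvable. Choose $L_{0}\leq H$ non-virtually solvable with $\dim \overline{L_{0}}^{z}=d$. To arrange that the Zariski closure is connected I would replace $L_{0}$ by $K:=L_{0}\cap (\overline{L_{0}}^{z})^{0}$. Since $(\overline{L_{0}}^{z})^{0}$ has finite index, $K$ has finite index in $L_{0}$ and is therefore still non-virtually solvable; writing $L_{0}$ as a finite union of cosets of $K$ and taking closures gives $\overline{L_{0}}^{z}=\bigcup_{i} g_{i}\overline{K}^{z}$, so $\overline{K}^{z}$ has finite index in $\overline{L_{0}}^{z}$ and hence $(\overline{K}^{z})^{0}=(\overline{L_{0}}^{z})^{0}$. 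As $K\subseteq (\overline{L_{0}}^{z})^{0}$ we conclude $\overline{K}^{z}=(\overline{L_{0}}^{z})^{0}$, a connected group of the minimal dimension $d$.

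Next I would verify that $K$ already satisfies the first two properties. The first holds by construction. For the second, let $L\leq K$ be non-virtually solvable. Then $\overline{L}^{z}\subseteq \overline{K}^{z}$, and by minimality $d\leq \dim \overline{L}^{z}\leq \dim \overline{K}^{z}=d$, so all these dimensions equal $d$. The identity component $(\overline{L}^{z})^{0}$ is then a closed irreducible subvariety of the irreducible variety $\overline{K}^{z}$ of the same dimension $d$, whence $(\overline{L}^{z})^{0}=\overline{K}^{z}$; since $\overline{K}^{z}$ is connected this forces $\overline{L}^{z}=\overline{K}^{z}$. Thus every non-virtually solvable subgroup of $K$ has Zariski closure $\overline{K}^{z}$, and $\overline{K}^{z}$ is a Zariski-connected (hence irreducible as an algebraic variety) algebraic group.

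Finally I would set $J:=K^{(1)}=[K,K]$. The key point, and the main technical input, is that $J$ is again non-virtually solvable. Here I would use the standard fact that for the connected group $G:=\overline{K}^{z}$ the derived group $[G,G]$ is closed and connected and equals the image of a product-of-commutators morphism $G^{2m}\to G$ for $m$ large; since $K$ is Zariski dense in $G$ and this morphism carries $K^{2m}$ into $[K,K]$, a density argument yields $\overline{[K,K]}^{z}=[G,G]$. Because $G$ is connected and non-solvable, $[G,G]$ is non-solvable, so $J$ is non-virtually solvable. Applying the second property of $K$ to $L=J$ then gives $\overline{J}^{z}=\overline{K}^{z}$, which is connected; and since any non-virtually solvable subgroup of $J$ is in particular a non-virtually solvable subgroup of $K$, the second property transfers verbatim to $J$. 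Hence $J$ is non-virtually solvable, shares its connected Zariski closure $\overline{J}^{z}=\overline{K}^{z}$ with all its non-virtually solvable subgroups, and is by construction the derived group of $K$, which satisfies the first two properties. I expect the commutator-density step $\overline{[K,K]}^{z}=[\overline{K}^{z},\overline{K}^{z}]$ to be the main obstacle, as it is exactly what guarantees that passing to the derived group does not destroy non-virtual solvability; by contrast the minimal-dimension reduction is elementary once connectedness has been arranged.
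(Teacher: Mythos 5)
Your proof is correct, and while its first half runs parallel to the paper's, the final step takes a genuinely different route. For the existence of $K$, the paper argues by contradiction with a descending chain of Zariski closures, using the lexicographic invariant (dimension, number of connected components) to show the chain stabilizes, and only afterwards deduces connectedness of $\overline{K}^{z}$ by applying the minimality property to $K \cap (\overline{K}^{z})_{0}$; you instead minimize dimension alone and force connectedness upfront by intersecting with the identity component, after which the second property follows from the dimension count on irreducible varieties. These are minor variants of the same idea. The substantive divergence is in proving that $J = K^{(1)}$ is non-virtually solvable. The paper invokes the Tits alternative: $K$ contains a non-abelian free subgroup, hence so does its derived group, and a group containing a non-abelian free group cannot be virtually solvable; then $\overline{J}^{z} = \overline{K}^{z}$ comes for free from the second property of $K$. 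You instead use the structure theory of connected algebraic groups: $[G,G]$ is closed and connected with bounded commutator width for $G = \overline{K}^{z}$ connected, a density argument gives $\overline{J}^{z} = [G,G]$, and non-solvability of $G$ forces non-solvability of $[G,G]$. The paper's route is shorter and leans on a theorem already central to the paper; yours avoids Tits entirely, stays within algebraic-group theory, and shows in passing that $\overline{K}^{z}$ is perfect. One step you leave implicit deserves a line: from ``$\overline{J}^{z}$ is connected and non-solvable'' to ``$J$ is non-virtually solvable'' one must note that a solvable finite-index subgroup $J_{1}$ of $J$ would have solvable Zariski closure of finite index in $\overline{J}^{z}$, which, being closed in a connected group, would equal $\overline{J}^{z}$ --- a contradiction. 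This is standard and all the ingredients are in place in your argument, so it is a matter of exposition rather than a gap.
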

The proposition is based in a simple idea:  among the Zariski-closures of non-virtually solvable subgroups
of a  non-virtually solvable subgroup $H$ of $\mathrm{GL}(n,{\mathbb C})$ there
are always minimal subgroups.
\begin{proof}
Let us show first that there exists a group $K$ satisfying the first two properties by contradiction.
Otherwise there exists an infinite  decreasing sequence
$H= H_{0} \supset H_{1} \supset H_{2} \supset \ldots$ of non-virtually solvable subgroups of $H$ such that
\begin{equation}
\label{equ:gl}
\overline{H_{0}}^{z}  \supsetneq \overline{H_{1}}^{z} \supsetneq \overline{H_{2}}^{z}  \supsetneq \ldots
\end{equation}
Given $j \geq 0$ we have either $\dim \overline{H_{j+1}}^{z} < \overline{H_{j}}^{z}$ or
$\overline{H_{j+1}}^{z}$ has fewer connected components than $\overline{H_{j}}^{z}$.
Since the number of connected components of an algebraic group is finite, we deduce that the sequence
(\ref{equ:gl}) does not exist. We obtain a contradiction.

We claim that $\overline{K}^{z}$ is connected. We define
$K' = (\overline{K}^{z})_{0} \cap K$. The group $K'$ is
a finite index normal subgroup of $K$ and hence
$K'$ is not virtually solvable. Moreover we have
$\overline{K'}^{z} = (\overline{K}^{z})_{0}$.
Since $\overline{K'}^{z} = \overline{K}^{z}$
by construction of $K$, we deduce $\overline{K}^{z}= (\overline{K}^{z})_{0}$.
Notice that $\overline{K}^{z}$ is a (connected) smooth algebraic set
since $\overline{K}^{z}$ is an algebraic group.
Therefore $\overline{K}^{z}$ is an irreducible algebraic set.

The Tits alternative implies that $K$ contains a free group on two elements
and then $J:=K^{(1)}$ contains non-abelian free groups. In particular $J$ is non-virtually solvable.
We obtain $\overline{J}^{z} = \overline{K}^{z}$ by the second property.
Given a non-virtually solvable subgroup $L$ of $J$ we have $\overline{J}^{z} = \overline{L}^{z}$,
otherwise $K$ does not satisfy the second condition.
\end{proof}
The groups $K$ and $J$ provided by Proposition \ref{pro:irr}
share Zariski-closure with all their finite index subgroups.
Such property is interesting, since for instance the set of $J$-invariant
vector subspaces of ${\mathbb C}^{n}$ depends only on $\overline{J}^{z}$.
\begin{lem}
\label{lem:irrz}
Let $H$ be a subgroup of $\mathrm{GL}(n,{\mathbb C})$ and $V$ a vector subspace of ${\mathbb C}^{n}$.
Then $V$ is $H$-invariant if and only if $V$ is $\overline{H}^{z}$-invariant.
\end{lem}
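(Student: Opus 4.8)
The plan is to deduce the statement from the elementary fact that the set of linear automorphisms preserving a fixed subspace is Zariski closed in $\mathrm{GL}(n,{\mathbb C})$. One implication is immediate: since $H \subseteq \overline{H}^{z}$, every $\overline{H}^{z}$-invariant subspace is automatically $H$-invariant. Hence the real content is the converse, and I would organize the whole argument around a single auxiliary set.

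First I would introduce the stabilizer
\[ W = \{ g \in \mathrm{GL}(n,{\mathbb C}) : g(V) \subseteq V \}. \]
Because $g$ is invertible and $\dim g(V) = \dim V$, the inclusion $g(V) \subseteq V$ is in fact equivalent to $g(V) = V$, so $W$ is precisely the set of $g$ for which $V$ is $g$-invariant. The key step is to verify that $W$ is Zariski closed. To do this I would fix a basis $v_{1}, \ldots, v_{k}$ of $V$, complete it to a basis $v_{1}, \ldots, v_{n}$ of ${\mathbb C}^{n}$, and express each $g(v_{i})$ in these coordinates. The requirement $g(v_{i}) \in V$ for $1 \leq i \leq k$ is exactly the vanishing of the coordinates of $g(v_{i})$ along $v_{k+1}, \ldots, v_{n}$, and these coordinates are linear, hence polynomial, functions of the entries of $g$. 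Therefore $W$ is cut out inside $\mathrm{GL}(n,{\mathbb C})$ by polynomial equations and is Zariski closed.

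With this in hand the conclusion follows at once. If $V$ is $H$-invariant then $H \subseteq W$; since $W$ is Zariski closed it contains the Zariski closure of $H$, so $\overline{H}^{z} \subseteq W$. Consequently every element of $\overline{H}^{z}$ maps $V$ into itself, i.e. $V$ is $\overline{H}^{z}$-invariant. Combined with the trivial converse noted above, this yields the desired equivalence. There is no serious obstacle in this argument; the only point requiring any care is the verification that $W$ is Zariski closed, and even that reduces to routine linear algebra once a basis adapted to $V$ is chosen.
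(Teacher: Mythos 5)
Your proof is correct and follows essentially the same route as the paper: both arguments hinge on the observation that the stabilizer $\{A \in \mathrm{GL}(n,{\mathbb C}) : A(V)=V\}$ is Zariski closed (the paper calls it an algebraic group $L$ and notes it contains $H$, hence $\overline{H}^{z}$), together with the trivial converse from $H \subseteq \overline{H}^{z}$. Your write-up merely fills in the routine verification, via a basis adapted to $V$, that the stabilizer is cut out by polynomial (indeed linear) equations, a detail the paper leaves implicit.
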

\begin{proof}
The necessary condition is obvious. Let $V$ be a $H$-invariant subspace.
The set $L:=\{ A \in \mathrm{GL}(n,{\mathbb C}): A(V) = V \}$ is an algebraic group containing $H$.
Hence $L$ contains $\overline{H}^{z}$.
\end{proof}
Let $G$ be a subgroup of $\diff{}{n}$ such that $j^{1} G$ is non-virtually solvable.
In order to show that $G$ does not satisfy the discrete orbits property,
we can suppose that there are elements in $j^{1} G$ whose spectrum is not contained
in ${\mathbb S}^{1}$ by Theorem \ref{teo:rec}. These elements do have either stable or unstable manifolds.
Indeed we will obtain  recurrent point in stable or unstable manifolds of certain hyperbolic
diffeomorphisms.

Let $A \in \mathrm{GL}(n,{\mathbb C})$. The subspaces
\[ V_{A}^{s} = \bigoplus_{\lambda \in \mathrm{spec} (A) \cap {\mathbb B}_{1}^{1}} \mathrm{ker} (A - \lambda Id)^{n},
\ V_{A}^{cu} = \bigoplus_{\lambda \in \mathrm{spec} (A) \setminus {\mathbb B}_{1}^{1}} \mathrm{ker} (A - \lambda Id)^{n} \]
are the stable and the center-unstable manifolds of $A$ respectively.
Consider $\phi \in \mathrm{Diff} ({\mathbb C}^{n},0)$ such that
$D_{0} \phi = A$. The stable manifold theorem (cf. \cite{Ruelle}[p. 27])
implies that there exists a manifold
$W_{\phi}^{s}$ containing the origin such that $\phi (W_{\phi}^{s}) \subset W_{\phi}^{s}$
and $T_{0} W_{\phi}^{s} = V_{A}^{s}$. Moreover $\phi_{|W_{\phi}^{s}}^{p}$ tends uniformly to
$0$ in $W_{\phi}^{s}$ when $p \to \infty$ and
the germ of $W_{\phi}^{s}$ at the origin is
uniquely determined. Analogously we define the unstable manifold $W_{\phi}^{u}$, it is equal to
$W_{\phi^{-1}}^{s}$.
\begin{defi}
Let $\pi : {\mathbb C}^{n} \setminus \{0\} \to  {\bf CP}^{n-1}$ be the map associating to each vector
its class in the projective space. Given a vector subspace $V$ of ${\mathbb C}^{n}$ we
denote $[V] = \pi (V \setminus \{0\})$.
\end{defi}
Next we show a irreducibility type property for actions on stable manifolds.
\begin{pro}
\label{pro:exit2}
Let $H$ be a non-virtually solvable subgroup of $\mathrm{GL}(n,{\mathbb C})$.
Let $K$ and $J$ be the subgroups provided by Proposition \ref{pro:irr}.
Consider $A \in J$. There is no a non-trivial $J$-invariant vector subspace
of $V_{A}^{s}$. In particular
given any $[v] \in  [V_{A}^{s}]$, there exists
$B \in J$ such that $[B v] \not \in [V_{A}^{s}]  \cup [V_{A}^{cu}]$.
\end{pro}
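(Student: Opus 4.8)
The plan is to prove the two assertions separately, exploiting two different facets of Proposition \ref{pro:irr}: the minimality of the Zariski-closure (its second property) for the main claim, and the irreducibility of $\overline{J}^{z}$ as an algebraic variety (its third property) for the ``in particular'' part.

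First I would show that $V_{A}^{s}$ contains no non-zero $J$-invariant subspace. Suppose on the contrary that $W$ is a $J$-invariant subspace with $\{0\} \neq W \subseteq V_{A}^{s}$. By Lemma \ref{lem:irrz} the subspace $W$ is also $\overline{J}^{z}$-invariant, so $\chi(g) = \det(g_{|W})$ defines an algebraic character $\chi : \overline{J}^{z} \to {\mathbb C}^{*}$. Since $A \in J$ preserves $W$ and $W \subseteq V_{A}^{s}$, every eigenvalue of $A_{|W}$ lies in the open unit disc, whence $|\chi(A)| < 1$; in particular $\chi$ is non-trivial. Setting $N := J \cap \ker \chi$, the group $J/N$ embeds in the abelian group ${\mathbb C}^{*}$, so $J^{(1)} \subseteq N$.

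The key point is then that $J^{(1)}$ is non-virtually solvable. Indeed $J$ contains a non-abelian free group $F$ by Proposition \ref{pro:irr}, and $[F,F]$ is again a non-abelian free group contained in $J^{(1)}$. Hence $N$ is non-virtually solvable, so by the minimality property of $J$ we get $\overline{N}^{z} = \overline{J}^{z}$. On the other hand $N \subseteq \ker \chi$ and $\ker \chi$ is Zariski-closed, so $\overline{J}^{z} = \overline{N}^{z} \subseteq \ker \chi$; this forces $\chi \equiv 1$ on $\overline{J}^{z} \ni A$, contradicting $|\chi(A)| < 1$. I expect this determinant-character trick — turning a would-be invariant subspace into a non-trivial character that is then annihilated by the minimality of the Zariski-closure — to be the crux of the whole argument.

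For the ``in particular'' part, fix $v \in V_{A}^{s} \setminus \{0\}$ and set $Z_{s} = \{ g \in \overline{J}^{z} : g v \in V_{A}^{s} \}$ and $Z_{cu} = \{ g \in \overline{J}^{z} : g v \in V_{A}^{cu} \}$; both are Zariski-closed, since membership in a fixed subspace is a polynomial condition on $gv$. Arguing by contradiction, if no $B \in J$ satisfied $[Bv] \notin [V_{A}^{s}] \cup [V_{A}^{cu}]$, then $J \subseteq Z_{s} \cup Z_{cu}$ and hence $\overline{J}^{z} \subseteq Z_{s} \cup Z_{cu}$. As $\overline{J}^{z}$ is irreducible as an algebraic variety, it is contained in $Z_{s}$ or in $Z_{cu}$. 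In the first case $\mathrm{span}(J v)$ is a non-zero $J$-invariant subspace of $V_{A}^{s}$, contradicting the assertion just proved; in the second case, taking $g = Id$ yields $v \in V_{A}^{cu}$, impossible because $v \in V_{A}^{s} \setminus \{0\}$ and $V_{A}^{s} \cap V_{A}^{cu} = \{0\}$. Either way we reach a contradiction, completing the proof.
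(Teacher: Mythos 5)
Your proof is correct. For the ``in particular'' part your argument coincides with the paper's: the same two Zariski-closed sets (membership of $Bv$ in $[V_{A}^{s}]$ resp.\ $[V_{A}^{cu}]$), the same appeal to the irreducibility of $\overline{J}^{z}$ to reduce to one of the two cases, with $Id$ ruling out the center-unstable case and the $J$-invariant span of $Jv$ ruling out the stable one. For the main claim, however, you take a genuinely different route. The paper uses the fourth property of Proposition \ref{pro:irr}: since $\overline{J}^{z}=\overline{K}^{z}$, Lemma \ref{lem:irrz} makes the invariant subspace $K$-invariant as well, and then $J=K^{(1)}$ forces $\det(B_{|V})=1$ for every $B\in J$ outright, because commutators of operators preserving $V$ have determinant $1$ on $V$; this contradicts $|\det(A_{|V})|<1$. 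You never use $K$ or the identity $J=K^{(1)}$: you convert the invariant subspace $W$ into the regular character $\chi=\det(\,\cdot\,_{|W})$ on $\overline{J}^{z}$, note that $N=J\cap\ker\chi$ contains $J^{(1)}$, which is non-virtually solvable (Tits alternative plus the fact that $[F,F]$ is again non-abelian free), and then invoke the minimality property (second item of Proposition \ref{pro:irr}) together with the Zariski-closedness of $\ker\chi$ to force $\chi\equiv 1$ on $\overline{J}^{z}$, contradicting $|\chi(A)|<1$. Both proofs hinge on the same numerical obstruction ($A$ contracts on the invariant subspace, so no determinant-one conclusion can hold); the paper's is shorter given Proposition \ref{pro:irr} in full, while yours shows the statement already follows from the first three properties of that proposition alone, at the cost of one extra appeal to the Tits alternative --- which is, incidentally, exactly how the paper establishes the fourth property in the first place.
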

\begin{proof}
We denote $V^{s}=V_{A}^{s}$ and $V^{cu}=V_{A}^{cu}$.
Let $0 \neq V \subset V^{s}$ be a $J$-invariant vector space.
Proposition \ref{pro:irr} implies $\overline{J}^{z} = \overline{K}^{z}$.
Since $V$ is $J$-invariant, it is also $K$-invariant by
Lemma \ref{lem:irrz}.
The property $J = K^{(1)}$
implies $\det (B_{|V})=1$ for any $B \in J$.
Since $V \subset V^{s}$ this contradicts
$\mathrm{spec}{(A_{|V})}  \subset \mathrm{spec}{(A_{|V^{s}})} \subset \{ z \in {\mathbb C} : |z| < 1 \}$.
Thus any $J$-invariant vector subspace of $V^{s}$ is trivial.

Let
\[ J_{s} = \{ B \in J:  [B v] \in [V^{s}]  \} \ \mathrm{and} \
 J_{cu} = \{ B \in J:  [B v] \in [V^{cu}]  \}  . \]
The sets $J_{s}$ and $J_{cu}$ are Zariski-closed in $J$.
We claim $J \not \subset J_{s} \cup J_{cu}$. It suffices to show
$J \not \subset J_{s}$ and $J \not \subset J_{cu}$ since
$\overline{J}^{z}$ is an irreducible algebraic set. The latter property is obvious
since $Id \not \in J_{cu}$.

Let us show $J \not \subset J_{s}$ by contradiction.
Otherwise $B v$ belongs to  $V^{s}$  for any $B \in J$.
Therefore the linear subspace $V$ generated by
$\{ Bv : B \in J\}$ is contained in $V^{s}$.
Moreover by construction $V$ is $J$-invariant, contradicting
the first part of the proof.
\end{proof}
\begin{proof}[Proof of Theorem \ref{teo:lcoivs}]
Suppose that $G$ is not virtually solvable. We have
\[ 0 \to G \cap \mathrm{Diff}_{1} ({\mathbb C}^{n},0) \to G \to j^{1} G \to 0 . \]
Suppose $G \cap \mathrm{Diff}_{1} ({\mathbb C}^{n},0)$ is non-solvable.
Then $G$ does not have the discrete orbits property since
$G \cap \mathrm{Diff}_{1} ({\mathbb C}^{n},0)$ has (lots of) recurrent points
in any neighborhood of the origin by Theorem \ref{teo:rec} (1).
Suppose from now on that
$G \cap \mathrm{Diff}_{1} ({\mathbb C}^{n},0)$ is solvable.
Since $G$ is non-virtually solvable, the group
$H:=j^{1} G$  is non-virtually solvable.

Consider the groups $J$ and $K$ provided by Proposition \ref{pro:irr}.
We define $\tilde{G} = \{ \phi \in G : j^{1} \phi \in J \}$.
Suppose that $\mathrm{spec}(A) \subset {\mathbb S}^{1}$ for any $A \in J$.
Since $\tilde{G}$ is non-virtually solvable, $G$ does not hold the
discrete orbits property by Theorem \ref{teo:rec} (2).
We can suppose that there exists $A \in J$ such that $\mathrm{spec}(A) \cap {\mathbb B}_{1}^{1} \neq \emptyset$.
Consider an element $\phi \in \tilde{G}$ such that $j^{1} \phi =A$.
We obtain $\dim W_{\phi}^{s} \geq 1$.
Let $D^{s}$ be a closed fundamental domain of $\phi$ restricted to $W_{\phi}^{s} \setminus \{0\}$.
We can extend $D^{s}$ to a fundamental domain $M^{s}$ for $\phi$ restricted to a neighborhood of
$D^{s}$.

Let $q \in D^{s}$. The sequence $(\phi^{p}(q))_{p \geq 1}$ tends to $0$ when $p \to \infty$.
Up to consider a subsequence $(p_{k})_{k \geq 1}$ we can suppose that
$([\phi^{p_{k}}(q)])_{k \geq 1}$ converges to a direction $\ell$ in
$[V_{A}^{s}]$ when $k \to \infty$. There exists $B \in J$ such that
$[B] (\ell) \not \in [V_{A}^{s}] \cup [V_{A}^{cu}]$
by Proposition \ref{pro:exit2}. Consider $\psi \in \tilde{G}$ such that
$j^{1} \psi = B$.
We deduce that $\lim_{k \to \infty} [(\psi \circ \phi^{p_{k}})(q)] = [B] (\ell)$.
Since $[B] (\ell) \not \in [V_{\phi}^{cu}]$, all accumulation points of the sequence
$([A^{-p}] ([B] \ell))_{p \geq 1}$ belong to $[V_{\phi}^{s}]$.
We deduce that there exists a sequence $(m_{k})_{k \geq 1}$ of natural numbers such
that $\lim_{k \to \infty} (\phi^{-m_{k}} \circ \psi \circ \phi^{p_{k}})(q) \in D^{s}$.
Moreover $(\psi \circ \phi^{p_{k}})(q)$ does not belong to $W_{\phi}^{s}$ for $k>>1$
since $[B] (\ell) \not \in [V_{\phi}^{s}]$.
Since $W_{\phi}^{s}$ is $\phi$-invariant, the point
$(\phi^{-m_{k}} \circ \psi \circ \phi^{p_{k}})(q)$ does not belong to
$W_{\phi}^{s}$ for any $k >>1$. In particular
$\lim_{k \to \infty} (\phi^{-m_{k}} \circ \psi \circ \phi^{p_{k}})(q)$
contains infinitely many points of the $G$-orbit of $q$ in every  of its neighborhoods.
Next lemma applied to $F=D^{s}$ implies that there exists a recurrent point for the action of the
pseudogroup ${\mathcal P}$ induced by $G$ where we suppose that $\phi$ is defined in
a neighborhood of $W_{\phi}^{s}$ and we can consider any domain of definition for
the elements of $G \setminus \langle \phi \rangle$.
\end{proof}
Given a set $S$ we denote by $\wp(S)$ the power set of $S$.
Let ${\mathcal P}$ be a pseudogroup of homeomorphisms defined in a topological space $M$.
Consider a closed subset $F$ of $M$. We consider the map
$\tau: F \to \wp(F)$ where $\tau (q)$ is defined as the set of points
$q' \in F$ such that $q'$ contains infinitely many points of the ${\mathcal P}$-orbit
of $q$ for any neighborhood of $q'$. The map extends naturally to a
map $\tau: \wp (F) \to \wp(F)$ by defining $\tau (S) = \cup_{q \in S} \tau (q)$ for any $S \in \wp (F)$.
\begin{lem}
\label{lem:funct}
Let ${\mathcal P}$ be a pseudogroup of homeomorphisms defined in a Hausdorff topological space $M$.
Consider a compact subset $F$ of $M$. Suppose that $\tau (q) \neq \emptyset$ for any
$q \in  F$. Then there exists a ${\mathcal P}$-recurrent point $q' \in F$ or in other
words $q' \in \tau (q')$.
\end{lem}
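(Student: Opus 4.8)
The plan is to run a Birkhoff-style minimal-set argument for the set-valued map $\tau$, extracting the recurrent point from a minimal nonempty closed $\tau$-invariant subset of $F$, where I call a set $C \subseteq F$ \emph{$\tau$-invariant} if $\tau(q) \subseteq C$ for every $q \in C$.

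The key input, and the step I expect to be the main obstacle, is a transitivity (idempotency) property of $\tau$: if $q' \in \tau(q)$ and $q'' \in \tau(q')$ then $q'' \in \tau(q)$; equivalently $\tau(\tau(q)) \subseteq \tau(q)$. To prove it I would fix an open neighborhood $V$ of $q''$; since $q'' \in \tau(q')$ there is $g \in {\mathcal P}$ with $g(q') \in V$, so $q'$ lies in the open set $\mathrm{dom}(g)$. By continuity of $g$ there is an open neighborhood $U \subseteq \mathrm{dom}(g)$ of $q'$ with $g(U) \subseteq V$. Now $q' \in \tau(q)$ provides infinitely many distinct points $f_n(q) \in U$ of ${\mathcal O}(q)$ with $f_n \in {\mathcal P}$; since $U \subseteq \mathrm{dom}(g)$, each $g \circ f_n$ belongs to ${\mathcal P}$ (closure under composition and restriction) and $g(f_n(q)) \in V \cap {\mathcal O}(q)$, these images being pairwise distinct because $g$ is injective. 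Thus $V$ meets ${\mathcal O}(q)$ in infinitely many points; as $V$ was arbitrary and $q'' \in F$, this yields $q'' \in \tau(q)$. This is the only place where the pseudogroup structure and the continuity of its elements genuinely enter.

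Next I would record that each $\tau(q)$ is closed. Since $M$ is Hausdorff, hence $T_1$, the set of $\omega$-accumulation points of ${\mathcal O}(q)$ — the points every neighborhood of which meets ${\mathcal O}(q)$ in infinitely many points — is closed in $M$: if $x$ is not such a point, a neighborhood meeting ${\mathcal O}(q)$ in finitely many points can be shrunk, using $T_1$, to a neighborhood whose intersection with ${\mathcal O}(q)$ is contained in $\{x\}$, and this smaller neighborhood witnesses that none of its points is an $\omega$-accumulation point. Intersecting with the closed set $F$ shows $\tau(q)$ is closed, hence compact.

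Finally I would apply Zorn's lemma, in the form yielding a minimal element, to the family ${\mathcal C}$ of nonempty closed $\tau$-invariant subsets of $F$ ordered by inclusion. The family is nonempty since $F \in {\mathcal C}$ (because $\tau(q) \subseteq F$ for every $q$). Given a chain in ${\mathcal C}$, its intersection is closed and $\tau$-invariant, and it is nonempty because the members are closed subsets of the compact set $F$ enjoying the finite intersection property; hence every chain has a lower bound. Let $C_0$ be a minimal element. For any $q \in C_0$ the set $\tau(q)$ is nonempty by hypothesis, closed by the previous paragraph, and $\tau$-invariant by the transitivity property, so $\tau(q) \in {\mathcal C}$; since $\tau(q) \subseteq C_0$ by invariance of $C_0$, minimality forces $\tau(q) = C_0$. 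Therefore $q \in C_0 = \tau(q)$, so every point of $C_0$ — in particular some $q' \in F$ — is ${\mathcal P}$-recurrent, as required.
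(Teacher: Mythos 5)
Your proposal is correct and is essentially the paper's own proof: the same Zorn's lemma argument applied to the family of non-empty closed $\tau$-invariant subsets of $F$ (ordered so that chains have intersections as bounds, non-empty by compactness), with minimality forcing $\tau(q) = C_{0}$ and hence $q \in \tau(q)$ for every $q$ in a minimal element $C_{0}$. The only difference is that you spell out the verifications of the two facts the paper merely lists as ``simple properties'' --- that $\tau(q)$ is closed and that $\tau(\tau(q)) \subseteq \tau(q)$, the latter being where the pseudogroup structure (composition, restriction, injectivity) enters --- which is a welcome but not substantively different addition.
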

\begin{proof}
Let us enumerate some simple properties of $\tau$:
\begin{itemize}
\item $\tau (q)$ is a non-empty closed subset of $F$ for any $q \in F$.
\item $\tau (\tau (q)) \subset \tau (q)$ for any $q \in F$.
\end{itemize}
Let $S$ be the subset of $\wp (F)$ consisting of non-empty closed subsets $T$ of $F$
such that $\tau (T) \subset T$. Consider the order $(S, \supset)$ defined by the reverse inclusion.
Given a chain $C$ in $S$ the set $\cap_{T \in C} T$ is a closed subset
such that $T \supset \cap_{T' \in C} T'$ for any $T \in C$.
It is non-empty since the elements of the chain are compact.
We have $\tau (\cap_{T \in C} T) \subset \cap_{T \in C} \tau (T) \subset \cap_{T \in C} T$ and
then $\cap_{T \in C} T$ is an upper bound of the chain $C$.
Zorn's lemma implies that there exists a maximal element $T_{0}$ in $S$.
Notice that
$\tau (q)$ for $q \in T_{0}$ is a non-empty closed
subset of $T_{0}$ that belongs to $S$
by the second property above. Since $T_{0}$ is minimal for the inclusion,
we obtain $\tau (q) = T_{0}$ for any $q \in T_{0}$.
In particular $q$ belongs to $\tau (q)$ for any $q \in T_{0}$.
\end{proof}
The proof of Theorem \ref{teo:lcoivs} splits in two parts, namely
the cases that are consequence of Theorem \ref{teo:rec} and the
``hyperbolic" case. The proof in the later case has similarities with the
proof in dimension $2$ by Rebelo and Reis
\cite{RR:arxiv}; for instance Lemma \ref{lem:funct} is inspired in
one of their constructions.

\bibliography{rendu}
\end{document}